\newtheorem{theorem}{Theorem}[section]
\newtheorem{lemma}[theorem]{Lemma}
\newtheorem{proposition}[theorem]{Proposition}
\newtheorem{definition}[theorem]{Definition}
\newtheorem{remark}[theorem]{Remark}
\newtheorem{corollary}[theorem]{Corollary}
\newcommand{\be}{\begin{equation}}
\newcommand{\ee}{\end{equation}}
\newcommand{\ba}{\begin{aligned}}
	\newcommand{\ea}{\end{aligned}}
\newcommand{\R}{\mathbb R}
\newcommand{\N}{\mathbb N}
\newcommand{\bC}{\mathbb C}
\newcommand{\Div}{\mathrm{div}\;}
\newcommand{\Om}{\Omega}
\newcommand{\p}{\partial}
\newcommand{\eps}{\varepsilon}
\newcommand{\nueff}{\nu_{\mathrm{eff}}}
\newcommand{\om}{\omega}
\newcommand{\sgn}{\mathrm{sgn}\;}
\newcommand{\dd}{\mathrm{d}}
\newcommand{\hS}{\widehat{S}}
\newcommand{\cG}{\widehat{G}}
\newcommand{\cF}{\mathcal F}
\newcommand{\uapp}{u^{\mathrm{app}}}
\newcommand{\divh}{\mathrm{div}_h\,}
\newcommand{\tu}{\widetilde{u}}
\numberwithin{equation}{section}
\title{A linear model of separation for western boundary currents with bathymetry and stratification}
\author{Anne-Laure Dalibard\footnote{
Sorbonne Université, Université Paris Cité,
CNRS, INRIA;
Laboratoire Jacques-Louis Lions, LJLL, EPC ANGE;
75005 Paris, France;
Email: anne-laure.dalibard@sorbonne-universite.fr}  \  and Corentin Gentil\footnote{DMA, École normale supérieure, Université PSL, CNRS, 75005 Paris, France}}
\begin{document}
\bibliographystyle{plain}

\maketitle

\begin{abstract}
    This paper is devoted to the asymptotic analysis of strongly rotating and stratified fluids, under a $\beta$-plane approximation, and within a three-dimensional spatial domain with strong topography.
Our purpose is to propose a linear idealized model, which is able to capture one of the key features of western boundary currents, in spite of its simplicity: the separation of the currents from the coast.
Our simplified framework allows us to perform  explicit computations, and to highlight the intricate links between rotation, stratification and bathymetry.
In fact, we are able to construct approximate solutions at any order for our system, and to justify their validity.
Each term in the asymptotic expansion is the sum of an interior part and of two boundary layer parts: a ``Munk'' type boundary layer, which is quasi-geostrophic, and an ``Ekman part'', which is not.
Even though the Munk part of the approximation bears some similarity with previously studied 2D models, the analysis of the Ekman part is completely new, and several of its properties differ strongly from the ones of classical Ekman layers.
Our theoretical analysis is supplemented with numerical illustrations, which exhibit the desired separation behavior.

\end{abstract}





\section{Introduction}

The purpose of this paper is to perform an asymptotic analysis of the system
\begin{equation}\label{NS-rho}
\begin{aligned}
\p_t u + \frac{1}{\varepsilon} (1 + \varepsilon \beta y)e_3 \wedge u + \frac{1}{\varepsilon}\begin{pmatrix}\nabla_h p\\ \delta^{-2} \p_3 p \end{pmatrix}  - \nu_h \Delta_h u - \nu_3 \p_3^2 u=&\ \frac{1}{\eps \delta^2}\begin{pmatrix}
    0\\ 0\\ -  \rho
\end{pmatrix}+\beta f\quad \text{in }\Om,  \\
\p_t \rho - \frac{1}{\varepsilon } u_3=&\ 0\quad \text{in }\Om,   \\
\Div u=&\ 0\quad \text{in }\Om,  \\
u\vert _{\p\Om}=&\ 0,  
\end{aligned} 
\end{equation}
in the regime $\eps,\nu_h,\nu_3,\delta\ll 1$ and $\beta\gg 1$. In fact, we will choose all parameters as powers of $\eps$, in a way that we will specify later (see assumption \eqref{hyp:parameters} below).
This system is a linearized version of the rotating Boussinesq model in a $\beta$-plane approximation, and can be seen as an idealized toy model for the behaviour of oceanic currents on large horizontal scales. 
The function $u$ is the 3D velocity, $p$ is the pressure, and $\rho$ is the variation of the density, that is, the buoyancy. Let us also mention that the parameter $\eps$ is both the Rossby number and the Brunt-Väisälä frequency, $\delta$ is the aspect ratio of the fluid domain, $\nu_h$ and $\nu_3$ are the rescaled horizontal and vertical eddy diffusivities.
The parameter $\beta$ stems from a Taylor expansion of the Coriolis factor.
We provide a short, formal derivation of \eqref{NS-rho} in Appendix \ref{appendix-derivation} in this context, together with a notion of weak solution.

One of the main novelties of our study, compared to previous works, lies in the geometry of the domain $\Om$.
Indeed, we are interested in the case where the bottom boundary of $\Om$ is \textit{not} a flat horizontal surface. 
Our goal is to understand the influence of the topography on the dynamics of oceanic currents, and more specifically its interaction with the stratification of the fluid.
Thus we consider the idealized case where 
 $\Om$ is an infinite half-plane over a tilted surface, namely $\Om=\{(x_1, x_2, x_3)\in \R^3,\ x_3> \tan \alpha x_1\}$, with $\alpha \in (-\pi/2, \pi/2)$.
This simple geometry will allow us to perform explicit computations and to construct an approximate solution of system \eqref{NS-rho} at an arbitrarily high order.
Such an idealized configuration is very similar to the one studied by Pedlosky in \cite{pedloskywave}, Lecture 19, \textit{Topographic Waves in a Stratified Fluid}, partly with the same ingredients : a rotating stratified fluid interacting with topography.

The source term $f$ models external forces, such as the wind forcing, acting on the system. 
It is assumed to be time periodic, with a given frequency $\om$.
Although the wind forcing rather acts at the surface of the fluid as a boundary condition, we describe here its influence as a volumic source term, since our domain does not have an upper boundary.
However, we believe that our methodology could be applied to more general geometries, and in particular to 3D oceanic basins which are bounded in the vertical direction. 

The main achievements of the paper are as follows.
First, as mentioned above, we are able to construct an approximate solution at an arbitrary order, provided the source term $f$ is sufficiently smooth. This solution can be computed analytically from the forcing $f$, and we can use the semi-explicit formula to plot it in Python. The numerical simulations give a behaviour very similar to realistic configurations, see Section \ref{sec:results} for illustrations. 
We precisely chose to work with a linear model in order to be able 
 to plot a semi-explicit solution, though we could have added non-linear terms as long as they can be treated perturbatively. However, let us just emphasize that dominant non-linear effects would oblige us to change drastically our strategy.

This approximate solution is quasi-geostrophic, and is obtained as an asymptotic expansion in powers of the parameter $\eps\beta\ll 1$. 
Each term in the expansion is itself the sum of three terms: one interior term, one ``Munk type'' boundary layer term, and one ``Ekman type'' boundary layer, whose width will be much smaller than the Munk boundary layer term.
Although the construction of Munk boundary layers is similar to previous works in the presence of flat topographies and vertical coastlines (i.e. when $\Om=(0,1)^3$, for instance), the construction of Ekman layers in the present  context (strong topography, importance of the stratification) is completely new.
Of course it is also crucial to understand the interplay between the different components of the approximate solution.
Second, we will prove the stability and the validity of this approximate solution in two different frameworks (time-periodic solutions and Cauchy problem).

\subsection{Motivation from physical oceanography} \label{sec:motivation}

Numerous simplified versions of the Navier--Stokes equations adapted to the ocean configuration exist in physical oceanography. 
Our long-term interest  is to describe the interaction between western boundary currents (such as the Gulf Stream) and topography, and more specifically how western boundary currents separate from the coast. 
Let us now explain some of the modelling considerations which led us to choose \eqref{NS-rho} to describe the phenomenon we are interested in.

First and foremost, the $\beta$-plane approximation (\textit{i.e.} a Taylor expansion at order 1 of the Coriolis force around a given latitude, rather than at order 0)
is necessary to see western boundary currents emerge (see \cite{pedlosky1996ocean}).
Second, we require our model to take into account topographies that vary at the same order of magnitude than the water depth. Indeed, where western boundary currents separate from the coast, the depth of the ocean floor can change from a few hundred metres to several kilometres over a small horizontal scale (a few tens of kilometres). Thus, the (often used) assumption of small variations in topography relative to water height (see for instance \cite{desjardins1999homogeneous,masmoudi2000ekman}) does not apply to the separation phenomenon we wish to describe here. 

This last point  led us to model a stratified ocean and describe a 3D stream function rather than a 2D one.
Let us give a bit more details. If the ocean is assumed to be homogeneous, i.e. with constant density, then $\rho=0$ in \eqref{NS-rho} (recall that $\rho$ represents the density variations).
As a consequence, looking formally at the main order terms in \eqref{NS-rho} as $\eps \to 0$ and taking the parameters $\eps$ and $\beta$ so that $\eps\beta\ll 1$, we obtain
\[
u_h^\bot + \nabla_h p =0,\qquad \p_3 p=0,
\]
where $u_h^\bot= (-u_2, u_1)$. It follows that the motion is geostrophic at main order, and no variation on the vertical occurs, therefore it is described by a 2D stream function: this is the Taylor--Proudman theorem.
Using the divergence free condition, we also find that $u_3=0$.
Now, assume that $\Om=\{x_3>\eta_B(x_h)\}$ for some smooth function $\eta_b (x_h)$ (we choose $\eta_B=x_1 \tan \alpha$ in the present paper, but we consider general topographies in this paragraph for the sake of discussion).
Enforcing the condition $u\cdot n=0$ on the bottom boundary, we find that $\nabla_h^\bot p(x_h)\cdot \nabla_h \eta_B(x_h)=0$.
When $\eta_B$ is not constant, a solution of this equation is given by $p= \Phi(\eta_B)$ for some smooth function $\Phi$, and in this case $u_h$ is colinear to $\nabla_h^\bot \eta_B$ everywhere.
In conclusion, we find that in the fast rotation limit, the
flow is forced to follow the isobaths. Thus a western boundary current running along the coast could not bifurcate towards the inner ocean, as it would cross the isobaths. 
Hence we work with a non-homogeneous model in order to describe properly the western boundary currents.

This constraint for rapidly rotating homogeneous fluids is well known from a physics perspective, see for example \cite{Pedlosky-GFD}, and was recently demonstrated mathematically in the case of a fluid above a topography with non-small variations in \cite{chemin2024ekman} (apart from the homogeneity of the fluid, the other differences in assumptions we make compared to \cite{chemin2024ekman} are the linearity of our model and the presence of the $\beta$ effect (not to be confused with the coefficient $\beta$ in \cite{chemin2024ekman})).

Numerical experiments dating back to the 1970s (\cite{Holland1973}) showed that by simultaneously taking into account topography and stratification in an idealised ocean basin configuration with a western boundary current, the results obtained were significantly more realistic than those obtained using only stratification or only topography. Subsequently, it was shown that the term describing the interaction between topography and stratification (called JEBAR, for ‘‘Joint Effect of Baroclinicity And Relief'') was key to understanding vorticity balances at the level of western boundary currents. In particular, in \cite{schoonover2016}, it was shown that the term associated with the $\beta$ effect (i.e., responsible for western boundary currents) is balanced by a term close to the JEBAR term. This of course does not guarantee that other terms do not play a role, but rather indicates that it is essential to take this effect into account.

However, while such studies have indeed demonstrated the importance of the JEBAR effect, this term remains a diagnostic term, unlike prognostic terms, which are calculated by solving an equation and determining the unknowns. Thus, the JEBAR term provides explanations \textit{a posteriori}, once the ocean velocity field is known (or at least the density field), and therefore cannot be used directly in practice. For example, in \cite{greatbatch1991}, we can see how ocean gyres are recovered from the JEBAR diagnostic term.

In this work, we treat the interaction between topography and stratification in a prognostic manner, that is, we do not assume any term to be known. 
In other terms, we provide an anayltic derivation of a JEBAR-type effect.
A potential application of our results would be to obtain a closed formula describing the interaction between topography and stratified fluid flow, using a wall law derived from the effective boundary conditions on the principal-order solution, that could then be used as a parametrisation in coarse resolution ocean models. 

\subsection{Results and numerical illustrations} \label{sec:results}

Throughout the paper, we will need to switch between two sets of coordinates: the ``global'' ones, namely $(x_1,x_2,x_3)$, associated with the basis $(e_1,e_2,e_3)$ where $e_1$ denotes the eastward normalized vector, $e_2$ the northward one, and $e_3$ the vertical one; and the ``local'' coordinates, namely $(x,y,z)$, associated with the basis $(e_x,e_y,e_z)$ with $e_x=\cos \alpha e_1 + \sin \alpha e_3$, $e_y=e_2$, and $e_z=-\sin \alpha e_1 + \cos \alpha e_3$, see Figure \ref{fig:coordinates}.
The case where $\sin \alpha >0$ corresponds to an eastern boundary, and the case $\sin \alpha<0$ to a western boundary, which is our main focus here.
\begin{figure}[h]
\centering
\begin{tikzpicture}[scale=1.5,>=Stealth]
  \fill[pattern=north east lines, pattern color=gray!60] (0,2.5) -- (2,0) -- (0,0) -- cycle;
  \draw[thick] (0,2.5) -- (2,0) node[anchor=west] {$\partial \Omega$};
  \draw[->,thick,blue] (1.1,1.25) -- ++(1,0.8) node[pos=1.0, below right,blue] {$e_z$};
  \draw[->,thick,blue] (1.1,1.25) -- ++(0.8,-1) node[pos=1.0, above right,blue] {$e_x$};
  \draw[->,thick,red] (1.1,1.25) -- ++(1.3,0) node[pos=1.0, below right,red] {$e_1$};
  \draw[->,thick,red] (1.1,1.25) -- ++(0,1.3) node[pos=1.0, right,red] {$e_3$};
  \fill (2.95,2.4) circle (0.pt) node[below] {$\bigotimes\  \textcolor{red}{e_2} = \textcolor{blue}{e_y}$};
  \draw[-, thick] (1.55,1.25) to[out=270, in=50] (1.4,0.9) ;
  \fill (1.8,1.15) circle (0.pt) node[below] {$\alpha<0$};
\end{tikzpicture}
\caption{\textcolor{blue}{Local} and \textcolor{red}{global} coordinate systems in the case $\sin \alpha<0$ (western boundary).}
\label{fig:coordinates}
\end{figure}

Let us now introduce the main assumptions on the parameters and on the source term $f$.
\paragraph{Assumptions on the parameters.} In most sections of the paper, we will assume that
\be\label{hyp:parameters}\tag{H0}
\ba
\beta= \eps^{-a},\  \om= \eps^{-b},\  \nu_h=\eps^d,\ \nu_3=\eps^e,\ \delta=\eps,\\
\text{with }0< a<1,\quad e\geq d\geq 0,\quad  b \leq \frac{2a-d}{3},
\ea
\ee
where $\om$ is the time frequency of the forcing, see (H1) below.
Let us comment a little on these assumptions. 
The assumption $a<1$ means that $\eps \beta \ll 1$, and therefore that the $\beta$-plane approximation is legitimate (i.e. the sine of the latitude can be replaced by a local Taylor expansion). 
The assumptions $e\geq d\geq 0$ imply that the (rescaled) eddy diffusivities are small, and that the vertical diffusivity is smaller than the horizontal one, which is classical in an oceanographic context, see \cite{Pedlosky-GFD} and the derivation in \cref{appendix-derivation}.
The assumption $\delta=\eps$ on the aspect ratio $\delta$ is not essential. This choice stems from the formal derivation of the model (see Appendix \ref{appendix-derivation}). However, it will not affect the main bricks of the construction, namely the Munk and Ekman boundary layers.
In Section \ref{sec:heuristique} where we present the method to construct the approximate solution, and in Section \ref{sec:Ekman} where we compute the Ekman boundary layer, we have kept a general parameter $\delta$, in order to trace its influence on the construction.
In the different results below, we could have taken $\delta=\eps^{M/2}$ for an arbitrary $M\in \N$. The main impact lies in the iterative construction of the approximate solution, see \cref{rem:delta-m}.
Eventually, the assumption $b \leq \frac{2a-d}{3}$ stems from the analysis of Munk boundary layers (see \cref{lem:characteristic}). It can probably be relaxed into $b \leq \frac{3a-d}{4}$, although we did not perform the estimates on the whole approximate solution in this regime. We refer to Remark \ref{rem:regime} for further discussion.

We will always keep the parameters $\om, \beta$, etc. in the expressions without replacing them by powers of $\eps$, in order to keep the influence of each parameter as explicit as possible.

\paragraph{Assmptions on the source term.} We will assume that the source term $f=(f_h,0)$ satisfies the following assumptions:
\begin{enumerate}[(H1)]
    \item \textit{Time periodicity:} there exists a function $F$ such that $f(t,x,y,z)= \Re(e^{i\om t} F(x,y,z))$;
    \item \textit{Regularity:} $F\in H^q(\Om)$ for some sufficiently large $q\in \N$;
\item \textit{Exponential decay:} there exists $\gamma>0$ such that  for all $q\in \N$, for $(q_x,q_y,q_z)\in \N^3$ with $q_x+q_y+q_z\leq q$,
\[
\| \p_x^{q_x} \p_y^{q_y} \p_z^{q_z} F(\cdot, z)\|_{L^2(\R^2)}\leq C_q e^{-\gamma z};
\]

\item \textit{Spectral gap near zero:} there exists $Q>0$ large enough such that
\[
\int_0^\infty\int_{\R^2} |\xi_y|^{-Q} |\hat F(\xi_x, \xi_y, z)|^2\dd \xi_x \dd \xi_y \dd z <+\infty,
\]
where $\hat F$  denotes the Fourier transform of $F$ with respect to $\xi_x, \xi_y$.
    
\end{enumerate}

Under these assumptions, we can construct an approximate solution up to any order. We only give a rather vague statement here, and we will provide a more precise description in \cref{sec:proof-thm} (see \cref{lem:approx-high-order}):
\begin{proposition}
Let $N\geq 0$, $m\geq 0$ be arbitrary.
Assume that hypotheses (H0)-(H4) are satisfied, with sufficiently large exponents $q,Q$ depending on $a,b,d, e$, $m$ and $N$. Assume furthermore that $\sin \alpha <0$ (western boundary).

Then there exists an approximate solution $(\uapp,\rho^\mathrm{app}) \in H^m(\Om)^4$ of \eqref{NS-rho} with a source term $f+ g_\mathrm{rem}$ such that
\[
\| g_{\mathrm{rem}}\|_{L^\infty_t(L^2(\Om))} \leq \eps^N.
\]
Furthermore, $(\uapp,\rho^\mathrm{app})$ can be constructed explicitly in terms of the source term $f$, as an asymptotic expansion in powers of $\eps \beta=\eps^{1-a}\ll 1$. Each term in the asymptotic expansion is the sum of an interior term and of boundary layer terms.

\label{prop:sol-high-order-simplified}
    
\end{proposition}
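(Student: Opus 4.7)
Because the forcing is time-harmonic, I first seek $(\uapp, \rho^\mathrm{app}) = \Re(e^{i\om t}(U^\mathrm{app}, R^\mathrm{app}))$, which turns \eqref{NS-rho} into a stationary spatial problem parametrised by $\om$. On this reduced problem I postulate a three-scale ansatz
\[
U^\mathrm{app} = \sum_{k=0}^{K} \eta^k \bigl( U^\mathrm{i}_k + U^\mathrm{M}_k + U^\mathrm{E}_k\bigr), \qquad \eta := \eps\beta = \eps^{1-a},
\]
with the analogous decomposition for $R^\mathrm{app}$. Here $U^\mathrm{i}_k$ depends on the slow spatial variables $(x,y,z)$, $U^\mathrm{M}_k$ is a Munk-type boundary layer concentrated near $\p\Om$ at the scale fixed by the characteristic exponents $\lpm$ of \cref{lem:characteristic}, and $U^\mathrm{E}_k$ is a thinner Ekman-type layer on the scale associated with the vertical viscosity. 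The integer $K=K(N,a,b,d,e)$ is chosen large enough that all residual terms are $o(\eps^N)$ after summation.

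\textbf{Order zero.} Injecting the ansatz in \eqref{NS-rho} and collecting powers of $\eta$, the leading interior terms satisfy the stratified Taylor--Proudman balance, which (using the coupling with the $\rho$ equation) reduces to a quasi-geostrophic elliptic problem for an interior stream function $\psiint_0$ forced by $F$. Assumptions (H2)--(H3) make this problem well-posed in $H^m(\Om)$ once the tilted half-space has been straightened via the local basis $(e_x,e_y,e_z)$. The trace of $(U^\mathrm{i}_0, R^\mathrm{i}_0)$ on $\p\Om$ is in general nonzero, so I add a Munk boundary layer $U^\mathrm{M}_0$ whose profile in the tangential Fourier variables solves the characteristic equation of \cref{lem:characteristic}; this corrects the trace components that can be absorbed by the exponentially decaying modes $\lpm$. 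The spectral-gap assumption (H4) is precisely what makes the associated Fourier integrals $L^2$-integrable, since the $\lpm$ modes generate negative powers of $|\xi_y|$. The remaining boundary components are then killed by an Ekman layer $U^\mathrm{E}_0$ posed on the faster scale along $\p\Om$, constructed by explicitly solving the ODE system of \cref{sec:Ekman} in the fast normal variable, with decay imposed at infinity.

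\textbf{Iteration and remainder.} Each of the three components, once substituted in \eqref{NS-rho}, generates errors of two kinds: terms of relative size $\eta$ coming from the subdominant operators (the $\beta$-effect seen by the boundary layers and the slow variation of the interior profiles along $\p\Om$), and matching errors on $\p\Om$ due to the fact that each layer cancels only part of the trace left by the previous one. Solving for $(U^\mathrm{i}_1, U^\mathrm{M}_1, U^\mathrm{E}_1)$ so as to absorb these residuals at order $\eta$ brings us back to the same three well-posed problems, at the cost of losing a finite number of derivatives and a few inverse powers of $|\xi_y|$, which is what eventually fixes the required exponents $q$ and $Q$. Iterating over $k=0,\ldots,K$ and choosing $K$ large enough that $\eta^{K+1}$, multiplied by the polynomial gains from the boundary-layer scales, beats $\eps^N$, yields an $H^m$ approximate solution whose residual $g_\mathrm{rem}$ is controlled in $L^\infty_t(L^2(\Om))$ by $\eps^N$.

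\textbf{Main obstacle.} The delicate step is the Ekman construction and its interplay with the two other components. Unlike the classical flat-bottom Ekman layer, the tilted coastline combined with stratification forces horizontal velocity, vertical velocity and density variation to be treated simultaneously in the fast normal variable; one must then check at each order the divergence-free condition, the no-slip condition and the decay at infinity, while tracking how each Ekman correction feeds back into the next Munk and interior cycles. This bookkeeping is what dictates the scaling constraint $b \leq (2a-d)/3$ mentioned below \eqref{hyp:parameters} and the precise values of $q,Q$ needed in (H2)--(H4).
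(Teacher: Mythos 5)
Your overall architecture — time-harmonic reduction, expansion in powers of $\eps\beta$, and a three-part decomposition into interior, Munk, and Ekman components constructed iteratively with boundary matching — is the same as the paper's. However, there are two concrete gaps in the plan that would cause it to fail as written.

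First, the boundary-matching count is off. Once you go to Fourier in the tangential variables, the no-slip condition on $\p\Om$ gives three scalar equations (the three velocity components). The Munk layer contributes two decaying exponentials $e^{-\mu_j^+ z}$ (hence two free coefficients), and the Ekman determinant $\eqref{determinant}$ has \emph{two} decaying roots $\lambda_1, \lambda_2$. If you naively keep both Ekman modes you have four free coefficients for three conditions and the system is underdetermined; conversely, discarding one arbitrarily leaves the construction unjustified. The paper's resolution (\cref{prop:Ekman} and \cref{rem:redundancy-Ekman}) is essential and your plan does not address it: the eigenvector attached to $\lambda_2$ turns out to be quasi-geostrophic at leading order, and depending on the regime either $\lambda_2$ coincides with one of the Munk roots $\mu_j^+$ (making the mode redundant) or the dropped $\beta$-term is in fact dominant there (making the derivation of $\lambda_2$ invalid). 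Only after discarding $\lambda_2$ do you get exactly three free coefficients, and then the correct ordering of the matching — first choose the Ekman coefficient $c^k_{\mathrm E}$ to kill the vertical trace, then solve a $2\times 2$ system for the Munk coefficients — closes the construction.

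Second, your order-zero step introduces an Ekman layer $U^{\mathrm E}_0$, but in fact no Ekman layer is present at orders $0$ and $1$. The leading term is purely geostrophic ($u^0_3\equiv 0$), and the first corrector has $u^1_3=-\p_t\p_3\psi^0$ whose trace on $\p\Om$ vanishes because $\psi^0=\p_n\psi^0=0$ there. So the vertical boundary condition is automatically satisfied at those orders, and Ekman correctors only enter from $k\geq 2$, once $\p_3 p^{k-1}$ no longer vanishes at the boundary. Related, but less central: the Green-function representation of the interior/Munk parts requires the source to have Fourier support in a ball of radius $R\ll \beta/|\om|$, so a preliminary high-frequency truncation of $f$ (with error controlled via (H2)) is needed before the iteration can start; your plan does not mention this step.
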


We are now ready to state our main stability results:
\begin{theorem}
Assume that hypotheses (H0)-(H4) are satisfied, with sufficiently large exponents $q,Q$ depending on $a,b,d$, and $e$. Assume furthermore that $\sin \alpha <0$ (western boundary).
Let $(u,\rho)\in H^1(\Om)\times L^2(\Om)$ be a time periodic solution of \eqref{NS-rho}, with period $T=2\pi/\om$.

There exists an approximate solution of \eqref{NS-rho} of the form $(u^0 + \eps u^1, \rho^0 + \eps \rho^1)$, with $u^j, \rho^j \in H^1\cap L^\infty((0,T)\times\Om)$ defined in  \eqref{u-zero-h}, \eqref{d-3-p}, \eqref{systeme_interieur} and in \eqref{u1hh}, \eqref{eqsurcorrecteurh} and satisfying the estimates
\[
\| u^j_1\|_{H^1_{x,y}L^2_{t,z}} + \|u^j_3\|_{H^1_{x,y}L^2_{t,z}} \lesssim \beta^j  ,\qquad \| u^j_2\|_{H^1_{x,y}L^2_{t,z}} + \| \rho^j\|_{H^1_{x,y}L^2_{t,z}} \lesssim \beta^j \left( \frac{\beta}{\nu_h}\right)^{1/6}\quad\text{for }j=0,1,
\]
and such that
\[\ba
\forall k\in \{1,3\},\quad\| u_k-(u^0_k+ \eps u^1_k)\|_{H^1_{x,y}L^2_{t,z}} \lesssim &\; (\eps \beta)^2  ,\\ \| u_2-(u^0_2+ \eps u^1_2)\|_{H^1_{x,y}L^2_{t,z}} + \| \rho - (\rho^0 + \eps \rho^1)\|_{H^1_{x,y}L^2_{t,z}}\lesssim &\; (\eps \beta)^2 \left( \frac{\beta}{\nu_h}\right)^{1/6}.\ea
\]
The approximate solution can be computed explicitly and is the sum of an interior term and of boundary layer terms.
    \label{thm:stab-periodic}
\end{theorem}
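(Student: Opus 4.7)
The plan is to combine the high-order approximate solution given by Proposition~\ref{prop:sol-high-order-simplified} with a linear stability estimate, and then extract from the expansion the two leading terms $u^0+\eps u^1$ explicitly.

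\textbf{Step 1: Building the right approximate solution.} I would apply Proposition~\ref{prop:sol-high-order-simplified} with $N$ chosen large enough so that $\eps^N\lesssim (\eps\beta)^2(\beta/\nu_h)^{1/6}$; since $\beta=\eps^{-a}$ and $\nu_h=\eps^d$, this fixes an explicit $N=N(a,d)$, which in turn fixes the required regularity/decay indices $q,Q$ in (H1)--(H4). The output is an approximate solution $(\uapp,\rho^\app)$ solving \eqref{NS-rho} up to a residual $g_{\mathrm{rem}}$ with $\|g_{\mathrm{rem}}\|_{L^\infty_t L^2}\leq \eps^N$. Writing the asymptotic expansion in powers of $\eps\beta$ delivered by the construction as $\uapp=U^{(0)}+(\eps\beta)U^{(1)}+(\eps\beta)^2U^{(2)}+\cdots+ r^\app$, I would set $u^0:=U^{(0)}$ and $\eps u^1:=(\eps\beta)U^{(1)}$, i.e.\ $u^1=\beta U^{(1)}$, and lump everything of order $(\eps\beta)^2$ or smaller into a residual to be controlled.

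\textbf{Step 2: Norm bounds on the leading profiles.} Each $U^{(k)}$ is the sum of an interior part (smooth, $O(1)$ in $L^\infty_{x,y,z}$ uniformly in the small parameters) and of Munk/Ekman boundary layer parts with explicit profiles in the stretched variable $x/\ell_M$, where $\ell_M=(\nu_h/\beta)^{1/3}$. For the stream function $\psi^{(k)}$ of the Munk part, $\|\psi^{(k)}\|_{L^\infty}$ is $O(1)$, so $\|\psi^{(k)}\|_{L^2_x}\sim \ell_M^{1/2}$; since $u_1=-\p_y\psi$ and $u_3$ are tangential and behave like $\psi$, they are $L^2_{x,y}$-bounded uniformly, while $u_2=\p_x\psi$ and $\rho\sim\p_x\psi$ scale like $\ell_M^{-1}\times \ell_M^{1/2}=(\beta/\nu_h)^{1/6}$. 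The extra factor $\beta^j$ in the theorem's bounds comes from $u^j=\beta^j U^{(j)}$. Differentiating in $x,y$ does not worsen these bounds since it acts on the slow variables. This justifies the claimed estimates on $u^j,\rho^j$ and shows that the residual $\eps u^\app-(u^0+\eps u^1)$ together with $r^\app$ is $O((\eps\beta)^2)$ with the same anisotropic pattern.

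\textbf{Step 3: Stability of the linear system.} Let $\tu:=u-(u^0+\eps u^1+\cdots)=u-\uapp$; then $(\tu,\tilde\rho)$ solves the linear system \eqref{NS-rho} with source $-g_{\mathrm{rem}}$, homogeneous Dirichlet boundary conditions and $T$-periodicity. I would multiply the momentum equation by $\tu$ and the density equation by $\delta^{-2}\tilde\rho/\eps$ (the correct weight to make the buoyancy/stratification terms cancel), integrate over one period in time and over $\Omega$; the rotation term vanishes since $e_3\wedge\tu\cdot\tu=0$, the pressure term vanishes by incompressibility and the boundary condition, and time-periodicity erases boundary terms in $t$. One obtains a dissipation identity of the form
\[
\int_0^T\!\!\int_\Om \bigl(\nu_h|\nabla_h\tu|^2+\nu_3|\p_3\tu|^2\bigr)\,\dd x\,\dd t=-\int_0^T\!\!\int_\Om g_{\mathrm{rem}}\cdot\tu.
\]
Combining with the Poincaré inequality in the horizontal directions (available because of the no-slip condition on $\p\Om$ and the exponential decay in $z$ that the construction preserves) yields an $L^2_{t,x}$ bound on $\tu$.

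\textbf{Step 4: Upgrade to $H^1_{x,y}L^2_{t,z}$ and conclusion.} Repeating the energy estimate on $\p_x\tu$ and $\p_y\tu$ requires handling the commutator with the $\beta y$ Coriolis coefficient: $\p_y$ produces an extra term $\beta\, e_3\wedge\tu$, which I would absorb by a weighted Young inequality using the horizontal dissipation $\nu_h\|\nabla_h \p_y\tu\|^2$, at the price of a loss $\beta^2/\nu_h$; this is exactly consistent with the $(\beta/\nu_h)^{1/6}$ factor after taking the square root and noting the anisotropic role of $\p_x$ vs $\p_y$. Choosing $N$ in Step~1 large enough to dominate all these losses gives the stated error bound. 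The main difficulty, and where the novelty of the present setting shows up, is keeping track of the different scales in the energy estimate: the Ekman boundary layer on the tilted bottom contributes nontrivially to $\nu_3\|\p_3\tu\|^2$ and its treatment is not covered by the classical flat-bottom theory, so one must use the explicit profiles from Section~\ref{sec:Ekman} to show that the test against $g_{\mathrm{rem}}$ does not resonate with them; this is the step that forces the large value of $Q$ in assumption (H4).
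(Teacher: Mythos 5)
Your overall strategy is the right one (and the same as the paper's): build a high-order approximate solution via \cref{lem:approx-high-order}, subtract, and run a periodic energy estimate. But several of the details you give for Steps~3--4 would not work, and you are missing a preprocessing step.

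First, \cref{lem:approx-high-order} requires the source term to have compact Fourier support; before invoking it, you need the truncation argument of \cref{lem:truncation-source}, which replaces $f$ by $\cF^{-1}(\mathbf 1_{|\xi|\leq R}\widehat f)$ at the cost of an $O(\eps^N)$ contribution to the remainder. This is why (H2)--(H4) appear in the hypotheses.

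Second, your multiplier is off: the correct choice is to test the horizontal momentum with $v_h$, the vertical momentum with $\delta^2 v_3$, and the density equation with $\varphi$ (not $\delta^{-2}\varphi/\eps$). With your weights the coupling terms $-\frac{1}{\eps}\int v_3\varphi$ and $-\frac{1}{\eps\delta^2}\int \varphi v_3$ do not cancel; with the paper's weights they do.

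Third, and more seriously, you invoke a Poincaré inequality ``in the horizontal directions.'' No such inequality is available: in local coordinates the domain is $\{z>0\}$, so $x,y$ range over all of $\R^2$ and no lower bound on $\nabla_h$ exists. The no-slip condition and decay are in $z$, not in $x,y$. The paper controls $\int g'_{\mathrm{rem}}\cdot v$ using the \emph{Hardy inequality in $z$}, $\int_0^\infty z^{-2}|v_i|^2\,\dd z\lesssim \int_0^\infty |\p_z v_i|^2\,\dd z$, paired with the weighted estimate $\sup_t\int(1+z^2)\|g_{\mathrm{rem}}\|_{L^2_{x,y}}^2\,\dd z\leq\eps^N$ on the remainder; and for the density part it uses a \emph{Poincaré inequality in time} (the time average of $\varphi$ over a period vanishes, so $\|\varphi\|_{L^2_t}\lesssim|\om|^{-1}\|\p_t\varphi\|_{L^2_t}$, and $\p_t\varphi=\eps^{-1}v_3+\ldots$). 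Without Hardy-in-$z$ you have no way to bound $\|v\|_{L^2}$ from the dissipation alone.

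Finally, Step~4 is unnecessary and likely to fail as stated: differentiating the equation in $y$ produces a commutator term $\beta e_3\wedge v$ that cannot simply be absorbed with a Young inequality against $\nu_h\|\nabla_h\p_y v\|^2$ in this regime without a catastrophic loss. The paper does not differentiate the equation at all: after the single energy estimate one obtains a bound on $\|\nabla v\|_{L^2((0,T)\times\Om)}$ from the dissipation term (divided by $\delta^2\inf(\nu_h,\nu_3)$), and $\|v\|_{H^1_{x,y}L^2_{t,z}}\leq\|\nabla v\|_{L^2}$ follows directly. The $(\beta/\nu_h)^{1/6}$ factor in the statement comes from the boundary-layer scalings of the approximate solution (your Step~2 is the right heuristic for that), not from a weighted Young inequality in the stability estimate.

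In short: right architecture, but you need (i) the Fourier truncation of the source, (ii) the correct energy weights, (iii) Hardy-in-$z$ plus Poincaré-in-time instead of a horizontal Poincaré, and (iv) no differentiation of the energy estimate.
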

\begin{remark}\label{rem:u0-geos-intro}
\begin{itemize}
    \item In fact, in view of \cref{prop:sol-high-order-simplified}, we could prove a stronger result.
    Indeed, it is also possible to prove the validity of a high order approximate solution,
     with an arbitrarily small remainder. 

\item We will give more details on the structure of the approximate solution in Section \ref{sec:heuristique}. Let us merely announce a couple of features: the main order term $u^0$ is geostrophic, i.e. $u^0= (\nabla_h^\bot p^0, 0) $ for some stream function $p^0$. However, unlike the unstratified case, the stream function $p^0$ is here 3D, which means that the velocity does depend on the vertical coordinate, and is not constrained by the isobaths. It consists of an interior term and a ``Munk'' boundary layer term, and it satisfies $u^0_{|\p \Om}=0$. The second term in the expansion, however, is not geostrophic. It is also the sum  of an interior term and a ``Munk'' boundary layer term, and it 
 is responsible for the detachment that is visible in the right part of \cref{fig:illustration}.

 We will see that the Ekman layer part is absent both from $u^0$, which is expected, and from $u^1$, which is less expected.

    \item In fact, when proving \cref{thm:stab-periodic}, we will obtain an error estimate in $L^1_t H^1(\Om)$. However, because of the presence of Ekman boundary layer terms, the size of $\| \p_z u^k\|_{L^2}$ for $k\geq 2$ is potentially very large, see \cref{lem:approx-high-order}. Hence we state our result in the space $H^1_{x,y}L^2_{t,z}$, in which the higher order terms are indeed negligible. 
\end{itemize}

\end{remark}

We will also prove a stability result for the Cauchy problem associated with \eqref{NS-rho}:
\begin{theorem}
Assume that assumptions (H0)-(H4) are satisfied, with exponents $q,Q$ depending on $a,b,d$, and $e$. Assume furthermore that $\sin \alpha<0$.

Let $(u^0 + \eps u^1, \rho^0 + \eps \rho^1)$ be the approximate solution constructed in \cref{thm:stab-periodic}.

Let $(u_{\mathrm{ini}}, \rho_{\mathrm{ini}})\in L^2(\Om)^2$ be such that $\Div u_{\mathrm{ini}}=0$, and let $(u,\rho)$ be the  weak solution of \eqref{NS-rho} with initial data $(u,\rho)(t=0)= (u_{\mathrm{ini}}, \rho_{\mathrm{ini}})$.
Assume that
\[
\| u_{\mathrm{ini},h} - (u^0_h + \eps u^1_h)_{|t=0}\|_{L^2} + \delta \|  u_{\mathrm{ini},3} - (u^0_3 + \eps u^1_3)_{|t=0}\|_{L^2} + \| \rho_{\mathrm{ini}} - (\rho^0 + \eps \rho^1)\|_{L^2} \lesssim (\eps \beta)^2.
\]
Then for all $t\geq 0$,
\[
\| u_h(t) - (u^0_h + \eps u^1_h)(t)\|_{L^2} + \delta \| u_3(t) - (u^0_3 + \eps u^1_3)(t)\|_{L^2} + \| \rho(t)- (\rho^0 + \eps \rho^1)(t)\|_{L^2} \lesssim (\eps \beta)^2 (1+t).
\]

    \label{thm:stab-Cauchy}
\end{theorem}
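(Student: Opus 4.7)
The plan is to exploit the linearity of \eqref{NS-rho} and reduce everything to a single weighted energy estimate on the error. Set
\[
(U,R) := \big(u-(u^0+\eps u^1),\ \rho-(\rho^0+\eps\rho^1)\big).
\]
By the linearity of \eqref{NS-rho}, the pair $(U,R)$ is a weak solution of the same system \eqref{NS-rho}, but where the wind forcing $\beta f$ is replaced by $-r$, $r$ being the residual obtained by plugging $(u^0+\eps u^1,\rho^0+\eps\rho^1)$ into \eqref{NS-rho}. The very construction underlying \cref{thm:stab-periodic} produces, as a by-product of its proof, a bound of the form $\|r(t)\|_{L^2(\Om)_{\mathrm{w}}}\lesssim (\eps\beta)^2$ uniformly in $t$, where the weight reflects the anisotropy $(1,1,\delta^2)$; this is in fact the quantity controlling the error in \cref{thm:stab-periodic}. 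The assumption on the initial data states precisely that the weighted $L^2$-norm of $(U,R)(0)$ is $\lesssim(\eps\beta)^2$.

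The second step is the derivation of the natural weighted energy identity. I test the momentum equation by $(U_1,U_2,\delta^2 U_3)$ and the density equation by $R$. Three cancellations occur: $(i)$ the Coriolis term $(1+\eps\beta y)e_3\wedge U$ is pointwise orthogonal to $(U_1,U_2,\delta^2 U_3)$; $(ii)$ the pressure contribution vanishes thanks to $\Div U=0$ together with $U|_{\p\Om}=0$, since the weight $\delta^{-2}$ on $\p_3 p$ is exactly compensated by the weight $\delta^2$ on $U_3$; $(iii)$ the buoyancy coupling $\eps^{-1}\delta^{-2}(-R)$ tested against $\delta^2 U_3$ cancels exactly with the term $\eps^{-1}U_3$ tested against $R$ coming from the density equation. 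The viscous terms yield a nonnegative dissipation. This gives
\[
\tfrac{1}{2}\tfrac{\dd}{\dd t}\Big(\|U_h\|_{L^2}^2+\delta^2\|U_3\|_{L^2}^2+\|R\|_{L^2}^2\Big)+D(U)\ \leq\ \Big|\int_\Om r\cdot(U_h,\delta^2 U_3,R)\Big|,
\]
where $D(U)\geq 0$ collects the viscous dissipation.

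In the third step I drop $D(U)$ and apply Cauchy--Schwarz on the right-hand side to obtain, with $\mathcal{E}(t):=\big(\|U_h(t)\|_{L^2}^2+\delta^2\|U_3(t)\|_{L^2}^2+\|R(t)\|_{L^2}^2\big)^{1/2}$,
\[
\tfrac{\dd}{\dd t}\mathcal{E}(t)\ \leq\ \|r(t)\|_{L^2(\Om)_{\mathrm{w}}}.
\]
Integration in time then yields $\mathcal{E}(t)\leq\mathcal{E}(0)+t\,\sup_{s\in[0,t]}\|r(s)\|_{L^2_{\mathrm{w}}}$, and combining the initial data bound with the residual bound of order $(\eps\beta)^2$ produces the announced estimate $\mathcal{E}(t)\lesssim(\eps\beta)^2(1+t)$.

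The main obstacle is the second step, namely justifying the weighted energy identity for the weak solution $(u,\rho)$ supplied by Appendix \ref{appendix-derivation} (and consequently for $(U,R)$). For smooth solutions it is a direct integration by parts, but for a general weak solution one must approximate by smooth functions, combining a time mollification with a tangential mollification compatible with the no-slip condition on the tilted half-space $\Om$. The two delicate points are that the anisotropic viscosity $-\nu_h\Delta_h-\nu_3\p_3^2$ and the singular pressure factor $\delta^{-2}$ must be handled consistently with the chosen weight, and that one must check the integrability in time of $\|r(\cdot)\|_{L^2_{\mathrm{w}}}$; the latter follows from the explicit form of the approximate solution and the regularity assumptions (H1)--(H4).
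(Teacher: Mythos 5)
The energy framework in your steps 2--3 matches what the paper does, but step 1 contains a fatal miscalculation that the paper is explicitly designed to avoid. You claim that the residual $r$ obtained by plugging $(u^0+\eps u^1,\rho^0+\eps\rho^1)$ directly into \eqref{NS-rho} satisfies $\|r(t)\|_{L^2_{\mathrm w}}\lesssim(\eps\beta)^2$. This is false, and the paper warns about it in \cref{sec:strategy-intro}: ``since equation \eqref{NS-rho} has a strong penalization due to the fast rotation, the thin layer effect, and the small diffusivity, we will need to build an approximate solution at a very high order in order to justify merely the very first terms of the expansion.'' Concretely, the residual of a two-term truncation contains, for instance, the unbalanced horizontal contribution $\eps(\p_t-\Delta_\nu)u^1_h+\eps\beta y(u^1_h)^\bot$, of size $O(\beta(\eps\beta))=O(\eps\beta^2)$, and the vertical momentum residual $(\p_t-\Delta_\nu)u^0_3$ is of size $O(\om)$. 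Under \eqref{hyp:parameters} ($\beta=\eps^{-a}$ with $a<1$, $\om=\eps^{-b}$), one has $\eps\beta^2=\eps^{-1}(\eps\beta)^2\gg(\eps\beta)^2$ and $\om\gg(\eps\beta)^2$. Feeding such a residual into your Gronwall step would produce an error growing like $t\cdot\eps^{-1}(\eps\beta)^2$, which destroys the claimed bound.

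The paper's proof avoids this by introducing an intermediate object: it constructs, via \cref{lem:approx-high-order}, a high-order approximation $u^{\mathrm{app}}=\sum_{k=0}^K\eps^k u^k$ whose remainder $g'_{\mathrm{rem}}$ can be made $O(\eps^N)$ for $N$ as large as needed, runs the weighted energy estimate \eqref{est:energy} for $v=u-u^{\mathrm{app}}$, $\varphi=\rho-\rho^{\mathrm{app}}$ (with exactly your choice of test function $(v_h,\delta^2 v_3,\varphi)$ and the same three cancellations), integrates in time to get $E_\eps(t)^{1/2}\lesssim E_\eps(0)^{1/2}+\|g'_{\mathrm{rem}}\|_{L^1((0,t),L^2)}$, and then concludes by the triangle inequality, since the estimates in \cref{lem:approx-high-order} give $\|u^{\mathrm{app}}-(u^0+\eps u^1)\|=\|\sum_{k\geq 2}\eps^k u^k\|\lesssim(\eps\beta)^2$ (and similarly at $t=0$, which is also needed to reconcile your initial-data hypothesis with $E_\eps(0)$). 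So the missing idea in your sketch is precisely this detour through the $K$-term expansion: without it, the comparison is not with a genuinely small residual but with one that is $\eps^{-1}$ larger than the target accuracy. Your observations about the cancellation structure, the anisotropic weight, and the final Gronwall bound $\mathcal E(t)\lesssim\mathcal E(0)+\int_0^t\|g'_{\mathrm{rem}}\|$ are correct and do coincide with the paper once the comparison object is replaced by $u^{\mathrm{app}}$. The concern you raise at the end about justifying the energy identity for weak solutions is a legitimate but secondary technical point that the paper handles implicitly through the weak formulation in Appendix~\ref{appendix-derivation}.
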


\begin{remark}
Note that because of the thin layer scaling, the approximation on $u_3$ from the energy estimate is degenerate. However one can retrieve an estimate on $u_3- (u^0_3 + \eps u^1_3)$ by using the divergence-free condition together with the estimate on $\nabla_h u_h$, for instance.
\end{remark}

\paragraph{Numerical experiments.} The framework used to perform simulations of the behaviour of the solution is the one presented above, with a twisted upper half-space, and a forcing that is periodic in time, oscillating and exponentially vanishing in $x_2$, and exponentially vanishing in $x_1$ and $x_3$.

Our main goal was to recover a behaviour similar to the one described in the paper by Zhang and Vallis \cite{zhang_role_2007}, that is, to observe a creation of positive vorticity where the western boundary current separates from the coast because of the effect of bottom pressure torque (and the associated bottom vortex stretching). Note that the bottom pressure torque is strongly linked to the JEBAR term, see for instance \cite{mertz_interpretations_1992}. This should translate into a separation southward, as explained in \cite{zhang_role_2007}.

\newpage
 
\begin{figure}[h]
    \centering
    \includegraphics[width=0.95\linewidth]{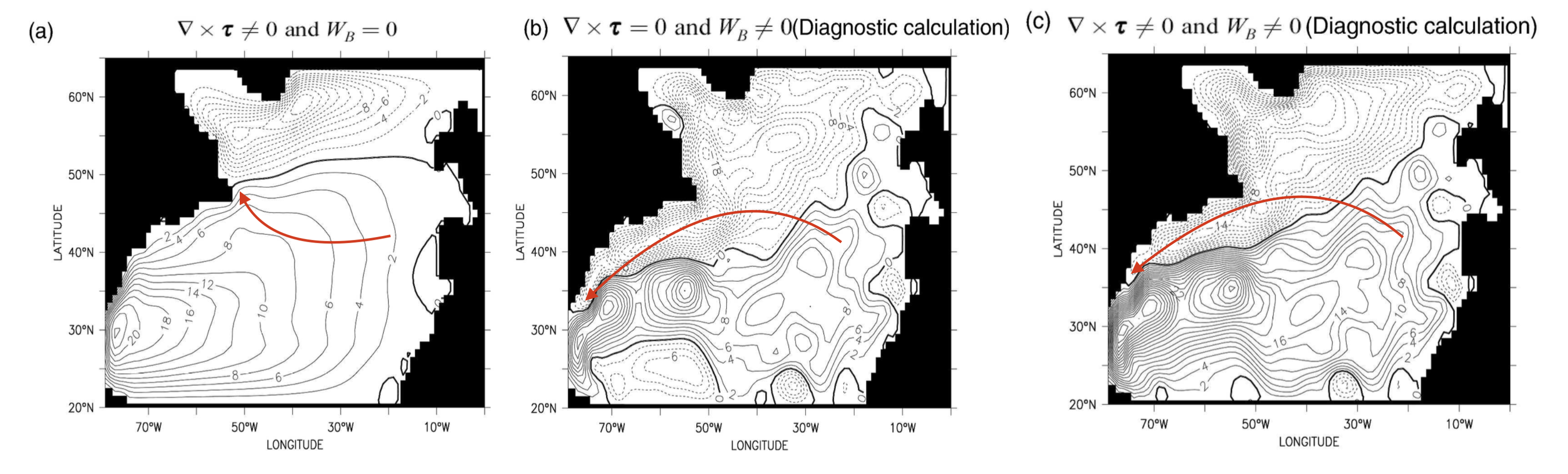}
    \caption{Stream function in the North Atlantic ocean computed with (a) only the wind stress, (b) only the diagnostic vertical velocity field, and (c) both (figure from \cite{zhang_role_2007} except the arrows).}
    \label{fig:zhang}
\end{figure}

This behaviour can be observed in Figure 12 of \cite{zhang_role_2007}, which we partly reproduce here in Figure \ref{fig:zhang} for the sake of clarity. The important feature is that the separation of the Western boundary current, indicated by the red arrows, occurs further south when we add the vertical velocity (48°N with no vertical velocity, against 32/35°N with vertical velocity).





Now, we present the behaviour of the solution in our study case.

\begin{figure}[h]
    \centering
    \begin{minipage}{0.48\textwidth}
        \centering
        \includegraphics[width=7cm,height=6cm]{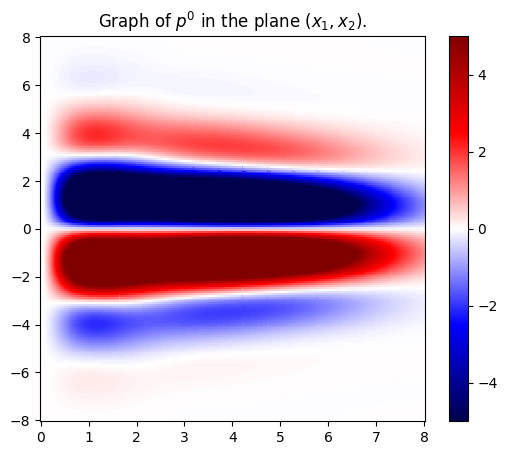}
    \end{minipage}
    \hfill
    \begin{minipage}{0.48\textwidth}
        \centering
    \includegraphics[width=7cm,height=6cm]{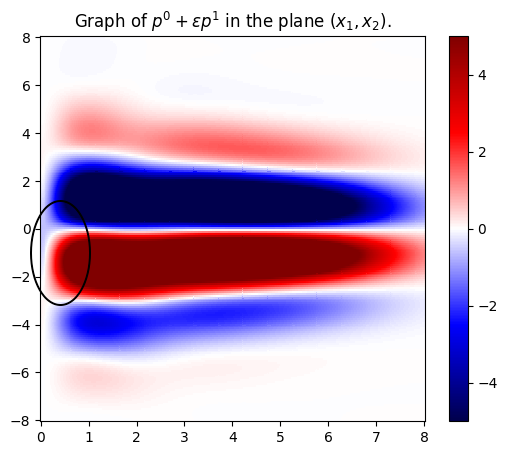}
    \end{minipage}
    \caption{Illustration of the role of topography in the behaviour of the stream function, in the $(x_1,x_2)$ plane, at $x_3$ fixed, plotted with Python (using exact terms explicited in the sequel of the paper). On the left, almost no difference is observed with the 2D case with no topography. On the right, the separation point of the western boundary current has been shifted southward, and the path is different.}
    \label{fig:illustration}
\end{figure}

The parameters used to obtain the figure \ref{fig:illustration} are the following. The angle $\alpha$ of the slope is such that $\cos^2(\alpha) = 0.2$ and $\sin^2(\alpha)= -0.8$. Moreover, the wind stress is the product of a gaussian in the $x_3$ coordinate, a gaussian as well in the $x_1$ coordinate, and a gaussian multiplied by a sine in the $x_2$ coordinate. The small parameters used are $\varepsilon = 0.005$, $\beta = \varepsilon^{-0.5}$, $\omega = \varepsilon^{-0.2}$, $\nu_h = \varepsilon^{0.2}$ and $\nu_3 = \frac{\nu_h}{10}$. A subsequent paper will explore in detail the role of the choice of parameters on the behaviour of the solution. 

Next, let us say a few words about the figure on the left to explain a little about the study configuration. In the $x_2$ direction, we have an oscillating and decreasing forcing term, which therefore produces four gyres, two dominant ones (the one between $x_2=0$ and $x_2=2$ is cyclonic, and the one symmetrical with respect to $x_2=0$ is therefore anticyclonic), and two that are evanescent and of less interest to us here. Furthermore, we can clearly see the structure of the solution as a superposition of an interior term that satisfies Sverdrup's equilibrium (see \eqref{svredrup} below), and a boundary layer term that corresponds in fact to the superposition of two boundary layers: the two usual Munk boundary layers. Finally, the main zero of the wind stress curl, which gives us the separation of the main boundary current, is located at $x_2=0$.

Now, regarding the figure on the right, there are two points to note. First, far from the boundary, the structure of the solution with no corrective terms appears to be preserved and Sverdrup's equilibrium remains respected. Second, it seems that the separation does occur further south, observing the main zero isoline of the stream function. This corresponds to the effect that we wanted to model, following \cite{zhang_role_2007}. 
The comprehensive study of the numerical model, which will be the subject of our future paper, will also provide an opportunity to explore this effect in further detail.






Finally, the separation pattern obtained in Figure \ref{fig:illustration} is very similar to the one obtained by NEMO simulations. We did simulation in a rectangular box, one with no topography, and two with topographies, and we saw that the impact of the topography was to move southward the separation point of the western boundary current, similarly to what \cite{zhang_role_2007} discribes. It can be understood through the fact that the flow will follow $\frac fh$ contours. 

\begin{figure}[h]
    \centering
    \includegraphics[width=14cm,height=5cm]{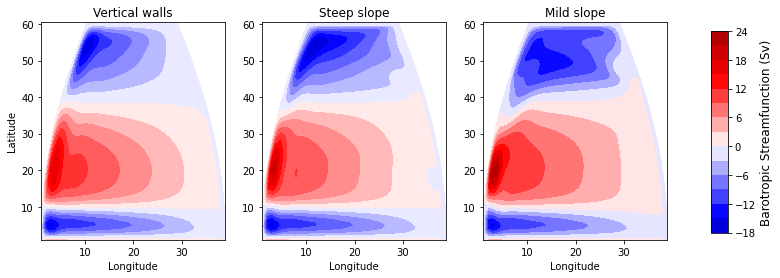}
    \caption{Barotropic stream function in three NEMO configurations, with the  slopes acting on the separation point latitude.}
    \label{fig:nemo}
\end{figure}

\subsection{Strategy of proof and comparison with previous works}\label{sec:strategy-intro}

Theorems \ref{thm:stab-periodic} and  \ref{thm:stab-Cauchy} strongly rely on the precise construction of an approximate solution to  equation \eqref{NS-rho}.
Several challenges are raised by such an equation. Its structure indicates that the solution should be geostrophic at  main order, i.e. of the form $(\nabla_h^\bot p,0)$.
However, because of the stratification, the solution must have a vertical component and therefore cannot be  purely geostrophic. 
Therefore, the approximate solution we will build comes with correctors of all orders, and has the structure \be \label{ansatz-intro-k} u = u^0 +\sum\limits_{k=1}^K \eps^k u^k,\ee
with $\nabla\cdot u^k = 0$ for each $k=0,...,K$.

Now, we explain the role and structure of the two types of terms $u^0$ and $u^k$, $k\geq 1$.

\begin{itemize}
\item The main order term $u^0$ is  the geostrophic component of the solution, i.e. $u^0=(\nabla_h^\bot p^0, 0)$, where  $p^0$ is the pressure at the main order, and plays the role of a stream function.
    The velocity $u^0$ (and so the function $p^0$) can be further decomposed into two parts: 
    \begin{itemize}
        \item an interior part denoted $u_{\mathrm{i}}^0$ that essentially satisfies Sverdrup equilibrium at  main order (this fact is proved in \cref{prop:QG3D-visc}). In particular, it does not see the boundary condition on western boundaries, i.e. when $\sin \alpha <0$.

        \item a boundary layer part of ``Munk type'', denoted $u_{\mathrm{M}}^0$ that allows us to satisfy two out of three boundary conditions on western boundaries. Note that the third boundary condition is satisfied because the velocity $u^0$ is purely horizontal.
    \end{itemize}
The overall construction of the geostrophic component is similar to previous works \cite{charve2005convergence,desjardins1998derivation,dalibard2018mathematical}; however we explore more thoroughly different regimes for the Munk boundary layers, identifying in particular regimes where the dissymetry between western and eastern boundaries may disappear (see \cref{lem:characteristic} and \cref{rem:dissymetry-EW}).

    \item The next order terms $\varepsilon^k  u^k$, for $k \geq 1$, are non-geostrophic corrections. Their presence ensure the conservation of mass (i.e. the evolution equation for $\rho$).
Each $u^k$ can be decomposed into the sum of an interior part, denoted $u_{\mathrm{i}}^k$, a Munk boundary layer part, denoted $u_{\mathrm{M}}^k$, and an Ekman part, denoted $u_{\mathrm{E}}^k$. 
The first two components (interior and Munk part) satisfy a quasi-geostrophic (QG) equation that will be derived in Section \ref{sec:heuristique}.
The role of the Ekman part is to ensure that the vertical component of the velocity vanishes on the boundary. 
It does not satisfy a QG equation, but solves the system \eqref{NS-rho} with no $\beta$ term.

 \end{itemize}

Note that the above decomposition is  classical (see for instance \cite{CDGG,desjardins1999homogeneous}). However, in most works, only the main order component is derived and analyzed.
Motivated by the description of western boundary current separation influenced by topography and stratification, we will need to push the expansion further. 
Indeed, the southward drift of the separation point is not visible at main order, but only within the first order correction $u^1$ (see \cref{fig:illustration}). 
Therefore our goal is truly to have an approximation result that validates the structure of the first order corrector, in the sense that $\| u-(u^0 + \eps u^1)\|=o(\eps \|u^1\|)$ in some suitable norm.
We encounter here a first difficulty: since equation \eqref{NS-rho} has a strong penalization due to the fast rotation, the thin layer effect, and the small diffusivity, we will need to build an approximate solution at a very high order in order to justify merely the very first terms of the expansion, see \cref{sec:proof-thm} for more details.

Another new feature of our work lies in the fact that the different boundary layers (i.e. the Munk and Ekman boundary layers) are all supported in the same region, namely in the vicinity of $z=0$.
In contrast, when the domain $\Om$ is of the form $(x_{\mathrm{W}}, x_\mathrm{E})\times \R\times (0,1)$, the Munk boundary layers are located in the vicinity of $x_1=x_\mathrm{W}$ and the Ekman layers near $x_3=0$ or $x_3=1$. Therefore, unless possibly in corners, the Munk and Ekman layers are not supported in the same region.
In the present study, understanding the interplay between the different boundary layers in the expansion is key.
In particular, let us highlight a phenomenon that was completely absent from previous studies on the subject.
In classical works on Ekman layers (see for instance \cite{grenier1997ekman,masmoudi2000ekman}), the Ekman boundary layer term is a linear combination of two decaying exponentials. 
As a consequence, there are two degrees of freedom associated with each Ekman layer.
In the present context, at first sight, we also find two degrees of freedom with the Ekman layer (see \cref{prop:Ekman}).
However, one of the two roots corresponds to a solution that is quasi-geostrophic, and therefore needs to be discarded (see \cref{rem:redundancy-Ekman} for more details).
Hence we only have one degree of freedom within the Ekman layer, and two within the Munk layer. These three degrees of freedom will ultimately allow us to ensure that the three components of the velocity vanish on the boundary. 
Let us emphasise another key feature of the single Ekman layer found in our case. The associated velocity vector is dominated by $e_x$, and so the Ekman boundary layer has a non-zero vertical component (i.e. along $e_3$) at first order. Therefore, the concept of Ekman pumping and the associated mechanisms are not fully relevant in our case, see Remark \ref{rem:structure-ekman-valide} for more details.


At last, let us mention that even though some recent works explore the behaviour of Ekman layers in the presence of a large topography \cite{chemin2024ekman}, the computation of Ekman layers in the presence of stratification and large topography had not been performed with system \eqref{NS-rho}. 
Although settings similar to \eqref{NS-rho} have been investigated 
in the physical literature (see for instance \cite{maccready_slippery_1993} in the case of small topography and \cite{garrett_boundary_1993} with diffusion only in the normal direction), the derivation we perform in the present paper seems completely new, together with the computation of the possible boundary layer sizes and profiles, and with the discussion around the two potential Ekman roots. 
Indeed the focus of \cite{maccready_slippery_1993,garrett_boundary_1993} is rather the understanding of the ``stopping phenomenon'', which is not our interest here since we analyze the linear response to an oscillatory forcing. 
 Our derivation differs rather strongly from the classical cases analyzed in \cite{grenier1997ekman,masmoudi2000ekman,CDGG}, and  the structure of the solution is also unusual. Indeed, in the $e_y$ direction, the viscous dissipation is balanced by the Coriolis force, as in classical Ekman layers. However, in the $e_x$ direction, the rotation does not play a role at main order: viscous dissipation is balanced by a combination of the pressure gradient and of the stratification (i.e. by the non-hydrostatic part of the pressure). 

\begin{remark}[Possible extensions]
As explained above, the present paper is a first step in the mathematical analysis of the effect of stratification and topography on the
separation of western boundary currents. However, model \eqref{NS-rho} is clearly an idealization, and our results could be generalized in many possible ways.
The most important extensions, both from the applied and the theoretical point of view, would be to add the nonlinear effects into the system, and to consider arbitrary (i.e. non flat) topographies.
Let us comment a little on these two perspectives. It is possible that part of the analysis of the present paper could be generalized to a nonlinear setting, as long as the nonlinearity is not too large, namely as long as the boundary layers remain linear at main order and the nonlinearity is only present in the interior of the flow, as in \cite{CDGG} for instance. 
In this case, each boundary layer term in the asymptotic expansion would satisfy a linear system with a source term depending in a nonlinear way on lower order correctors.
When the boundary layers become nonlinear, however, the situation is much more complicated. 
For instance, in some regimes, the Munk boundary layer in the presence of an advective term is akin to the Prandtl boundary layer, see \cite{dalibard2021existence,wang2018well}. In this regime, the analysis of the separation phenomenon and the presence of recirculating flows in the boundary layer may become very difficult to analyze mathematically, as the recent works \cite{dalibard2022linear,iyer2022reversal} demonstrate. 
Nonetheless, a recent study \cite{miller2025impact} describes from the physical point of view the impact of stratification on western boundary currents, using a 2-layer or 1.5-layer QG model with advection. Part of their discussion relies on the stability analysis of the inertial boundary layer, and therefore it would be very interesting to understand how the present analysis can be articulated with \cite{miller2025impact}, even at a formal level.

As for non-flat topographies, we believe that the present study can be used as a guideline to guess the structure and the interplay between the different boundary layers (of Ekman and Munk types) in situations in which the topography $\eta_B$ is non-flat, and in regions where its gradient is non-zero and bounded.
However, we expect that the major difficulties will be encountered when transitioning from regions where $1\lesssim | \nabla \eta_B| \lesssim 1$ 
(corresponding to a case where $\sin \alpha$ and $\cos \alpha$ are bounded away from zero in the present study)
to regions where $\nabla \eta_B=0$ (critical point, corresponding to $\sin \alpha=0$) or $|\nabla \eta_B|=\infty$ (vertical cliffs, corresponding to $\cos \alpha=0$).
In these two cases, the sizes of the boundary layers and their profiles change abruptly within a small horizontal region. This is related to the analysis of geostrophic degeneracy performed in \cite{dalibard2018mathematical} for a different problem (degeneracy of 2D Munk boundary  layers near the northern and southern coasts of an oceanic basin).
However, no general methodology exists for such problems, and hence we leave this issue aside in the present paper.

\end{remark}

\paragraph{Plan of the paper}
In Section \ref{sec:heuristique}, we derive the quasi-geostrophic (QG) equation, and give some general explanations on the structure of the solution. Then, in Section \ref{sec:QG}, we perform a detailed analysis of the QG equation and we provide a thorough description of the  geostrophic part of the solution. 
Afterward, we explain in Section \ref{sec:correcteurs} how to construct non-geostrophic correctors.  
Next, in Section \ref{sec:Ekman}, we study the two types of Ekman layers obtained under the $f$-plane hypothesis, that finally allow us to build a solution at any order and to prove the convergence results in \ref{sec:proof-thm}.

\paragraph{Notation} Throughout the paper, we will use the shorthands $\Delta_\nu$ for the diffusion operator $\nu_h \Delta_h + \nu_3 \p_3^2$, and $s$ for $\sin \alpha$, $c$ for $\cos \alpha$. 
We will also denote by $\nueff $ the viscosity coefficient in the direction normal to the boundary, i.e. $\nueff= \nu_h \sin^2\alpha + \nu_3 \cos^2\alpha$. The Fourier transform in the tangential variables $x,y$ {and in the time variable $t$} will be denoted either by $\widehat{~}$ or by $\cF$.


\section{Strategy for building an approximate solution} \label{sec:heuristique}

The purpose of this section is to present the approximate resolution method for the Boussinesq system given in the introduction, which is recalled here with $\delta^2 = \varepsilon$ for convenience:
\begin{subequations}\label{NS-rho-bis}
\begin{align}
\p_t u + \frac{1}{\varepsilon} (1 + \varepsilon \beta y)e_3 \wedge u + \frac{1}{\varepsilon}\begin{pmatrix}\nabla_h p\\ \varepsilon^{-1} \p_3 p \end{pmatrix}  - \nu_h \Delta_h u - \nu_3 \p_3^2 u=&\ \frac{1}{\varepsilon^2}\begin{pmatrix}
    0\\ 0\\ - \rho
\end{pmatrix}+\beta f\quad \text{in }\Om,   \label{L1NSrho}\\
\p_t \rho - \frac{1}{\varepsilon } u_3=&\ 0\quad \text{in }\Om,   \label{L2NSrho}\\
\Div u=&\ 0\quad \text{in }\Om,   \label{L3NSrho}\\
u\vert _{\p\Om}=&\ 0.    \label{L4NSrho}
\end{align} 
\end{subequations}

More precisely, we will present the articulation between the terms of different orders and of different natures. First, we will derive the QG equation and explain how to compute $u^0$. Then we will present how to compute the corrective term $u^1$. Finally we will show what are the role and the structure of the Ekman-type boundary layer term.

\subsection{Derivation of the quasi-geostrophic equation for the stream function}\label{sec:equation-fonction-courant}

To derive a quasi-geostrophic equation, we plug the first two terms of the asymptotic expansion \eqref{ansatz-intro-k} (\textit{i.e.} we take $K=1)$ into system \eqref{NS-rho}. This is a well-known method, see for example the textbook \cite{Vallis17}, which we recall  below for clarity.

Plugging the order $0$ term of \eqref{ansatz-intro-k} in \eqref{L1NSrho} ensures that, considering the order $\frac 1 \varepsilon$ for the horizontal part and $\frac{1}{\varepsilon\delta ^2}$ for the vertical part, the following equalities hold:\begin{subequations}
	\begin{align}
		(u_h^0)^\perp& = -\nabla_h p^0 \label{u-zero-h} \\
		\p_3 p^0 & = - \rho^0 . \label{d-3-p}
	\end{align}
\end{subequations}
The first relationship corresponds to geostrophic equilibrium, and the second is the hydrostatic approximation.

Geostrophic equilibrium \eqref{u-zero-h} ensures that $\nabla_h \cdot u^0_h = 0$. Therefore $\p_3 u_3^0 = 0$; now, since we look for finite energy solutions, we also have $\lim_{x_3\to \infty } u_3^0=0$. It thus follows that 
 $u_3^0$ vanishes everywhere, so $u^0$ is only horizontal. Furthermore, since $u^0$ has zero divergence, it derives from a stream function, that is clearly $p^0$. Notice that the vertical variation of $p^0$ is governed by $\rho^0$.

\begin{remark}
	We have justified that with an Ansatz such as \eqref{ansatz-intro-k}, the $u^0$ term is geostrophic. Hence, it corresponds in fact to $u_{\mathrm{i}}^0 + u_{\mathrm{M}}^0$, as explained in the introduction. 
    We will identify the interior and Munk terms below.
    Moreover, the $O(\varepsilon)$ terms will have a non-vanishing vertical velocity and introduce a non-geostrophic correction.
\end{remark}

Our next goal is  to obtain a closed equation for $p^0$. As is classical for penalization problems, this is achieved by taking into account the next order term in the asymptotic expansion and projecting the system on the space of constraints associated with the penalization, i.e. \eqref{u-zero-h}, \eqref{d-3-p}.
We start by plugging \eqref{ansatz-intro-k} into the horizontal part of \eqref{L1NSrho}, then apply the operator $\nabla_h^\perp\cdot(*)$, and noting $\zeta^0 = \nabla_h^\perp \cdot u_h^0$, we find the following equation for the terms of order $\varepsilon^0$ in the horizontal component of \eqref{L1NSrho}:
\be \label{zeta_h} \p_t\zeta^0 +\nabla_h^\perp \cdot\bigg( (u_h^1)^\perp + \beta y (u_h^0)^\perp \bigg) - \nu_h \Delta_h \zeta^0 - \nu_3 \p_3^2 \zeta^0 =\beta  \nabla_h^\perp \cdot f_h. \ee

At the same time, taking the vertical derivative   of \eqref{L2NSrho} at order $1$ (that is, $\varepsilon^0$), we find 
\be\label{dtd3rho}
\p_t\p_3 \rho^0 -  \p_3 u_3^1 =0
\ee
and we can now take the difference \eqref{zeta_h} - \eqref{dtd3rho}  to obtain

$$\underbrace{\p_t\zeta^0 - \p_t\p_3 \rho^0}_{\p_t \Delta p^0}+ \underbrace{\nabla_h^\perp \cdot  (u_h^1)^\perp + \p_3u^1_3}_{ = \nabla\cdot u^1 = 0}+ \underbrace{\nabla_h^\perp\cdot (\beta y (u_h^0)^\perp )}_{\beta \p_1p^0} - (\underbrace{\nu_h \Delta_h \zeta^0 + \nu_3 \p_3^2 \zeta^0}_{\Delta_\nu \Delta_h p^0}) =\beta  \nabla_h^\perp \cdot f_h, $$
where we have replaced $\rho^0$ by $-\p_3 p^0$ thanks to \eqref{d-3-p}. This gives the following equation on $p^0$ in the full domain $\Om$
\be \label{systeme_interieur} \boxed{\p_t\Delta p^0 +\beta \p_1 p^0 -\Delta_\nu \Delta_h p^0 = \beta\nabla_h^\perp \cdot f_h.} \ee
Variants of this equation, with or without the $\beta$ effect or an advection term, have already been studied abundantly in the literature, and we refer in particular to \cite{desjardins1998derivation,charve2005convergence} for its derivation, \cite{dalibard2018mathematical} for an analysis of the linear 2D case with a $\beta$ effect, \cite{puel2015global} for an analysis of the inviscid case (without the $\beta$ effect, and with a flat topography).
The study of this equation is performed in Section \ref{sec:QG}, but we present here some of the results that we will get to construct the solution of \eqref{systeme_interieur}.

First, considering the geometry of the problem, we can apply the Fourier transform in the tangential directions
$x$ and $y$ and in time. We get a simple ordinary differential equation in $z$, with coefficients depending on the Fourier variables and the physical parameters introduced. 
Classically, the solutions to this ODE are obtained as the sum of a particular solution to the non-homogeneous equation (with  source term $\beta \nabla_h^\perp \cdot f_h$), and a well-chosen solution to the homogeneous problem, that will allow us to satisfy some convenient boundary conditions. 

The interior term $u_{\mathrm{i}}^0=\nabla_h^\bot p_{\mathrm{i}}^0$ is then precisely identified as a particular solution to the non-homogeneous equation, defined as the convolution of the source term with   a Green function which we will compute explicitly (see Lemma \ref{def:Green-visc}). At main order, it coincides with the Sverdrup transport, i.e.
\be\label{svredrup} p_{\mathrm{i}}^0 \simeq -\int_{x_1}^\infty \nabla_h^\bot \cdot f_h(x_1',x_2,x_3) \dd x_1'.\ee

Moreover, the homogeneous part is a solution of $\p_t\Delta p +\beta \p_1 p -\Delta_\nu \Delta_h p = 0$.
After moving once again to Fourier variables in $x,y,t$,  we can look for solutions as $Ce^{-\mu z}$, with $C$ independent of $z$, and $\Re(\mu) >0$ (to ensure that the solutions have finite energy). Computations are detailed in \cref{prop:QG3D-visc}. 
Let us mention that when $\sin \alpha<0$ (western boundary), we find two admissible values of  $\mu$, so that the solution is expected to be the linear combination of two decaying exponentials $e^{-\mu_1 z}$ and $e^{-\mu_2 z}$, with $\Re(\mu_i)\gg 1$. 
This is consistent with the usual theory of Munk boundary layers near western coasts, see \cite{pedlosky1996ocean,desjardins1998derivation,dalibard2018mathematical}, which we generalize to the case with topography and stratification. 
The coefficients before the exponentials, depending on $\om$ and $\xi=(\xi_x,\xi_y)$ then ensure that $u^0$ vanishes on the boundary. More precisely, the structure of $u_{\mathrm{M}}^0$ is
$$\widehat{u_{\mathrm{M}}^0}= \widehat{\nabla_h^\perp p_{\mathrm{M}}^0} = \sum_{j\in \{1,2\}} c_j^0(\xi,\om)e^{-\mu_j z} \begin{pmatrix}
    -i\xi_y\\
    c i \xi_x+ s\mu_j
\end{pmatrix},$$
where we have used that $\p_1 (e^{i\xi_x x-\mu z})= (ci \xi_x+ s\mu)e^{i\xi_x x-\mu z} $. We choose $c_i^0$ so that
\be\label{def:cj0}
 \sum_{j\in \{1,2\}} c_j^0(\xi,\om) \begin{pmatrix}
    -i\xi_y\\
    c i \xi_x+ s\mu_j
\end{pmatrix}= - \widehat{u_{\mathrm{i},h}^0}\vert_{z=0}.
\ee
With this choice, we find that $c_i^0=O(1)$, and that $\|u_{\mathrm{M},2}^0\|_\infty=O(\mu)\gg 1 $. Therefore we retrieve the intensification of western boundary currents.

For further purposes, we note that $p^0=\p_n p^0=0$ on $\p \Om$.

\subsection{Determination of the first corrective term}

Let us now explain how to determine $u^1$ in terms of $u^0$. 
We use the same method as before, but we write \eqref{ansatz-intro-k} with $K=2$.
By plugging a higher order ansatz in \eqref{NS-rho}, we obtain a more exhaustive hierarchy of equations.
Computations similar to the ones of the previous 
subsection (that will be developed in Section \ref{sec:correcteurs}) give an equation for $u^1$ in terms of $u^0$, which writes 
\be\label{u1hh} u^1_h  = - \beta f_h^\perp - L^1 \nabla_h p^0  + \nabla_h^\perp p^1 ,\qquad u^1_3=\p_t \rho^0=-\p_t \p_3 p^0, \ee
where $L^1 v_h :  = \p_t v_h + \beta y v_h^\perp -\Delta_\nu v_h$ for $v_h= (v_1,v_2)$.

Then, plugging this equation into the hierarchy of equations deduced from the ansatz results in a scalar equation on $p^1$ with a similar structure to \eqref{systeme_interieur}, namely
\be \label{eqsurcorrecteurh}
(\p_t \Delta + \beta \p_1 -\Delta_\nu \Delta_h) p^1 = \nabla_h^\perp \cdot L^1 \bigg( \beta f_h^\perp + L^1 \nabla_h p^0 \bigg).
\ee

As before, we can determine a particular solution of this equation, which we denote by $\bar p^1$, by convoluting the right-hand side with the Green function associated with \eqref{systeme_interieur}. We denote by $\bar u^1$ the associated velocity. Since $p^0$ is a combination of interior and boundary layer terms, this structure will be transferred to $\bar p^1$, although we do not explicit this decomposition here.
Unfortunately the velocity $\bar u^1$ does not vanish on the boundary \textit{a priori}. Hence we need to construct further boundary layer correctors to lift the trace of $\bar u^1$.

We now make the following remark: although $\bar u^1_3=-\p_t \p_3 p^0$ is non zero, its trace vanishes on the boundary since  $p^0=\p_n p^0=0$ on $\p \Om$.
Thus $\bar u_1$ is purely horizontal at $z=0$. As a consequence the construction of boundary layers within $u^1$ is essentially the same as for $u^0$: the first order corrector will be of the form
\be \label{decompo-munk-1}
u^1_h=\bar u^1_h + \nabla_h^\bot \mathcal F^{-1}\left( \sum_{j\in \{1,2\}} c_j^1 e^{-\mu_j z}\right),\quad u^1_3=\bar u^1_3,
\ee
and the coefficients $c_j^1$ are determined by the conditions
\be \label{coeffc1}
 \sum_{j\in \{1,2\}} c_j^1(\xi,\om) \begin{pmatrix}
    -i\xi_y\\
    c i \xi_x+ s\mu_j
\end{pmatrix}= - \widehat{\bar u^1_h}\vert_{z=0}.
\ee
It follows that the Ekman layer at order one also vanishes since there is no vertical velocity to balance at $z=0$.

However, we emphasize that contrarily to $p^0$, $\p_3 p^1$ does not vanish on the boundary in general.
Hence, for $k\geq 2$, $\bar u^k_3$ does not vanish on the boundary, where we have generalized the notation above to higher order terms.
This vertical component cannot be lifted by Munk boundary layers, which remain purely horizontal. This is precisely where Ekman correctors come into play.

\subsection{Determination and role of Ekman boundary layers}

The construction proposed in the two previous subsections can be further generalised to higher order Ansatz, giving $u_{\mathrm{i}}^k$ depending on $u^l_{\mathrm{i}}$ for $0 \leq l \leq k-1$. Similarly, we can decompose the Munk-type term as \eqref{decompo-munk-1}, i.e. 
$$u_{\mathrm{M}}^k = \overline{u}_{\mathrm{M}}^k+ \begin{pmatrix}
    \nabla_h^\bot \mathcal F^{-1}\left( \sum_{j\in \{1,2\}} c_j^k e^{-\mu_j z}\right)\\0
\end{pmatrix},$$
with $\overline{u}_{\mathrm{M}}^k$ to be determined as a function of $\overline{u}_{\mathrm{M}}^l$ for $0\leq l \leq k-1$, and coefficients $c_j^k$ ensuring that the horizontal component of $u^k$ vanishes.

Ekman-type terms will also be written as a superposition of a term $\overline{u}_{\mathrm{E}}^k$ derived from the lower order Ekman terms, and a term that corresponds to a boundary layer with a coefficient 
and a structure to be determined: this is the subject of this subsection. 

In order to compute the structure of the Ekman boundary layer term, we assume that we can neglect the $\beta$ term in \eqref{NS-rho}, and we write the system in local coordinates. Then, we apply a Fourier transform in $x,y$ and $t$, and we look for wave solutions decaying like $\exp(-\lambda z)$ (so the $\p_x, \p_y$ become Fourier symbols and $\p_z$ becomes $-\lambda$). We obtain a linear system of the form $\mathcal A \mathsf{U}=0$, where $\mathcal A $ is a matrix depending on $\lambda$ (see \eqref{ekman_lin}),  and $\mathsf{U}$ is the wave amplitude of the solution in the different variables ($u_x,u_y,u_z,\rho,p)$. To allow this system to have non trivial solutions, we need to find values of $\lambda$ that cancel the determinant of $\mathcal A $. This results in two different sizes of Ekman boundary layers, with two eigenvectors denoted by $\mathsf{U}^1$ and $\mathsf U^2$ (provided these two eigenvectors are admissible, which is not the case).

We will prove in section \ref{sec:Ekman} that the eigenvector $\mathsf{U}^2$ is associated to a geostrophic velocity field at  main order, 
and therefore should not be taken into account. This was mentioned in \cref{sec:strategy-intro}, and will be detailed in \cref{sec:Ekman}. As a consequence, we are left with only three degrees of freedom with the boundary layer coefficients (two degrees of freedom for Munk layers, one for the remaining Ekman eigenvector ${\mathsf{\tilde U}^1}$, where the notation tilde stands for the projection on the three velocity components),  to match three scalar boundary conditions. 
Another important feature is that the vertical component of $\mathsf{\tilde U}^1$ is bounded away from zero, which will allow the Ekman boundary layer to lift the vertical component of the solution.
These two aspects strongly differ from classical works on Ekman layers above flat surfaces, in which the vertical component has size $\sqrt{\varepsilon\nu_3}$, \cite{Pedlosky-GFD,CDGG}.

Finally, the boundary condition writes 
\be\label{boundarycondition}\bigg(u_{\mathrm{i}}^k + \overline{u}_{\mathrm{M}}^k+ \begin{pmatrix}
    \nabla_h^\bot \mathcal F^{-1}\left( \sum_{j\in \{1,2\}} c_j^k e^{-\mu_j z}\right)\\0
\end{pmatrix} + \overline{u}_{\mathrm{E}}^k + \mathcal F^{-1}(c^k_{\mathrm{E}} e^{-\lambda_1 z} {\mathsf{ \tilde U}^1} )  \bigg)\vert_{\p \Omega}= 0. \ee
The role of the coefficient $c^k_{\mathrm{E}}$ in front of the Ekman boundary layer term is then to balance the vertical part of the solution, namely $\bigg(u_{\mathrm{i}}^k + \overline{u}_{\mathrm{M}}^k + \overline{u}_{\mathrm{E}}^k \bigg)\cdot e_3$.
Once this coefficient is determined, we compute  the $c_j^k$ coefficients by inverting a $2\times 2$ matrix as for the coefficients $c^0_j$ and $c^1_j$.

\begin{remark}\label{rem:fixed-point-VS-iteration}
	The Munk boundary layers have all the same size, that is, the roots associated to these boundary layers do not depend on $k$, and the same fact holds for Ekman layers.

    Therefore, there are two possible constructions for the boundary layers, one is iterative, and one relies on a fixed point method. 
    \begin{itemize}
        \item The iterative method is the one presented in the aforementioned construction. 
        If we want to compute the value of  the effective coefficient in front of the boundary layer (either  Ekman or Munk) for the entire solution at order $k$, we need to sum all $(c^l_j)_{l=0}^k$ for Munk boundary layers, and all $(c^l_{\mathrm{E}})_{l=2}^k$ for Ekman boundary layer. The advantages of this method are, on the one hand, that it is entirely explicit and, on the other hand, that it provides a practical solution for any order. It will be the point of view adopted in the whole paper.
        \item The fixed point method is slightly more abstract, and relies strongly on the fact that the shape of the boundary layers does not depend on the order to which they are computed. Heuristically,
        this would consist of constructing only the successive inner parts of the form $\bigg(u_{\mathrm{i}}^k + \overline{u}_{\mathrm{M}}^k + \overline{u}_{\mathrm{E}}^k \bigg)$, then finding the coefficient of each boundary layer once and for all. However, since each $\overline{u}^k$ depends on the coefficients $c_j, c_{\mathrm{E}}$, such a method is difficult to implement in practice.
    \end{itemize}
    
\end{remark}

The next sections are dedicated to the rigourous construction of approximate solutions, following the ideas given in this section.

\section{Analysis of the quasi-geostrophic equation}
\label{sec:QG}
The goal of this section is to analyze the  equation
\be
\label{eq:QG3D-visc}
i\om \Delta \psi + \beta \p_1 \psi - (\nu_h \Delta_h + \nu_3 \p_3^2) \Delta_h \psi= S \quad \text{in } \Om,
\ee
which corresponds to \eqref{systeme_interieur} to which a Fourier transform has been applied in time.
More precisely, we will construct solutions of \eqref{eq:QG3D-visc} and
 analyze their asymptotic behaviour in some parameter regimes slightly more general than the ones described in \eqref{hyp:parameters}. At this stage, we do not specify boundary conditions on $\p \Om$.
Our purpose is twofold:
\begin{itemize}
    \item We will define a Green function associated with \eqref{eq:QG3D-visc} and analyze its asymptotic behaviour as $\eps\to 0$ under the regime considered 
    (see \cref{def:Green-visc});
    \item We will also construct generic decaying solutions of the homogeneous equation associated with \eqref{eq:QG3D-visc}. We shall see that these solutions have a boundary layer behaviour: they are exponentially small outside a region of very small width, depending on $\eps$, and located in the vicinity of $\p\Om$.
    
\end{itemize}
Let us now give a bit more details about our strategy.
Since equation \eqref{eq:QG3D-visc} has constant coefficients and $\p \Om$ is flat, it is natural to apply the Fourier transform in the tangential variables $x$ and $y$, using the relationships $\p_1 = c\p_x-s\p_z$ and $\p_3 = c\p_z+s\p_x$. The equation then becomes an ODE in $z$, namely
\begin{equation}
    \label{ODE-QG3D-visc}
    \ba 
 i\om (\p_z^2 - |\xi|^2) \hat \psi &+ \beta (c i \xi_x- s \p_z) \hat \psi\\ &- \left(\nu_h \left((c i \xi_x- s \p_z)^2 - \xi_y^2\right) + \nu_3 (is\xi_x + c\p_z)^2 \right) \left((c i \xi_x- s \p_z)^2 - \xi_y^2\right) \hat \psi = \hS.
 \ea
    \end{equation}
Classically, solutions of this ODE are the sum of a specific solution and of the general solution to the associated homogeneous problem, which is in turn a linear combination of decaying exponentials.

We then have the following result on the roots of the associated characteristic equation:
\begin{lemma}
\label{lem:characteristic}
Let $\om \in \R$, $\beta, \nu_h, \nu_3\in (0, + \infty)$, $\xi \in \R^2$, $(\xi_y, \om)\neq (0,0)$. Consider the characteristic equation
\begin{equation}
    \label{eq:characteristic}
 i\om (\mu^2 - |\xi|^2) + \beta (c i \xi_x+ s \mu) - \left(\nu_h \left((c i \xi_x+ s \mu)^2 - \xi_y^2\right) + \nu_3 (is\xi_x - c\mu)^2 \right) \left((c i \xi_x+ s \mu)^2 - \xi_y^2\right)  =0.
\end{equation}
Assume  that $\sin \alpha<0$ (western boundary) and  $\xi_x\neq 0$.
The following results hold:

\begin{enumerate}
    \item Equation \eqref{eq:characteristic} has two complex roots with positive real parts, denoted by $\mu^+_1$ and $\mu^+_2$, and two complex roots with negative real parts, denoted by  $\mu^-_1$ and $\mu^-_2$. Without loss of generality, we assume that $\Re(\mu^\pm_1)\geq \Re(\mu^\pm_2)$.

\item Assume that\footnote{Recall that $\nueff= \nu_h \sin^2\alpha + \nu_3 \cos^2\alpha$.} $\nueff\ll \beta$, $|\om|\lesssim \beta^{2/3} \nueff^{1/3}$ and $| \xi | \ll \beta^{1/3} \nueff^{-1/3}$.  Then for $j=1,2$,
\[
\mu^+_j\sim \left(\frac{\beta}{\nueff}\right)^{1/3} M^+_j,\ \mu^-_2 \sim \left(\frac{\beta}{\nueff}\right)^{1/3} M^-_2,
\]
where $M^+_1, M^+_2$ and $M^-_2$ are the three roots of the polynomial
\be\label{eq:Munk-rescaled}
i \frac{\om}{\beta^{2/3} \nueff^{1/3}} M + s  - s^2 M^3=0,
\ee
such that $\Re (M_2^-)< 0$, while
\[
\mu^-_1 \sim - i \frac{c\xi_x}{s}.
\]

\end{enumerate}

\end{lemma}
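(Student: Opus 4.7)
After full expansion, equation \eqref{eq:characteristic} is a polynomial in $\mu$ of degree four with leading coefficient $-\nueff s^2\neq 0$, hence has exactly four roots in $\bC$. My plan is to locate all four roots by asymptotic analysis in the regimes of items (2)--(3); the counting in item (1) then comes as a byproduct. The general recipe is: rescale $\mu$ according to the expected dominant balance, identify the limiting polynomial, apply Rouch\'e's theorem on small discs around each simple nonzero limit root to track the $O(1)$ roots, and treat any ``small'' root separately via a Newton-type correction.

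\textbf{Regime (2).} The natural scale is $\Lambda := (\beta/\nueff)^{1/3}$. Setting $\mu = \Lambda M$ and dividing by $\beta^{4/3}\nueff^{-1/3}$, the contributions $\nueff s^2\mu^4$, $i\om\mu^2$ and $\beta s\mu$ become $O(1)$, while the remaining terms are shown to be $o(1)$ using $|\om|\lesssim \beta^{2/3}\nueff^{1/3}$, $|\xi|\ll \Lambda$ and $\nueff\ll\beta$. The limiting polynomial factors as $M\bigl(s^2 M^3 - i\tilde\om M - s\bigr)$ with $\tilde\om := \om/(\beta^{2/3}\nueff^{1/3})$; its three nonzero roots are the simple roots of \eqref{eq:Munk-rescaled}, which Rouch\'e lifts to three roots $\mu^+_1,\mu^+_2,\mu^-_2 \sim \Lambda M^\pm_j$ of the full polynomial. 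For the remaining root, assumed to satisfy $|\mu|\ll\Lambda$, I would set $\mu_0 := -ic\xi_x/s$, the unique value of $\mu$ annihilating the factor $ci\xi_x + s\mu$ that carries the dominant $\beta$ and viscous contributions. A direct evaluation yields $P(\mu_0) = -i\om(\xi_x^2/s^2 + \xi_y^2) - \nu_h\xi_y^4 - \nu_3\xi_x^2\xi_y^2/s^2$ and $P'(\mu_0) = \beta s + \text{l.o.t.}$, so Newton's correction gives $\mu^-_1 = \mu_0 - P(\mu_0)/P'(\mu_0) + \text{l.o.t.}$, which is indeed of much smaller modulus than $\mu_0$. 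The sign count in item (1) is then read off: at $\tilde\om = 0$ the cubic reduces to $M^3 = 1/s < 0$, whose roots split into one negative real and a complex conjugate pair of positive real part (preserved by continuity for $|\tilde\om|\lesssim 1$), while the Newton correction above has real part proportional to $1/(\beta s) < 0$.

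\textbf{Regime (3).} The same strategy applies with two complementary scales. The pair $\{\mu^+_1,\mu^-_2\}$ emerges from the balance $\nueff s^2\mu^4 \sim i\om\mu^2$, i.e.\ $\mu^2\sim i\om/(\nueff s^2)$, giving the two opposite roots $\pm |\om/(\nueff s^2)|^{1/2}e^{i\sgn\om\,\pi/4}$; the bound $|\om|\ll\beta^{3/4}\nueff^{1/4}$ precisely ensures that $\beta s\mu$ is a genuine perturbation on these Rouch\'e discs. The root $\mu^+_2$ comes from the complementary balance $i\om\mu^2 + \beta s\mu \approx 0$, yielding $\mu^+_2 \sim -\beta s/(i\om)$; substituting back into the viscous quartic and expanding to the next order would produce the claimed $\Re(\mu^+_2) \sim -\nueff s^5\beta^3/\om^4$. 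Finally $\mu^-_1 \sim -ic\xi_x/s$ is recovered by the same residue computation as in regime (2).

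\textbf{Main obstacle.} The delicate step is the large root $\mu^+_2$ in regime (3): its leading approximation $-\beta s/(i\om)$ is purely imaginary, so its real part is determined only by the next-order correction, and I will have to verify that among all the terms discarded at leading order none contributes to $\Re(\mu^+_2)$ at the same magnitude as the viscous one. The upper bound $|\om|\ll\beta^{3/4}\nueff^{1/4}$ in the hypotheses of item (3) is designed to enforce exactly this.
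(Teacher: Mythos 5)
Your treatment of items (2) and (3) follows essentially the same route as the paper: rescale $\mu = (\beta/\nueff)^{1/3}M$ (resp.\ use the two complementary balances), identify the limiting polynomial, and lift its simple roots back to roots of the full quartic. The paper phrases this as a quantitative implicit-function-theorem argument rather than Rouch\'e, but that is a matter of taste, and your direct evaluation of $P(\mu_0)$ for $\mu_0 = -ic\xi_x/s$ is correct. You also correctly flag the real part of $\mu_2^+$ as the delicate computation.

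The genuine gap is item (1). It is stated for \emph{all} $\om\in\R$, $\beta,\nu_h,\nu_3>0$, $\xi\in\R^2$ with $(\xi_y,\om)\neq(0,0)$, $\xi_x\neq 0$, $\sin\alpha<0$ --- not only in the asymptotic regimes of items (2)--(3). Deriving the count ``as a byproduct'' of those regimes therefore does not prove it. Moreover, even within regime (2), your phrase ``preserved by continuity for $|\tilde\om|\lesssim 1$'' hides the real work: continuity of roots in the parameters only guarantees a constant sign count if one additionally shows that no root ever crosses the imaginary axis along the deformation path. The paper does precisely this: assuming $\mu_j\in i\R$ and using that the first two terms of the characteristic polynomial are purely imaginary while the viscous term is real, it separates the equation into two real identities, observes that each factor of the viscous term is a sum of negatives of squares, and deduces $\xi=0$ or $(\xi_y,\om)=(0,0)$, both excluded. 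Combined with an explicit count at a convenient parameter (e.g.\ $\nu_h=\nu_3=1$, $\beta=\om=0$, where the four roots are $(\pm\xi_y - ic\xi_x)/s$ and $\pm|\xi|$), this gives item (1) globally. You need some version of this no-imaginary-roots argument; without it your continuity claim is unsupported, and with it you would not need the asymptotics at all. A secondary issue: your ``Newton correction has real part $\propto 1/(\beta s)<0$'' only identifies the sign when $\xi_y\neq 0$ (if $\xi_y=0$ your $P(\mu_0)$ is purely imaginary, and the real part comes from a higher-order term), so the argument as written does not cover the allowed case $\xi_y=0$, $\om\neq 0$.
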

Lemma \ref{lem:characteristic} will be proved in Section \ref{sec:proof-characteristic}.

\begin{remark}
   The regime depicted in the second item of the Lemma corresponds to Assumption \eqref{hyp:parameters}, but actually we can push a little further the assumptions on the parameters and prove the following result:
Assume that $\nueff\ll \beta$ and that $ \beta^{2/3} \nueff^{1/3} \ll | \om | \ll \beta^{3/4} \nueff^{1/4}$, $| \xi| \ll \beta^3 \nueff |\om|^{-4}$.
Then
\[
\mu^+_1\sim \left| \frac{\om}{\nueff s^2}\right|^{1/2} e^{i\sgn \om \pi/4},\quad \mu^-_2\sim - \mu^+_1
\]
while $\mu^-_1\sim -ic\xi_x /s$ as above and
\[
\mu^+_2 \sim - \frac{\beta s}{i \om},\quad \Re( \mu^+_2) \sim - \frac{\nueff s^5 \beta^3}{\om^4}\gg 1.
\]

The proof of this statement is left to the reader, since it follows from the same arguments as \cref{lem:characteristic} but will not be used in the rest of the paper.

\label{rem:regime}   
\end{remark}

Using the definition of the roots $(\mu^\pm_i)_{i=1,2}$, we define the Green function associated with equation \eqref{eq:QG3D-visc}:
\begin{lemma}[Green function]
    \label{def:Green-visc}
Let $\om \in \R$, $\beta, \nu_h, \nu_3\in (0, + \infty)$ satisfying the assumptions of \cref{lem:characteristic}.
Define the distribution $G$ as $G(\cdot, z)=\mathcal F^{-1}(\widehat{ G}(\xi, z) )$, where
\[
\cG(\xi, z)=
\begin{cases}
\sum_{i=1,2} C_i^+ \exp(-\mu_i^+ z)\text{ for }z>0,\\
\sum_{i=1,2} C_i^- \exp(-\mu_i^- z)\text{ for }z<0,
\end{cases}
\]
with
\[
C_j^\pm = \mp \frac{1}{\nueff s^2}\prod_{\mu \in \mathcal M\setminus \{\mu_j^\pm\}} \frac{1}{\mu - \mu_j^\pm }. 
\]

    Then $ G$ is the Green function associated with associated with equation \eqref{eq:QG3D-visc}: in the sense of distributions,
    \[
    i \om \Delta G + \beta \p_1 G -(\nu_h \Delta_h + \nu_3 \p_3^2 )\Delta_h G= \delta(z=0).
    \]

\end{lemma}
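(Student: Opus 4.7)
The plan is to reduce to a fourth-order ODE by Fourier transform in the tangential variables $x,y$, and then to identify the Green function as a piecewise sum of exponentials whose coefficients are fixed by the natural transmission (jump) conditions at $z=0$. Applying the tangential Fourier transform to $LG=\delta(z=0)$ (where $L$ is the operator on the left-hand side of the equation) gives the ODE \eqref{ODE-QG3D-visc} with right-hand side $\delta_0(z)$, independent of $\xi$. Expanding the operator shows that the coefficient of $\p_z^4$ is $-s^2\nueff$, so the associated characteristic polynomial is $-s^2\nueff\,q(\mu)$, where $q(\mu):=\prod_{\mu'\in\mathcal M}(\mu-\mu')$ and $\mathcal M=\{\mu_1^+,\mu_2^+,\mu_1^-,\mu_2^-\}$ are the four roots from \cref{lem:characteristic}.

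Next I set up the ansatz. Away from $z=0$ the ODE is homogeneous, so $\hat G(\xi,z)$ is a linear combination of the exponentials $e^{-\mu z}$, $\mu\in\mathcal M$. Requiring decay as $z\to+\infty$ forces only $\mu_1^+,\mu_2^+$ on $\{z>0\}$, and analogously $\mu_1^-,\mu_2^-$ on $\{z<0\}$; this gives the expression in the statement with four unknown coefficients $C_j^\pm$. Because the operator is of order four in $\p_z$ with leading coefficient $-s^2\nueff$ and the lower-order parts contain at most three $\p_z$, the source $\delta_0(z)$ is equivalent to the four conditions
\[
\sum_{j} C_j^+ (\mu_j^+)^k \;=\; \sum_{j} C_j^- (\mu_j^-)^k \quad (k=0,1,2), \qquad \sum_{j} C_j^+ (\mu_j^+)^3 - \sum_{j} C_j^- (\mu_j^-)^3 \;=\; \frac{1}{s^2 \nueff}.
\]

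Finally I verify that the proposed formula solves this linear system. Rewriting $\prod_{\mu'\in\mathcal M\setminus\{\mu_j^\pm\}}(\mu'-\mu_j^\pm)=-q'(\mu_j^\pm)$ yields the equivalent expression $C_j^\pm=\pm 1/\bigl(s^2\nueff\, q'(\mu_j^\pm)\bigr)$. Plugged into the system above, the identities reduce to the Lagrange-interpolation relations
\[
\sum_{\mu\in\mathcal M} \frac{\mu^k}{q'(\mu)} \;=\; 0 \quad (k=0,1,2), \qquad \sum_{\mu\in\mathcal M} \frac{\mu^3}{q'(\mu)} \;=\; 1,
\]
which hold since $q$ is monic of degree four with simple roots. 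The only step that is not purely algebraic is the simplicity of the roots, which underlies both the ansatz (no polynomial prefactors needed) and the non-vanishing of $q'(\mu_j^\pm)$; this is the one point that requires care, but it follows in each of the regimes of \cref{lem:characteristic} from the explicit asymptotic behaviours of the four roots, which are of different orders of magnitude (respectively $\sim(\beta/\nueff)^{1/3}$, $\sim|\om/\nueff|^{1/2}$ and $\sim-ic\xi_x/s$), hence distinct for $\eps$ small enough.
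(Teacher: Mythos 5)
Your proof is correct and takes essentially the same approach as the paper: the piecewise-exponential ansatz with jump conditions $[\p_z^k \cG]_{z=0}=0$ ($k=0,1,2$) and $[\p_z^3 \cG]_{z=0}=-1/(s^2\nueff)$, resolved via the Lagrange-interpolation structure of the associated Vandermonde-type system. The paper \emph{derives} $C_j^\pm$ by explicitly inverting that system through the interpolation map $\Phi^{-1}$, while you \emph{verify} the same formula via the standard identities $\sum_{\mu\in\mathcal M}\mu^k/q'(\mu)=0$ for $k\le 2$ and $=1$ for $k=3$; this is the same structural observation, merely derivation versus verification.
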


We are now ready to state our result on equation \eqref{eq:QG3D-visc}:

\begin{proposition}[Solutions of the 3D viscous QG equation]
Let $\om \in \R$, $\beta, \nu_h, \nu_3\in (0, + \infty)$ such that $| \om| \lesssim \beta^{2/3} \nueff^{1/3}$.
Let $S\in L^2(\R^2_{x,y}, L^1_z)$. We assume that there exists $R>0$ such that 
$
R\ll \left(\frac{\beta}{\nueff}\right)^{1/3} 
$
and such that
$\hS(\xi, z)=0$ for a.e. $|\xi|\geq R$, $z>0$.

\begin{enumerate}
    \item Define $\psi_S:= G\ast S$, i.e.
    \be\label{def:psif}
    \widehat{ \psi}_S(\xi,z)=\int_0^\infty \widehat{ G}(\xi, z-z') \hS(\xi, z')\dd z'.
    \ee
Then $\psi_S\in L^2(\R^2_{x,y}, C_0(0, +\infty))$ and $\psi_S$ is a solution of \eqref{eq:QG3D-visc} in the sense of distributions, with
\[
\| \psi_S\|_{L^2(\R^2, L^\infty(\R_+))} \lesssim \frac{1}{\beta} \| S\|_{L^2(\R^2, L^1(\R_+))},
\]
and more generally, for any $s\geq 0$,
\[
\| \psi_S\|_{H^s(\R^2, L^\infty(\R_+))} \lesssim \frac{1}{\beta} \| S\|_{H^s(\R^2, L^1(\R_+))}.
\]

Furthermore, if $\psi\in  L^2(\R^2_{x,y}, C^1_0(0, +\infty))$ is any solution of \eqref{eq:QG3D-visc} such that $\widehat{ \psi}(\xi, z)=0$ for $|\xi| \geq R$, then there exist $c_i\in L^2(\R^2)$ such that
\[
\widehat{ \psi} (\xi, z)= \widehat{ \psi}_S (\xi, z) + \sum_{i\in \{1,2\}} c_i (\xi) \exp(-\mu_i^+ z).
\]

\item Assume that  \eqref{hyp:parameters} is satisfied.
Then
\[
\lim_{\eps \to 0} \beta( \psi_S- \overline{\psi}_S)=0,
\]
where
\[
\beta \p_1 \overline{\psi}_S = S, \quad \lim_{z\to + \infty }\overline{\psi}_S=0.
\]

\end{enumerate}

    \label{prop:QG3D-visc}
\end{proposition}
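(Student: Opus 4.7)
The approach rests on the explicit form of the Green function in \cref{def:Green-visc} together with the root asymptotics from \cref{lem:characteristic}. For the first part, the key step is the uniform bound $|C_j^\pm(\xi)| \lesssim \beta^{-1}$ for $|\xi| \leq R$. The characteristic polynomial \eqref{eq:characteristic} has leading coefficient $\nueff s^2$; by \cref{lem:characteristic}, three of its four roots are of size at least $(\beta/\nueff)^{1/3}$ while the remaining root $\mu_1^-$ is of order $|\xi| \ll (\beta/\nueff)^{1/3}$. A direct Vieta computation (equivalently, estimating each factor $\mu - \mu_j^\pm$ appearing in the defining product) yields $|C_j^\pm| \lesssim (\nueff s^2)^{-1}(\beta/\nueff)^{-1} = \beta^{-1}$, and the analogous analysis in the second regime of the lemma gives the same bound. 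Since $|e^{-\mu_j^\pm z}| \leq 1$ on the appropriate half-line, this produces $\|\cG(\xi, \cdot)\|_{L^\infty(\R)} \lesssim \beta^{-1}$. Young's inequality in $z$ applied to $\widehat{\psi}_S(\xi, z) = (\cG(\xi, \cdot) \ast \hat S(\xi, \cdot))(z)$ then yields the pointwise bound $\|\widehat{\psi}_S(\xi, \cdot)\|_{L^\infty_z} \lesssim \beta^{-1}\|\hat S(\xi, \cdot)\|_{L^1_z}$, and Plancherel in $\xi$ completes the $L^2$ estimate; the $H^s$ version follows identically after inserting $\langle\xi\rangle^s$. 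That $\psi_S$ solves \eqref{eq:QG3D-visc} distributionally follows directly from the defining relation for the Green function, while the continuity and decay in $z$ of $\widehat{\psi}_S(\xi, \cdot)$ are immediate from the closed-form expression.

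For the uniqueness assertion, the difference $\psi - \psi_S$ has Fourier support in $\{|\xi| \leq R\}$ and, for each fixed $\xi$, satisfies the homogeneous fourth-order ODE in $z$ whose characteristic polynomial is \eqref{eq:characteristic}. The general solution is the linear span of the four exponentials $e^{-\mu_j^\pm z}$; the requirement $\psi \in L^2_{x,y} C^1_0(0,\infty)$ forces decay as $z \to +\infty$, eliminating the modes associated with the roots $\mu_j^-$ of negative real part (which grow like $e^{|\Re\mu_j^-|z}$) and leaving exactly the claimed form $\sum_{i=1,2} c_i(\xi)\, e^{-\mu_i^+ z}$.

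For the second part, I would decompose $\widehat{\psi}_S(\xi, z) = A^{\mathrm{si}}(\xi, z) + A^{\mathrm{BL}}(\xi, z)$, where
\begin{equation*}
A^{\mathrm{si}}(\xi, z) := \int_z^\infty C_1^-\, e^{\mu_1^-(z'-z)}\hat S(\xi, z')\, dz'
\end{equation*}
isolates the ``sub-inertial'' contribution of the small root $\mu_1^- \sim -ic\xi_x/s$, while $A^{\mathrm{BL}}$ collects the three contributions associated with the ``fast'' roots $\mu_1^+,\mu_2^+,\mu_2^-$, all of size $(\beta/\nueff)^{1/3}$ under \eqref{hyp:parameters}. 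Applying Vieta's relations to the reduced cubic \eqref{eq:Munk-rescaled} gives $C_1^- \to 1/(\beta s)$; combined with $\mu_1^- \to -ic\xi_x/s$ and the exponential decay of $\hat S$ furnished by (H3), dominated convergence shows that $\beta \cF^{-1}_{x,y}(A^{\mathrm{si}})$ converges to $\beta \overline{\psi}_S$, where $\overline{\psi}_S$ is the Sverdrup solution characterized by $\beta\p_1 \overline{\psi}_S = S$ with vanishing limit as $z \to +\infty$. For $A^{\mathrm{BL}}$, the identity $\|e^{-\mu_j^\pm z}\|_{L^2_z(\R_+)} \sim (\Re\mu_j^\pm)^{-1/2} \sim (\beta/\nueff)^{-1/6}$ combined with $|C_j^\pm| \lesssim \beta^{-1}$ and Young's inequality yields $\beta \|A^{\mathrm{BL}}(\xi,\cdot)\|_{L^2_z} \lesssim (\beta/\nueff)^{-1/6}\|\hat S(\xi, \cdot)\|_{L^1_z}$, which tends to zero under \eqref{hyp:parameters}. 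Reassembling the two pieces yields $\beta(\psi_S - \overline{\psi}_S) \to 0$ in $L^2(\Om)$.

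The principal difficulty, beyond the book-keeping, is the uniform control of the constants $C_j^\pm$ across both regimes of \cref{lem:characteristic}: in the intermediate regime $\beta^{2/3}\nueff^{1/3} \ll |\om| \ll \beta^{3/4}\nueff^{1/4}$, the roots $\mu_1^+$ and $\mu_2^-$ become of comparable modulus, so one must verify that the relevant differences $\mu - \mu_j^\pm$ in the Vieta denominators remain bounded away from zero and do not inflate $|C_j^\pm|$ beyond the threshold $\beta^{-1}$ that all subsequent estimates rely on.
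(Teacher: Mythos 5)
Your proposal follows the paper's proof essentially step by step: the same $|C_j^\pm|\lesssim\beta^{-1}$ bound feeding a pointwise-in-$\xi$ Young convolution estimate, the same ODE argument for uniqueness, and the same decomposition of $\widehat{\psi}_S$ into the slow ($\mu_1^-$) contribution and the three fast-root contributions for the Sverdrup limit, where the paper phrases the slow part by showing $\beta(ci\xi_x - s\p_z)\widehat{\psi}_S\sim\hS$ rather than directly exhibiting $A^{\mathrm{si}}\to\widehat{\overline{\psi}}_S$, and controls the fast part by pointwise dominated convergence rather than an $L^2_z$-Young bound. The only small omission is the justification that $c_1,c_2\in L^2(\R^2)$ in the uniqueness statement (the paper deduces it from the $L^2_{x,y}$ regularity of the traces of $\psi-\psi_S$ and $\p_z(\psi-\psi_S)$ at $z=0$), and the ``principal difficulty'' you flag concerning the intermediate regime is indeed the right thing to check but is already dispatched by the computation preceding the proposition, which shows $C_1^+,C_2^-=O(\nueff^{1/2}|\om|^{-3/2})\ll\beta^{-1}$ there.
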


\begin{remark}\label{rem:dissymetry-EW}
\cref{lem:characteristic} and \cref{prop:QG3D-visc} generalize to a 3D setting well-known results for the Munk model, which is a 2D version of \eqref{eq:QG3D-visc}, see for instance \cite{desjardins1999homogeneous,dalibard2018mathematical}. Note that $\overline{\psi}_S$ satisfies the so-called Sverdrup equilibrium.
In our 3D setting, the relevant viscosity parameter for boundary layer theory is $\nueff=\nu_h s^2 + \nu_3 c^2$, which is the diffusion coefficient in the direction which is normal to the boundary.

When $s>0$ (eastern boundaries), an analysis similar to the proof of \cref{lem:characteristic} shows that 
$\mu_2^+ \sim -ic \xi_x/s$. In the regime considered, there is only one root with large and positive real part, namely $\mu_1^+$.
As a consequence, in this case, we still have $\beta (G\ast S- \overline{\psi}_S)\to 0$, but $ \overline{\psi}_S$ satisfies $\beta \p_1 \overline{\psi}_S = S$ and $\overline{\psi}_S(z=0)=0$.
We therefore retrieve the well-known dissymetry between western and eastern boundaries, which explains the intensification of boundary currents near the western coasts of oceanic basins, since $u_h=\nabla_h^\bot \psi$.

We emphasize however that when $|\om| \gg \beta^{3/4}\nueff^{1/4} $ and $\sin \alpha<0$, the real part of $\mu_2^+$ becomes small. In this regime, the dissymetry between western and eastern coasts may dissapear.
To illustrate this, let us look at the polynomial given by equation \eqref{eq:Munk-rescaled}. First note that the polynomial has the form $M^3 -ipM +q$ with 
$p=\om/(\beta^{2/3} \nueff^{1/3}s^2)$ and $q=-1/s$, so that
$p,q>0$ if $\sin(\alpha)<0$ and $\omega>0$. Therefore, we can perform the change of variable $M\mapsto M p^\frac12 := \widetilde M$, which leads to the study of roots of the polynomial $\widetilde{M}^3-i\widetilde M + \tilde q$, with $\tilde q=-s^2 \beta \nueff^{1/2}/\omega^{3/2}$. Thus, we can plot the roots as in \cref{fig:roots}.

\begin{figure}[h]
    \centering
    \includegraphics[width=0.45\textwidth]{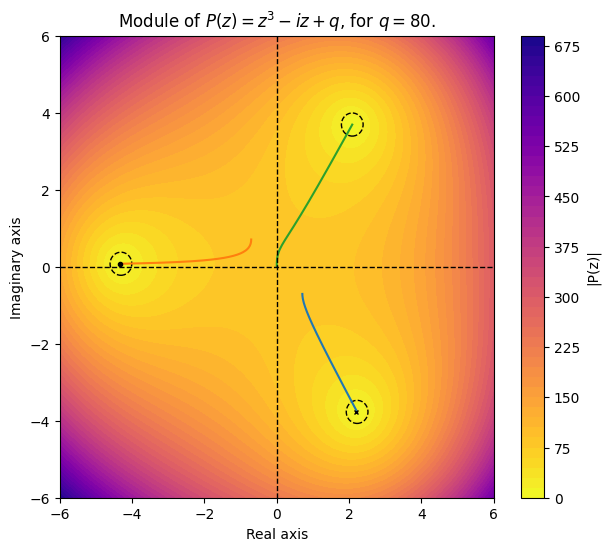}
    \hfill
    \includegraphics[width=0.45\textwidth]{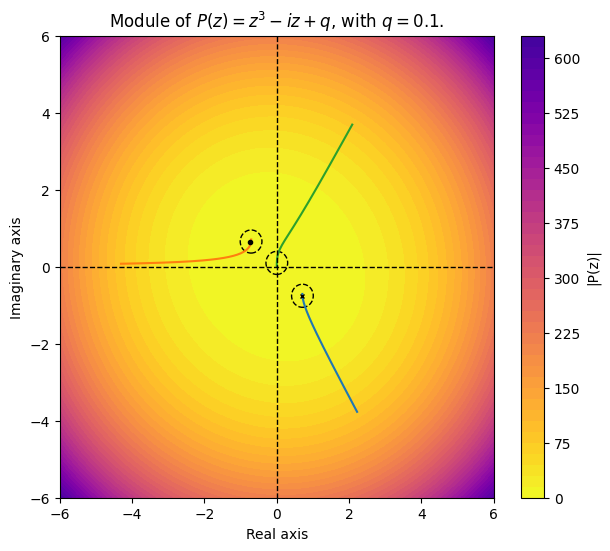}
    \caption{Module of the studied polynomial, and curves described by the three roots as $\tilde q$ vary. The three roots are plotted for two different values of $\tilde q$. The orange curve corresponds to $M_2^-(\tilde q)$, the blue one to $M_1^+(\tilde q)$ and the green one to $M_2^+(\tilde q)$.}\label{fig:roots}
\end{figure}
On the left of \cref{fig:roots}, we are in a regime where $\vert \om \vert \ll \beta^\frac23 \nueff^\frac13$, that is point 2 of Lemma 3.1, and we recoover the usual Munk roots that give two boundary layers on West boundary, and therefore we have a dissymetry between East boundary and West boundary. On the right, we are no longer in the regimes covered by Lemma \ref{lem:characteristic} or \cref{rem:regime} (we have $\beta^\frac34 \nueff^\frac14 \lesssim \vert \om \vert$), and we find that the real part of $\mu_2^+$ becomes small.

\end{remark}

\subsection{Proof of Lemma \texorpdfstring{\ref{lem:characteristic}}{on characteristic roots}}
\label{sec:proof-characteristic}

Let $\Lambda=(\om, \xi_x,\xi_y, \alpha, \beta, \nu_h, \nu_3)\in \R^7$ such that $\nu_h,\nu_3>0$, $\sin \alpha\neq 0$, $(\xi_y,\om)\neq (0,0)$. The polynomial equation \eqref{eq:characteristic}, which we write as $P(\Lambda ; \mu)=0$ always has four complex roots $\mu_i(\Lambda)$. 
If $P(\Lambda_0,\mu_0)=0$, and if $\Lambda$ lies in the neighborhood of $\Lambda_0$, we may look for a root $\mu$ in the neighborhood of $\mu_0$  with the help of the implicit function theorem. The latter can be applied as long as $d_\mu P(\Lambda_0,\mu_0)$ is invertible, i.e. as long as $\mu_0$ is a simple root of $P(\Lambda_0, \cdot)$.
This proves that
the roots $\mu_i(\Lambda)$ are $C^\infty$ in $\Lambda$ on the set $\{\Lambda\in \R^7, \mu_i(\Lambda)\neq \mu_j(\Lambda) \text{ for } i\neq j\}$.
Classical arguments also ensure that the curves $\Lambda\mapsto \mu_j(\Lambda)$ are continuous. Hence  the number of roots with positive real part is independent of $\Lambda$: indeed, if this number depended on $\Lambda$, then by the intermediate value theorem there would exist $\Lambda_0\in \R^7$ and $j\in \{1,\cdots 4\}$ such that $\mu_j(\Lambda_0)\in i\R$. In this case, taking the real and imaginary parts of \eqref{eq:characteristic}, we would have simultaneoulsy
\begin{align*}
 i\om (\mu_j^2 - |\xi|^2) + \beta (c i \xi_x+ s \mu_j) &=0,\\
\left(\nu_h \left((c i \xi_x+ s \mu_j)^2 - \xi_y^2\right) + \nu_3 (is\xi_x - c\mu_j)^2 \right) \left((c i \xi_x+ s \mu_j)^2 - \xi_y^2\right)&=0.
    \end{align*}
The second line implies that one of the two factors is zero. Since each factor is a sum of (opposites of) squares of real numbers, this only occurs when $\xi=0$, which is excluded, or when $\xi_y=0$ and $ci\xi_x + s\mu_j=0$. In the latter case, plugging the equality into the first line, we obtain $\om=0$. Since we have assumed that $(\xi_y,\om)\neq (0,0)$, there are no pure imaginary roots.

There remains to prove that there are two roots with positive real parts and two with negative real parts. 
To that end, in view of the above discussion, it suffices to count the number of roots with positive real parts for specific values of the parameters. Take $\nu_3=\nu_h=1$ and $\beta=\om=0$. Then it can be easily checked that the roots are given by
\[
\left\{ \frac{\xi_y - i c\xi_x}{\sin \alpha}, \frac{-\xi_y - i c\xi_x}{\sin \alpha}, | \xi|, - | \xi|\right\}.
\]
The first point of \cref{lem:characteristic} follows.

Let us now turn towards the next two assertions. Setting first
\[
\mu= \left(\frac{\beta }{\nueff}\right)^{1/3} M, 
\]
we find that $\mu$ is a solution of \eqref{eq:characteristic} if and only if $M$ is a solution of $Q(\Lambda, M)=0$, where
\begin{align*}
Q(\Lambda, M) &= M^4 + a_3 M^3 + a_2 M^2 + a_1 M + a_0\\
&=\widetilde{Q}(\Lambda, M) + Q_{\mathrm{rem}}(\Lambda, M),
\end{align*}
where the coefficients $a_i$ are such that
\begin{align*}
a_3= O\left(\xi_x\nueff^{1/3} \beta^{-1/3}\right), \quad a_2= - \frac{i\om}{\beta^{2/3} \nueff^{1/3} s^2} + O\left(|\xi|^2\nueff^{2/3} \beta^{-2/3}\right),\\
a_1=-\frac{1}{s} + O\left(|\xi|^3\nueff \beta^{-1}\right),\quad a_0= -i c\xi_x s^{-2} (\nueff/\beta)^{1/3}+ O\left(\om \nueff^{1/3} \beta^{-4/3} |\xi|^2 + |\xi|^4 \nueff^{4/3} \beta^{-4/3}\right),   
\end{align*}
and
\[
\widetilde{Q}(\Lambda, M) = M^4   - \frac{i\om}{\beta^{2/3} \nueff^{1/3}s^2} M^2 -\frac{1}{s} M .
\]
Note that in the regime $\nueff\ll \beta$ and $|\om| \lesssim \beta^{2/3} \nueff^{1/3}\ll \beta$, $| \xi| \ll (\beta/\nueff)^{1/3}$, all the coefficients of the polynomial $ Q_{\mathrm{rem}}(\Lambda,M)$ are $o(1)$, while the ones of the polynomial $\widetilde{Q}(\Lambda, M)$ are $O(1)$. Now, let $\widetilde{M}$ be a root of $\widetilde{Q}$. We look for a root of $Q$ in the form $M=\widetilde{M} + \eta$, with $\eta \ll 1$ (note that we include the possibility that $\widetilde{M}=0$). 
At main order, this gives
\[
\underbrace{\mathrm{d}_M \widetilde{Q}(\Lambda, \widetilde{M}) (\eta)}_{=\left( 4 \widetilde{M}^3 -  \frac{2i\om}{\beta^{2/3} \nueff^{1/3}s^2} \widetilde{M} - \frac{1}{s}\right) \eta } +\  Q_{\mathrm{rem}}(\Lambda, \widetilde{M})=o(\eta).
\]
It can be checked that $\mathrm{d}_M \widetilde{Q}(\Lambda, \widetilde{M}) $ is invertible at each root $\widetilde{M}$ of $\widetilde{Q}$.
A variant of the implicit function theorem  then implies that $Q(\Lambda, \cdot)$ has a root $M$ in the vicinity of $\widetilde{M}$, with $\eta =M-\widetilde{M}= O(Q_{\mathrm{rem}}(\Lambda, \widetilde{M}) )$.
This provides the desired equivalent for $\mu_i^+$ and $\mu_2^-$, which correspond to the non-zero roots of $\widetilde{Q}$. For $\mu_1^-$, we take $\widetilde{M}=0$, and we observe that
\[
\mathrm{d}_M \widetilde{Q}(\Lambda, \widetilde{M}) (\eta)=-\frac{\eta}{s}.
\]
It follows that when $\xi_x\neq 0$ and $\xi_x/|\xi| \gtrsim 1$,
\[
\mu_1^-\sim  \left(\frac{\beta }{\nueff}\right)^{1/3}s  Q_{\mathrm{rem}}(\Lambda,0)\sim -\frac{ic\xi_x}{s}.
\]
\qed

\begin{remark}
In the two regimes described in \cref{lem:characteristic} and \cref{rem:regime}, the eigenvalues $\mu_j^\pm$ are well-separated. 
In the regime $|\om| \propto \beta^{2/3} \nueff^{1/3}$ and $|\xi| \propto (\beta/\nueff)^{1/3} $, it does not seem obvious to prove that the eigenvalues do not cross, although numerical simulations seem to indicate that they remain simple. Since we wish to exclude such pathological situations, we will always assume that $\vert\xi\vert $ lies in a compact set (whose size depends on $\varepsilon$).
\label{rem:eigenvalue-crossing}
\end{remark}

\subsection{Construction of the Green function}
\label{ssec:Green}
We look for the function $\cG$ in the form
\[
\cG(\xi, z)=
\begin{cases}
\sum_{i=1,2} C_i^+ \exp(-\mu_i^+ z)\text{ for }z>0,\\
\sum_{i=1,2} C_i^- \exp(-\mu_i^- z)\text{ for }z<0.
\end{cases}
\]
Since $\cG$ should satisfy the ODE \eqref{ODE-QG3D-visc} with $\hS=\delta(z=0)$, this provides the following jump conditions:
\[
[\p_z^k \cG]_{|z=0}=0\text{ for }k=0,1,2,\quad [\p_z^3 \cG]_{|z=0}= -\frac{1}{s^2\nueff}.
\]

The coefficients $C_i^\pm$ are therefore determined by the linear system
\begin{equation}
    \label{systemegreen}
    \begin{pmatrix}
        1&-1&1&-1\\ 
        \mu_1^+ &- \mu_1^- &\mu_2^+ &- \mu_2^-  \\
        (\mu_1^+)^2 &-(\mu_1^-)^2  &(\mu_2^+)^2 &- (\mu_2^-)^2 \\
        (\mu_1^+)^3 &- (\mu_1^-)^3 &(\mu_2^+)^3  &-(\mu_2^-)^3
    \end{pmatrix} 
    \begin{pmatrix}
        C_1^+\\C_1^-\\C_2^+\\C_2^-
    \end{pmatrix}
    =
    \begin{pmatrix}
        0\\0\\0\\\frac{1}{\nueff s^2}
    \end{pmatrix}.
\end{equation}
Note that if two of the eigenvalues coincide for some value of the parameters $\nu_h, \nu_3, \om, \xi$, the system is not invertible. Therefore, throughout this paragraph, we will assume that the parameters satisfy the assumptions of \cref{lem:characteristic} (see also \cref{rem:eigenvalue-crossing}).

The matrix in the left-hand side of \eqref{systemegreen} is the (transpose of the) matrix of the application
$$\Phi : \begin{cases}
    \bC_3[X] \longrightarrow \bC^4\\ 
    P\ \ \ \quad \mapsto \  \ (P(\mu_1^+), -P(\mu_1^-),P(\mu_2^+),-P(\mu_2^-))
\end{cases}$$
in the canonical bases of $ \bC_3[X] $ and $\bC^4$. 
Hence we need to invert the application $\Phi$. To that end, we rely on Lagrange interpolation polynomials.  Let $\mathcal M:=\{\mu_1^+, \mu_1^-, \mu_2^+, \mu_2^-\}$.
For $j\in \{1,2\}$, we set
\[
P_j^\pm(X)= \prod_{\mu \in \mathcal M\setminus \{\mu_j^\pm\}} \frac{X-\mu}{(\mu_j^\pm - \mu)}
\]
so that $P_j^\pm\in\bC_3[X] $, $P_j^\pm(\mu)=\delta_{\mu, \mu_j^\pm} $ for $\mu\in \mathcal M$.
Then $\Phi^{-1}$ is the mapping
\[
\Phi^{-1}: 
\begin{array}{ccc}
    \bC^4 &\longrightarrow& \bC_3[X] \\ 
    (a_1^+, a_1^-, a_2^+, a_2^-)& \mapsto &\sum_{j,\pm} \pm a_j^\pm P_j^\pm.
\end{array}
\]
In order to find the invert of the matrix in the left-hand side of \eqref{systemegreen}, it suffices to write the (transpose of the) matrix of $\Phi^{-1}$ in the canonical bases of $\bC^4$ and $ \bC_3[X] $.
More precisely, given the form of the right-hand side of \eqref{systemegreen}, we are only interested in the last column of this matrix, which consists of the leading order coefficients of $(\pm P_j^{\pm})$. Eventually, we infer that
\[
C_j^\pm = \pm \frac{1}{\nueff s^2}\prod_{\mu \in \mathcal M\setminus \{\mu_j^\pm\}} \frac{1}{(\mu_j^\pm - \mu)}. 
\]
Hence we obtain the formula announced in \cref{def:Green-visc}.

Let us also derive some estimates on the coefficients $C_j^\pm$ in the  regime $| \om| \ll \beta^{2/3} \nueff^{1/3}$, $|\xi| \ll (\beta/\nueff)^{1/3}$.
The roots
$\mu_1^+, \mu_2^+, \mu_2^-$ are such that
\[
|\mu| \gtrsim \left(\frac{\beta}{\nueff}\right)^{1/3},\quad |\mu-\mu'| \gtrsim \left(\frac{\beta}{\nueff}\right)^{1/3}\text{ for }\mu\neq \mu'\in \{ \mu_1^+,\mu_2^-,\mu_2^+ \}. 
\]
It follows that $|C_j^\pm| \lesssim \beta^{-1}$. 
We also note that the coefficients $C_j^\pm$ are independent of $\xi$ at main order.

\subsection{Proof of Proposition \texorpdfstring{\ref{prop:QG3D-visc}}{3.3}}

It follows from the previous Section that, in the regime considered here,
\[
| \cG (\xi, z)| \lesssim \beta^{-1}.
\]
Therefore, for a.e. $\xi, z$,
\[
| \widehat{\psi}_S(\xi, z)| \lesssim \beta^{-1} \int_0^\infty | \hS(\xi, z')| \dd z'.
\]
The $L^2(L^\infty)$ estimates on $\psi_S$ follow, together with the $H^s(L^\infty)$ estimates after a mere mutliplication by the factor $\langle \xi\rangle^s$. 
The property $\lim_{z\to \infty} \psi_S(\cdot, z)=0$ is a straightforward consequence of the Lebesgue dominated convergence theorem. 
Noticing that $| \p_z \cG|\lesssim \beta^{-1} \max\limits_{j,\pm} | \mu_j^\pm|$, we also find that $\p_z \psi_S \in L^2(\R^2_{x,y}, L^\infty_z)$.

Now, let $\psi\in L^2(\R^2, C_0(0, +\infty))$ be a solution of \eqref{eq:QG3D-visc} in the sense of distributions. Passing to Fourier variables in $\mathcal S'(\R^2)$, we infer that $\widehat{\psi}-\widehat{\psi}_S$ is a solution of \eqref{ODE-QG3D-visc}, with a null right-hand side. We then infer from classical ODE theory that
\[
(\widehat{\psi}-\widehat{\psi}_S)(\xi, z)=\sum\limits_{j=1,2} c_j(\xi) \exp(-\mu_j^+(\xi) z).
\]
Since $(\widehat{\psi}-\widehat{\psi}_S)\vert_{z=0}\in L^2(\R^2)$ and $\p_z (\widehat{\psi}-\widehat{\psi}_S)\vert_{z=0}\in L^2(\R^2)$, we deduce that $c_1, c_2\in L^2(\R^2)$.

We now separate $\widehat{\psi}_S$ into
\[\ba
\widehat{\psi}_S(\xi, z) &= \int_z^\infty C_1^- \exp(-\mu_1^- (z-z')) \hS (\xi, z')\dd z'\\
& + \sum\limits_{j=1,2}\int_0^z C_j^+ \exp(-\mu_j^+ (z-z')) \hS (\xi, z')\dd z' + \int_z^\infty C_2^- \exp(-\mu_2^- (z-z')) \hS  (\xi, z')\dd z'.
\ea
\]
We recall that $\beta C_j^\pm =O(1)$. Furthermore, in the regime we consider, $\Re(\mu_j^+)\gg 1$ and $-\Re(\mu_2^-)\gg 1$. Hence we deduce easily from the Lebesgue dominated convergence theorem that the three integrals involving $\mu_1^+, \mu_2^+$ and $\mu_2^-$ vanish pointwise as $\nueff\to 0$ and $\beta, |\om| \to \infty$ with $|\om| \lesssim \beta^{2/3} \nueff^{1/3}$.

There remains to consider the integral involving $\mu_1^-$. First, we note that in the scaling considered here, $|\mu_1^-| \ll |\mu|$ for $\mu \in \mathcal M\setminus \{\mu_1^-\}$, so that
\[
C_1^-\sim  \frac{1}{\nueff s^2} \prod_{\mu \in \mathcal M\setminus \{\mu_1^-\}} \frac{1}{\mu} \sim \frac{1}{\beta s},
\]
where we used the fact the product of the  three non-zero roots $M_1^+$, $M_2^+$ and $M_2^-$ of $\widetilde{Q} $ is equal to $1/s$.

It follows that
\[
\beta  s \widehat{\psi}_S(\xi, z) \sim  \int_z^\infty \exp(-\mu_1^- (z-z')) \hS  (\xi, z')\dd z',
\]
and therefore
\[
\beta (c i \xi_x- s \p_z) \widehat{\psi}_S \sim \hS  \sim \beta \widehat{\p_1 \overline{\psi}_S}.
\]
The result follows. \qed

\subsection{Additional estimates on \texorpdfstring{$G\ast S$}{the convolution of G and S}}

When we will perform the error estimates in \cref{sec:proof-thm}, we will need some further estimates on functions of the form $G\ast S$, in particular when the source term $S$ has exponential decay.
\begin{lemma}
Let $S\in L^2(\R^2, L^1(\R_+))$. We assume that there exist $C,\gamma>0$ such that for any $z>0$,
\[
\| S(\cdot, z)\|_{L^2(\R^2)}\leq C e^{-\gamma z}.
\]
We assume furthermore that $\gamma\leq \Re(\mu_j^+)/2$ for $j=1,2$.
Then
\[
\| G\ast S (z)\|_{L^2(\R^2)} \lesssim \frac{1}{\beta \gamma} e^{-\gamma z}.
\]
    \label{lem:exp-Green}
\end{lemma}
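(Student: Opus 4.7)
The plan is to pass to Fourier variables in $(x,y)$ and exploit the explicit exponential structure of the Green function from \cref{def:Green-visc}. By Plancherel, it suffices to control $\|\widehat{G\ast S}(\cdot,z)\|_{L^2_\xi}$, where
\[
\widehat{G\ast S}(\xi,z) = \int_0^\infty \cG(\xi,z-z')\hS(\xi,z')\dd z'.
\]
I would split this integral at $z'=z$ and insert the two-piece formula for $\cG$: on $\{z'<z\}$ only the $+$ exponentials appear, and on $\{z'>z\}$ only the $-$ ones, giving four scalar contributions of the form $\int C_j^\pm e^{-\mu_j^\pm(z-z')}\hS(\xi,z')\dd z'$. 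Each of these I would estimate by Minkowski's integral inequality in $L^2_\xi$, using the uniform bound $|C_j^\pm(\xi)|\lesssim \beta^{-1}$ established in Section~\ref{ssec:Green}.

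For the two \emph{near} terms ($z'<z$), the hypothesis $\gamma\leq \Re(\mu_j^+)/2$ yields the pointwise kernel bound $|\cG(\xi,z-z')|\lesssim \beta^{-1}e^{-2\gamma(z-z')}$ uniformly in $\xi$, so Minkowski together with the exponential decay $\|\hS(\cdot,z')\|_{L^2}\lesssim e^{-\gamma z'}$ reduces the estimate to a direct computation of
\[
\frac{1}{\beta}\int_0^z e^{-2\gamma(z-z')}e^{-\gamma z'}\dd z' \;\lesssim\; \frac{e^{-\gamma z}}{\beta\gamma},
\]
matching the target bound.

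For the two \emph{far} terms ($z'>z$), the key observation is that $\Re(\mu_j^-)<0$ for $j=1,2$, so that $|e^{-\mu_j^-(z-z')}|\leq 1$ when $z'>z$. One then simply uses this crude bound on the kernel and lets the exponential decay of the source do the work:
\[
\left\|\int_z^\infty C_j^- e^{-\mu_j^-(z-z')}\hS(\cdot,z')\dd z'\right\|_{L^2_\xi} \;\lesssim\; \frac{1}{\beta}\int_z^\infty e^{-\gamma z'}\dd z' \;=\; \frac{e^{-\gamma z}}{\beta\gamma}.
\]
Summing the four contributions yields the claimed estimate.

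The main subtle point, and what I would flag as the only real obstacle, is the $\mu_1^-$ mode in the far part: since $\mu_1^-\sim -ic\xi_x/s$ is essentially purely imaginary, the Green function does not decay in $|z-z'|$ along this mode, which rules out any argument relying on kernel decay alone. This is precisely why the exponential decay assumption on $S$ is present in the hypotheses: it substitutes for the missing decay of the kernel in that mode, at the cost of one factor $1/\gamma$. The $\mu_2^-$ mode, whose real part has large absolute value, could in fact be handled more sharply, but the crude bound above already suffices for the stated estimate.
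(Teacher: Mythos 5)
Your proof is correct and follows essentially the same route as the paper's: pass to Fourier in $(x,y)$, split the convolution at $z'=z$, invoke the uniform bound $|C_j^\pm|\lesssim\beta^{-1}$ from Section~\ref{ssec:Green}, and bound the resulting scalar exponential integrals. The only cosmetic difference is that for the near terms you replace $\Re(\mu_j^+)$ by $2\gamma$ before integrating, while the paper evaluates $\int_0^z e^{-\Re(\mu_j^+)(z-z')-\gamma z'}\dd z'$ exactly and then bounds the denominator $\Re(\mu_j^+)-\gamma\geq\gamma$; both exploit the factor of $2$ in the hypothesis $\gamma\leq\Re(\mu_j^+)/2$ in an equivalent way, and your closing remark about the non-decaying $\mu_1^-$ mode is a valid and useful explanation of why the exponential decay of $S$ cannot be dropped.
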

\begin{proof}
We recall that
\[
\widehat{G\ast S}(\xi, z) = \sum\limits_{j=1,2} C_j^+\int_0^z e^{-\mu_j^+ (z-z')} \hS(\xi,z')\dd z' + \sum\limits_{j=1,2} C_j^-\int_z^{\infty} e^{-\mu_j^- (z-z')} \hS(\xi,z')\dd z'.
\]
Therefore
\[
\| G\ast S (z)\|_{L^2(\R^2)} \leq C \beta^{-1} \sum\limits_{j=1,2} \int_0^z e^{-\Re(\mu_j^+)(z-z') - \gamma z'}\dd z' + \int_z^{\infty} e^{-\Re(\mu_j^-)(z-z') - \gamma z'}\dd z'.
\]
For the second integral, we simply have, recalling that $\Re(\mu_j^-)\leq 0$,
\[
\int_z^\infty e^{-\Re(\mu_j^-)(z-z') - \gamma z'}\dd z'\leq \frac{1}{\gamma - \Re(\mu_j^-)} e^{-\gamma z} \leq \frac{1}{\gamma} e^{-\gamma z}.
\]
For the first integral, we observe that the assumption on $\gamma$ ensures that 
 $\gamma\notin \{\Re(\mu_1^+), \Re(\mu_2^+)\}$. Hence the estimate is similar to the one of the first integral and we find
\[
\int_0^z e^{-\Re(\mu_j^+)(z-z') - \gamma z'}\dd z'\leq \frac{e^{-\gamma z }- e^{-\Re(\mu_j^+) z}}{\Re(\mu_j^+)-\gamma} \leq \frac{1}{\gamma} e^{-\gamma z}.
\]

\end{proof}

In the construction of an approximate solution, we will often refer to quantities of the type $G\ast S$, where $S$ is a smooth function with bounded gradient, as an ``interior term'', by opposition to ``Munk boundary layer terms'' of the type $\mathcal F^{-1} (\sum_j c_j e^{-\mu_j^+ z})$. However one should keep in mind that $G \ast S$ contains a boundary layer term of lower order, as the following result shows:
\begin{lemma}
Let $k\geq 0$, and let $S\in L^2(\R^2_{x,y}, W^{1, k+1}(\R_+))$.
Then $G\ast S\in L^2(\R^2, C^{k+1}(\R))$, and for all $z\geq 0$,
\begin{align*}
  \p_z^{k+1} (G\ast S)(z)=& G \ast \p_z^{k+1} S (z)+ \sum_{m=0}^k \p_z^{k-m}G(\cdot,z)\ast_{x,y}\p_z^m S(\cdot,z=0 )\\
  =& G \ast \p_z^{k+1} S (z)+ \sum_{m=0}^k \sum_{j=1}^2 \mathcal{F}^{-1} \left( C_j^+ (-\mu_j^+)^{k-m} \widehat{\p_z^m S}(\xi, 0) e^{-\mu_j^+ z}\right).
\end{align*}

In particular,
\[
\ba
\| \p_z^{k+1} (G\ast S)(z)\|_{L^2(\R^2)}\lesssim &\; \frac{1}{\beta} \Big( \| \p_z^{k+1} S\|_{L^2((\R^2), L^1(\R_+))} \\&\; + \sum_{m=0}^k\sum_{j=1}^2 | \mu_j^+|^{k-m}\| \p_z^m S(\cdot, 0)\|_{L^2(\R^2)}  e^{-\mu_j^+z} \Big).
\ea
\]
\label{lem:CL-Gstar}
\end{lemma}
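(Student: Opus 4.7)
My plan is to establish the formula by induction on $k$, reducing everything to a base case ($k=0$) proved by a single integration by parts in $z'$ on the Fourier-side integral representation of $G\ast S$. Passing to Fourier variables in $(x,y)$, we write
\[
\widehat{G\ast S}(\xi, z) = \int_0^\infty \widehat G(\xi, z-z')\, \hS(\xi, z')\,\dd z'.
\]
By the jump conditions used to construct $G$ in Section~\ref{ssec:Green}, $[\p_z^j G]_{z=0}=0$ for $j=0,1,2$, so $z'\mapsto \widehat G(\xi, z-z')$ is $C^2$ across $z'=z$, which justifies differentiating under the integral and substituting $\p_z \widehat G(\xi,z-z') = -\p_{z'}\widehat G(\xi,z-z')$. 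Integrating by parts in $z'$ then yields
\[
\p_z \widehat{G\ast S}(\xi,z) = \widehat G(\xi,z)\,\hS(\xi,0) + \widehat{G\ast \p_z S}(\xi,z),
\]
the boundary term at $z'=\infty$ vanishing by the exponential decay of $\widehat G$ (since $\Re(\mu_j^-)<0$ by Lemma~\ref{lem:characteristic}) together with $\hS(\xi,\cdot)\in W^{1,1}(\R_+)$, which forces $\hS(\xi,z')\to 0$ at infinity. Inverse Fourier transforming gives the base case of the identity.

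For the inductive step from $k$ to $k+1$, I would differentiate the formula at order $k$ once in $z$, noting that for $z>0$ each $\p_z^{k-1-m}G(\cdot,z)$ is smooth in $z$ (as a linear combination of exponentials), and then apply the base case to the resulting term $G\ast \p_z^{k+1} S$. The extra contribution $G(\cdot,z)\ast_{x,y} \p_z^{k+1}S(\cdot,0)$ is precisely the $m=k+1$ term in the enlarged sum, which completes the induction. The regularity statement $G\ast S\in L^2(\R^2, C^{k+1}(\R))$ then follows from the formula, since each boundary term is continuous in $z>0$ and $G\ast \p_z^{k+1}S$ is continuous in $z$ by dominated convergence.

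For the $L^2$ estimates, the interior term $G\ast \p_z^{k+1} S$ is controlled by $\beta^{-1}\|\p_z^{k+1} S\|_{L^2(\R^2, L^1(\R_+))}$ by applying Proposition~\ref{prop:QG3D-visc} (or rather the pointwise-in-$z$ bound established in its proof) to $\p_z^{k+1} S$. For each boundary term, I would use the explicit Fourier representation
\[
\widehat{\p_z^{k-m} G}(\xi,z) = \sum_{j=1,2} C_j^+ (-\mu_j^+)^{k-m} e^{-\mu_j^+ z},\qquad z>0,
\]
combined with the uniform bound $|C_j^+|\lesssim 1/\beta$ from Section~\ref{ssec:Green}, and Plancherel to convert the $(x,y)$-convolution norm into a pointwise Fourier product. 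The main subtlety — rather than a real obstacle — is to interpret the bound $|\mu_j^+|^{k-m} e^{-\mu_j^+z}$ appearing in the statement: since $\mu_j^+$ depends on $\xi$, one should read this as a supremum in $\xi$ over the asymptotic regime of Lemma~\ref{lem:characteristic}, which is finite and of the expected order of magnitude thanks to the precise asymptotics of $\mu_j^+$ derived there.
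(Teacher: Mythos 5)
Your proof is correct and follows essentially the same route as the paper's: on the Fourier side, establish the $k=0$ identity by a single integration by parts in $z'$ (relying on the jump conditions $[\p_z^j G]_{z=0}=0$ for $j\leq 2$ and the exponential decay of $\widehat G$), then propagate to general $k$ by induction, using smoothness of $\p_z^\ell G$ for $z>0$ away from the jump. The paper's proof is a two-line sketch of precisely this argument; your write-up fills in the same details, including the boundary-term vanishing, the Plancherel/pointwise-Fourier reduction for the $L^2$ estimate, and the interpretation of the $\xi$-dependent factor $|\mu_j^+|^{k-m}$. (One small slip: in the inductive step you wrote $\p_z^{k-1-m}G$ where you mean $\p_z^{k-m}G$, but the argument is clearly correct.)
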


\begin{proof}
For $k=0$, the formula follows easily from integrating by parts the right-hand side of
\[
\p_z \widehat{G\ast S}(\xi, z)=\int_0^\infty \p_z \cG(\xi, z-z') \hS(z')\dd z'.
\]
For $k\geq 1$, we argue by induction and obtain the desired result. 
\end{proof}


\section{Construction of the first order  terms of the expansion}\label{sec:correcteurs}

The purpose of this section is to determine completely and explicitely $u^0$ and $u^1$, following the computations of \cref{sec:heuristique}.
This construction will be generalized later in \cref{lem:approx-high-order} where we will construct a solution at arbitrary order.
However, since the main order terms $u^0$ and $u^1$ are the most important ones for applications, since they are used in our numerical illustrations of the separation phenomenon (see \cref{fig:illustration}), we isolate their derivation in the present Section. We emphasize that the term $u^1$ is the first that will allow us to see a clear difference with a 2D Munk model without topography, see \cref{fig:illustration} as well.

Let us recall a few features of $u^0$ and $u^1$, which we derived in \cref{sec:heuristique}. First, $u^0=(\nabla_h^\bot p^0, 0)$, where $p^0$ satisfies \eqref{systeme_interieur}. As a consequence, $p^0= p_{\mathrm{i}}^0 + p_{\mathrm{M}}^0$, where $p_{\mathrm{i}}^0 =\beta G\ast (\nabla_h^\bot \cdot f_h)$ and $ p_{\mathrm{M}}^0=\cF^{-1}(\sum\limits_{j=1,2} c_j^0 e^{-\mu_j^+ z})$ (see \cref{prop:QG3D-visc}), and $c_j^0$ are given by \eqref{def:cj0}.
Furthermore, we recall that there is no Ekman boundary layer within $u^0$ and $u^1$, so that we may also write
 $u^1=u^1_{\mathrm{i}}+u_{\mathrm{M}}^1$.

We introduce the following notations, for $v_h$ a 2D vector field and $\varphi$ a scalar function,
$$ \begin{cases}
	L^1 v_h =  (\p_t -\Delta_\nu )v_h + \beta y v_h^\perp \\
	\widetilde{L^2} \varphi \ = \nabla^\perp_h \cdot L^1 \nabla_h^\perp\varphi  = (\p_t \Delta_h + \beta \p_1 -\Delta_\nu\Delta_h)\varphi \\
	L^2 \varphi \ = (\p_t \Delta + \beta \p_1 -\Delta_\nu\Delta_h)\varphi= \widetilde{L^2} \varphi + \p_t \p_3^2 \varphi.
\end{cases} $$

We then recall the following definitions, which were given in Section \ref{sec:heuristique}, namely \eqref{u1hh} and \eqref{eqsurcorrecteurh}:
\begin{definition} The main order term $u^0$ and the first corrector $u^1$ are defined as follows:
\begin{itemize}
    \item $u^0=(\nabla_h^\bot p^0, 0)$, where $L^2 p^0= \beta \nabla_h^\bot \cdot f_h$ and $p^0=\p_n p^0=0$ on $\p\Om;$
    \item The first order pressure corrector $p^1$ satisfies
    \be\label{eqsurp1b}
L^2 p^1 = \nabla_h^\perp \cdot L^1 \bigg( \beta f_h^\perp + L^1 \nabla_h p^0 \bigg).
    \ee
More precisely, $p^1=\overline{p}^1 + \cF^{-1}(\sum_j c_j^1 e^{-\mu_j^+ z} )$, where $\overline{p}^1:=G\ast \left(\nabla_h^\perp \cdot L^1 \bigg( \beta f_h^\perp + L^1 \nabla_h p^0 \bigg)\right)$,
the roots $\mu_j^+$ are defined in \cref{lem:characteristic} and the coefficients $c_j^1$ by \eqref{coeffc1}, with 
   \be\label{def:baru1}
\overline{u}^1_h:= - \beta f_h^\perp - L^1 \nabla_h p^0  + \nabla_h^\perp \overline{p}^1,\ \overline{u}^1_3=-\p_t \p_3 p^0.
\ee
\item The first order corrector for the velocity is then defined as
    	\begin{subequations}\label{u1-with-p1}
		\begin{align}
			u^1_h  & = - \beta f_h^\perp - L^1 \nabla_h p^0  + \nabla_h^\perp p^1, \label{u_1_h}\\
			u^1_3 &= -\p_t \p_3 p^0 .\label{u_1_3}
		\end{align} 
	\end{subequations}

\end{itemize}
     \label{lemma:structure-u-1}
\end{definition}

\begin{remark}\label{rem:reg-u1} \begin{itemize}
    \item  The derivation of these equations will be performed in \cref{ssec:derivation}.   Note that $p^0$ and $p^1$ are well-defined thanks to \cref{prop:QG3D-visc}. As a consequence,  if $u^0\in H^m(\R^2_{x,y}\times \R^+_z)$, then $u^1\in H^{m-6}(\R^2_{x,y}\times \R^+_z)$.
    \item There is a crucial difference between \eqref{u_1_h} and \eqref{u_1_3}: the first one requires to know $p^1$, whereas the second merely requires to know $p^0$. 
    Note that, for the first two order terms, we would obtain the same formulas by merely considering a linearized version of the primitive equations with the same scaling (i.e. the vertical momentum equation is replaced by the hydrostatic equilibrium). 
    \item The right-hand side term in \eqref{eqsurp1b} can be understood as a commutator term. Indeed, if we had $\nabla_h^\perp \cdot L^1 =  L^1 \cdot \nabla_h^\perp$ (which is false, as we will see), then the  right-hand side would simply be $L^1 \nabla_h\cdot f_h$, which is zero under realistic modelling asumptions (the wind forcing should mainly be 2D and  divergence free).
\end{itemize}
\end{remark}

Our next result consists in algebraic manipulations on the right-hand side, which are useful to compute numerically the approximate solution:

\begin{lemma}[For numerical simulation purposes]
	Equation \eqref{eqsurp1b} can be rewritten as
	\[
L^2 p^1=\beta (\p_t - \Delta_\nu ) \nabla_h \cdot f_h + \beta^2 y \nabla_h^\bot \cdot f_h - 2 \beta y  \p_t \p_3^2 p^0 
 + \beta^2 f_1  + 2 \beta (\p_t -\Delta_\nu ) \p_2 p^0 - \nu_h \beta \p_2 \Delta_h p^0.
        \]
   
\label{lem:RHS-num}
\end{lemma}

\subsection{Formal derivation of the equation for the first order corrector}
\label{ssec:derivation}
Let us first remark that the horizontal part of the first equation of our Boussinesq-type system 
\eqref{NS-rho}
can be written in terms of $L^1$ as \be\label{l1uh} L^1 u_h + \varepsilon^{-1} u_h^\perp = - \varepsilon^{-1} \nabla_h^\perp p + \beta f_h .\ee

Moreover, taking only into account the terms larger than $O(\varepsilon^{-2})$ in the vertical part formally yields  the hydrostatic balance at  orders $0$ and $1$, namely
\be\label{d3pj}\p_3 p^j = -\rho^j, \quad j=0,1. \ee

Finally, the equation on the vertical transport can also be looked at at these two orders and entails
\be\label{rhoj}\p_t \rho^j = u^{j+1}_3, \quad j=0,1. \ee

By writing $u = u^0+\varepsilon u^1 + \varepsilon^2 u^2 + \text{l.o.t.}$, (where $\text{l.o.t.}$ stands for lower order terms), and writing the same for $p$, \eqref{l1uh} gives 
\begin{subequations}
	\begin{align}
		L^1 u^0_h + (u^1_h)^\perp + \nabla_h p^1 & = \beta  f_h  \label{u1} \\
		L^1 u^1_h + (u^2_h)^\perp + \nabla_h p^2 & = 0  \label{u2} ,
	\end{align}
\end{subequations}
that should be understood together with \eqref{d3pj} and \eqref{rhoj}.

Note that \eqref{u1} immediately gives \eqref{u_1_h}. 
Moreover, the combination of \eqref{d3pj} and \eqref{rhoj} yields 
\be\label{lienpju1}-\p_t\p_3 p^j = u^{j+1}_3,\ee
for $j=0,1$. Hence for $j=0$, we retrieve \eqref{u_1_3}. Now, to obtain the equation on $p^1$, we plug \eqref{u_1_h} into \eqref{u2}, and  take the horizontal curl. This yields
$$  \nabla_h^\perp \cdot L^1 \bigg( - \beta f_h^\perp - L^1 \nabla_h p^0 \bigg)+ \underbrace{\nabla_h^\perp \cdot L^1 \bigg(\nabla_h^\perp p^1  \bigg)}_{\widetilde{L^2} p^1} + \nabla_h \cdot u^2_h =0, $$
by definition of $\widetilde{L^2}$. Now it suffices to add $\p_3$\eqref{lienpju1} for $j=1$, to get 
$$ \nabla_h^\perp \cdot L^1 \bigg( - f_h^\perp - L^1 \nabla_h p^0 \bigg)+ \underbrace{\widetilde{L^2} p^1 +\p_t\p^2_3p^1}_{L^2 p^1} + \underbrace{\nabla_h \cdot u^2_h + \p_3 u^2_3}_{=0} =0  $$
and this gives precisely the equation on $p^1$.

For the regularity mentioned in \cref{rem:reg-u1}, replacing $\nabla_hp^0$ by $(u^0_h)^\perp$, the composition  of $\nabla_h\cdot L^1$ and $L^1$ counts for at most 5 derivatives.
Notice that we could use the regularisation of the convolution with the Green function to minimize the loss of the regularity.
However the associated estimates would depend on
the small parameters, which we would like to avoid.

\subsection{Proof of  Lemma \texorpdfstring{\ref{lem:RHS-num}}{4.4}}

To compute the right-hand side of \eqref{eqsurp1b}, we start with the term in $p^0$. First, we have
$$L^1 \nabla_h p^0  =  (\p_t -\Delta_\nu )\nabla_h p^0 + \beta y \nabla_h^\perp p^0 $$
by definition. Then, applying $L^1$ again, we find
$$\ba 
L^1 ( L^1 \nabla_h p^0 ) =& 
(\p_t -\Delta_\nu )^2 \nabla_h p^0 + (\p_t -\Delta_\nu )\bigg(\beta y \nabla_h^\perp p^0 \bigg) + \beta y (\p_t -\Delta_\nu ) \nabla_h^\perp p^0 -(\beta y)^2 \nabla_h p^0
\ea$$
and we now should compute the commutator between $\beta y$ and $(\p_t-\Delta_\nu)$, which is

$$ (\p_t -\Delta_\nu ) \bigg(\beta y \nabla_h^\perp p^0 \bigg) = \beta y (\p_t -\Delta_\nu ) \nabla_h^\perp p^0 -\nu_h \beta \p_2 \nabla_h^\perp p^0. $$

Therefore, we have obtained
$$L^1 ( L^1 \nabla_h p^0 ) =(\p_t -\Delta_\nu )^2  \nabla_h p^0 + 2 \beta y (\p_t -\Delta_\nu ) \nabla_h^\perp p^0 -\nu_h \beta \p_2 \nabla_h^\perp p^0 -(\beta y)^2 \nabla_h p^0.$$
Taking the horizontal curl yields
$$\ba 
\nabla_h^\perp \cdot L^1(L^1(\nabla_h p^0)) = 
 & 2\beta  y (\p_t -\Delta_\nu ) \Delta_h p^0 + 2 \beta (\p_t -\Delta_\nu ) \p_2 p^0 - \nu_h \beta \p_2 \Delta_h p^0 + 2\beta^2 y \p_1 p^0 \\
=& 2 \beta y \widetilde{L^2} p^0 + 2 \beta (\p_t -\Delta_\nu ) \p_2 p^0 - \nu_h \beta \p_2 \Delta_h p^0.
\ea $$

For the term in $f$, we find that 
$$\nabla_h^\perp \cdot L^1 f_h^\perp = (\p_t - \Delta_\nu ) \nabla_h \cdot f - \beta y \nabla_h^\perp\cdot f_h + \beta f_1. $$

Adding up the last two equalities and recalling that $L^2 p^0= \beta \nabla_h^\bot \cdot f_h$ leads to the desired result. \qed

We deduce from \cref{lem:RHS-num} another decomposition for $p^1$, which highlights an interesting structure which can be used, for instance, to minimise numerical errors when one performs numerical simulations.
\begin{corollary}
\label{cor:decomp-p1-bis}
Assume that $\nabla_h\cdot f_h=0$. We have
\be\label{eq:decomp-p1-bis}
p^1 = \beta \p_2^{-1} (y \p_2 p^0) + \widetilde{p}^1 + \cF^{-1} \left( \sum_{j=1,2} \tilde c_j^1 e^{-\mu_j^+ z}\right),
\ee
where $\p_2^{-1}$ is the operator whose Fourier symbol is $(i\xi_y)^{-1}$, and
\[
\widetilde{p}^1= G \ast \left(\beta^2 \p_2^{-1} \p_1 f_2 - 2 \beta y \p_t \p_3^2 p^0 -\nu_h \beta \p_2 \Delta_h p^0\right),
\]
and 
\be \label{coeffc1-tilde}
 \sum_{j\in \{1,2\}} \tilde c_j^1(\xi,\om) \begin{pmatrix}
    -i\xi_y\\
    c i \xi_x+ s\mu_j
\end{pmatrix}= - \widehat{\widetilde{ u}^1_h}\vert_{z=0},
\ee
    with
\[
\widetilde{ u}^1_h= - \beta f_h^\bot - (\p_t - \Delta_\nu) \nabla_h p^0 + \begin{pmatrix}
    0\\ - \beta \p_1 \p_2^{-1} p^0
\end{pmatrix} + \nabla_h^\bot \widetilde{p}^1.
\]
Additionally,
\[
u^1_h = \widetilde{ u}^1_h + \nabla_h^\bot \cF^{-1} \left( \sum_{j=1,2} \tilde c_j^1 e^{-\mu_j^+ z}\right).
\]

\end{corollary}

\begin{proof}
We first observe that 
\[
L^2(\beta y p^0)= \beta y L^2 p^0 + 2 \beta (\p_t - \Delta_\nu) \p_2 p^0.
\]
It follows that when $\nabla_h\cdot f_h=0$, $p^1$ satisfies
\[
L^2 p^1= L^2(\beta y p^0) + \beta^2 f_1 - 2 \beta y \p_t \p_3^2 p^0 - \nu_h \beta \p_2 \Delta_h p^0.
\]
We further write  the term $f_1$ as
\[
f_1= \p_2^{-1} \p_2 f_1 = - \p_2^{-1} \nabla_h^\bot\cdot f_h + \p_2^{-1} \p_1 f_2,
\]
so that
\[
L^2 p^1= L^2(\beta y p^0 - \beta \p_2^{-1} p^0) +   \p_2^{-1} \p_1 f_2 - 2 \beta y \p_t \p_3^2 p^0 -\nu_h \beta \p_2 \Delta_h p^0.
\]
Now, notice that $yp^0 - \p_2^{-1} p^0= \p_2^{-1} (y \p_2 p^0)$. We deduce from \cref{prop:QG3D-visc} that formula \eqref{eq:decomp-p1-bis} holds, for some coefficients $\tilde c_j^1$ which remain to be determined.

From there, we have
\[
u^1_h = - \beta f_h^\bot - (\p_t - \Delta_\nu) \nabla_h p^0 - \beta y \nabla_h^\bot p^0
+ \nabla_h^\bot \left(\beta \p_2^{-1} (y \p_2 p^0) + \widetilde{p}^1 + \cF^{-1} \left( \sum_{j=1,2} \tilde c_j^1 e^{-\mu_j^+ z}\right)\right).
    \]
We observe that $- \beta y \nabla_h^\bot p^0$ partially cancels out with the first term in $\nabla_h^\bot p^1$, and we obtain the announced formula for $u^1_h$ and $\widetilde{u}^1_h$.
The formula for the coefficients $\tilde c^1_j$ follows, recalling that $u^1\vert_{\p\Om}=0$.
\end{proof}

\subsection{Effective boundary conditions for \texorpdfstring{$p^1$}{p1} and separation mechanism}

We conclude this Section with some asymptotic formulas for the traces of $p^1$ and $\p_z p^1$ on $\p\Om:$

\begin{lemma}
    \label{lem-traces-p1}
    Assume that \eqref{hyp:parameters} is satisfied, and assume furthermore that $\nu_3\ll \nu_h^{4/3} \beta^{-1/3}$.
The following estimates hold at $z=0$:
\begin{eqnarray*}
    \p_1 p^1\vert_{z=0}&=&\beta f_1\vert_{z=0} + O (\beta^{2/3} \nueff^{1/3}) \ll \beta^{4/3} \nueff^{-1/3} ,\\
    \p_2  p^1\vert_{z=0}&=& \beta f_2\vert_{z=0} - \beta p_{\mathrm{i}}^0\vert_{z=0} + \frac{1}{s}\p_t \p_z p_{\mathrm{i}}^0\vert_{z=0}\\
   && + 3 s^2c \nu_h \mu_1^+\mu_2^+\p_x  p_{\mathrm{i}}^0\vert_{z=0} +s^2 \nu_h \p_z^2 G(\cdot,0)\ast_{x,y} (\beta \nabla_h^\bot \cdot f_h)(\cdot,0)+  o(\beta^{2/3} \nueff^{1/3}),
\end{eqnarray*}
where we have replaced $\mu_j^+$ by their leading order, which does not depend on $\xi$ (see \cref{lem:characteristic}).
\end{lemma}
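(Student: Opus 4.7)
The plan is to exploit the no-slip condition $u^1_h\vert_{z=0}=0$ directly. Combining this with formula \eqref{u_1_h} gives $\nabla_h^\perp p^1\vert_{z=0}=\beta f_h^\perp\vert_{z=0}+(L^1\nabla_h p^0)\vert_{z=0}$, and since $p^0\vert_{z=0}=\partial_z p^0\vert_{z=0}=0$ by the analysis of Section \ref{sec:equation-fonction-courant}, every tangential derivative of $p^0$ vanishes at $z=0$; in particular $\partial_1 p^0\vert_{z=0}=\partial_2 p^0\vert_{z=0}=0$. The $\partial_t$ and $\beta y$ components of $L^1\nabla_h p^0$ therefore drop out, leaving the clean expressions
\[
\partial_1 p^1\vert_{z=0}=\beta f_1\vert_{z=0}-\Delta_\nu \partial_2 p^0\vert_{z=0},\qquad
\partial_2 p^1\vert_{z=0}=\beta f_2\vert_{z=0}+\Delta_\nu \partial_1 p^0\vert_{z=0}.
\]

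The heart of the argument is then the computation of $\Delta_\nu$ at $z=0$. Expanding $\Delta_\nu=\nu_h\Delta_h+\nu_3\partial_3^2$ in the local frame via $\partial_1=c\partial_x-s\partial_z$ and $\partial_3=s\partial_x+c\partial_z$, and systematically exploiting the vanishing of tangential derivatives of $p^0$ and of $\partial_z p^0$ at $z=0$, I would establish $\Delta_\nu p^0\vert_{z=0}=\nueff\,\partial_z^2 p^0\vert_{z=0}$ together with $\Delta_\nu \partial_z p^0\vert_{z=0}=\nueff\,\partial_z^3 p^0\vert_{z=0}+2cs(\nu_3-\nu_h)\partial_x\partial_z^2 p^0\vert_{z=0}$. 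Commuting $\partial_y$ through $\Delta_\nu$ immediately gives $\Delta_\nu \partial_2 p^0\vert_{z=0}=\nueff\,\partial_y\partial_z^2 p^0\vert_{z=0}$, which is $O(\beta^{2/3}\nueff^{1/3})$ by the Munk scaling $|\mu_j^+|\lesssim(\beta/\nueff)^{1/3}$ from Lemma \ref{lem:characteristic}; this already proves the first assertion. The second requires
\[
\Delta_\nu \partial_1 p^0\vert_{z=0}=-s\nueff\,\partial_z^3 p^0\vert_{z=0}+c\bigl[\nueff-2s^2(\nu_3-\nu_h)\bigr]\partial_x\partial_z^2 p^0\vert_{z=0},
\]
and the extra assumption $\nu_3\ll\nu_h^{4/3}\beta^{-1/3}$ ensures that every $\nu_3$-piece inside the bracket, as well as the correction $\nueff-s^2\nu_h$, contributes only to the $o(\beta^{2/3}\nueff^{1/3})$ remainder.

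The remaining algebra then splits according to $p^0=p_i^0+p_M^0$. For the Munk contribution to $-s\nueff\,\partial_z^3 p_M^0\vert_{z=0}$, the matching identities $\sum_j c_j^0=-p_i^0\vert_{z=0}$ and $\sum_j \mu_j^+ c_j^0=\partial_z p_i^0\vert_{z=0}$ from Lemma \ref{lem:c1}, combined with the leading-order reduction $\nueff s^2(\mu_j^+)^3=\beta s+i\om\,\mu_j^+$ of the characteristic equation \eqref{eq:characteristic}, yield precisely $-\beta p_i^0\vert_{z=0}+s^{-1}\partial_t\partial_z p_i^0\vert_{z=0}$ modulo lower order. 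The Newton-type identity $\sum_j (\mu_j^+)^2 c_j^0=\mu_1^+\mu_2^+ p_i^0\vert_{z=0}+(\mu_1^++\mu_2^+)\partial_z p_i^0\vert_{z=0}$ then converts the $\partial_x\partial_z^2 p_M^0\vert_{z=0}$ contribution into the announced $3cs^2\nu_h\,\mu_1^+\mu_2^+\,\partial_x p_i^0\vert_{z=0}$, the sub-sum $(\mu_1^++\mu_2^+)$ being of lower order in the Munk scaling. For the interior contribution to $-s\nueff\,\partial_z^3 p_i^0\vert_{z=0}$, I would apply Lemma \ref{lem:CL-Gstar} with $k=2$, writing it as a sum of four convolutions involving $G,\partial_z G,\partial_z^2 G$ at $z=0$; a direct size check using $|C_j^+|\lesssim(s^2\beta)^{-1}$ shows that only the piece $\partial_z^2 G(\cdot,0)\ast_{x,y}(\beta\nabla_h^\perp\cdot f_h)(\cdot,0)$ reaches the leading scale $\beta^{2/3}\nueff^{1/3}$, all other pieces being strictly subdominant.

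The main obstacle is not conceptual but rather the careful bookkeeping of size estimates: one must verify that the $\nu_3$ cross terms arising from $\Delta_\nu$, the subdominant Munk sub-sum driven by $(\mu_1^++\mu_2^+)\partial_z p_i^0\vert_{z=0}$, and the three non-leading convolutions from Lemma \ref{lem:CL-Gstar} all fall strictly below the threshold $\beta^{2/3}\nueff^{1/3}$ under (H0) supplemented by $\nu_3\ll\nu_h^{4/3}\beta^{-1/3}$. A recurring point of vigilance is that the naive size of $-s\nueff\,\partial_z^3 p_M^0\vert_{z=0}$ is $\beta s$, and only after applying the characteristic equation does the proper cancellation produce the interpretable expression combining $-\beta p_i^0\vert_{z=0}$ and $s^{-1}\partial_t\partial_z p_i^0\vert_{z=0}$.
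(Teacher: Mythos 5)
Your proposal follows the paper's proof very closely: extract $\nabla_h p^1\vert_{z=0}$ from the no-slip condition and \eqref{u_1_h}, exploit $p^0=\p_z p^0=0$ on $\p\Om$, split $p^0=p_{\mathrm{i}}^0+p_{\mathrm{M}}^0$, treat the Munk part via the leading-order characteristic relation $s^2\nueff(\mu_j^+)^3=i\om\mu_j^++\beta s$ and the matching identities $\sum_j c_j^0=-\widehat{p_{\mathrm{i}}^0}\vert_{z=0}$, $\sum_j\mu_j^+c_j^0=\widehat{\p_z p_{\mathrm{i}}^0}\vert_{z=0}$, and treat the interior part via Lemma \ref{lem:CL-Gstar}. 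Your physical-space expansion of $\Delta_\nu\p_1$ in local coordinates is a clean, equivalent rewriting of the paper's Fourier-space factorization of $\nu_h(ci\xi_x+s\mu_j^+)^3$; it is in fact slightly tidier, since keeping $\nueff$ exactly in the $\p_z^3$ coefficient means the characteristic equation can be applied directly, and only the $\p_x\p_z^2$ bracket genuinely needs $\nu_3\ll\nu_h$ (the role of the full hypothesis $\nu_3\ll\nu_h^{4/3}\beta^{-1/3}$ is tied to the paper's choice to approximate $\Delta_\nu\p_1\approx\nu_h\p_1^3$, discarding the $\nu_3\p_3^2\p_1$ piece which is $O(\nu_3\mu^3)$).

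One point deserves care, and you should check it against the Lemma's stated formula. Your (correct) expansion gives the dominant coefficient $-s\nueff$ in front of $\p_z^3 p^0\vert_{z=0}$. For the Munk part this is exactly what the characteristic identity needs: $-s\nueff\p_z^3 p_{\mathrm{M}}^0\vert_{z=0}=s\nueff\cF^{-1}\bigl(\sum_j c_j^0(\mu_j^+)^3\bigr)\to-\beta p_{\mathrm{i}}^0\vert_{z=0}+s^{-1}\p_t\p_z p_{\mathrm{i}}^0\vert_{z=0}$, matching the statement. But the same coefficient $-s\nueff\approx -s^3\nu_h=|s|\,s^2\nu_h$ then multiplies the leading Green-function piece $\p_z^2 G(\cdot,0)\ast_{x,y}S(\cdot,0)$ from Lemma \ref{lem:CL-Gstar}, whereas the Lemma (and the paper's intermediate step ``$\p_1\Delta_\nu p_{\mathrm{i}}^0\vert_{z=0}=\nueff\,\p_z^2 G(\cdot,0)\ast\cdots$'') writes $s^2\nu_h$ (equivalently $\nueff$) without the $-s$ factor. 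These two expressions differ by a factor $-s=|\sin\alpha|$, and the $-s$ cannot be absorbed into the $\nueff\sim s^2\nu_h$ identification. Since the $-s$ is forced by $\p_1=c\p_x-s\p_z$ and is confirmed by the Munk-part cancellation you reproduce correctly, the $s^2\nu_h$ coefficient in the statement appears to be missing this factor; your derivation is the consistent one, and you should not try to force agreement with the printed coefficient.
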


Before tackling the proof of \cref{lem-traces-p1}, let us comment a little on these formulas. There are two points which are worth highlighting: the order of magnitudes of the different terms, and the mechanism behind separation.

\paragraph{Orders of magnitude.} We expect that $p^1$ is of order $\beta$ in $L^\infty$, with $c_j^1= O(\beta)$, so that  $ \p_1 p^1\vert_{z=0}$ should be of order $\beta \max(|\mu_j^+|)\sim \beta^{4/3}/\nueff^{1/3}$, and $\p_2 p^1\vert_{z=0}$ should be of order $\beta$. We refer to the definition of $p^1$ and to \cref{prop:QG3D-visc} for these heuristic bounds, which will be proved rigorously in \cref{lem:approx-high-order}.
However, we find that $ \p_1 p^1\vert_{z=0}$ is in fact of order $\beta$, hence much smaller than the expected value. This means that the normal derivative of the boundary layer part within $p^1$ actually vanishes at main order on the boundary.

As for $p^1$, we find that the first two terms $\beta f_2\vert_{z=0} - \beta p_{\mathrm{i}}^0\vert_{z=0}$ are indeed of order $\beta$ as expected. For the sake of completeness, we have also included the next order terms. The first one, namely $\p_t \p_z p_{\mathrm{i}}^0\vert_{z=0}$, is of order $\omega$. The last two, which arise from boundary layer contributions, are both of order $\nu_h (\beta/\nueff)^{2/3} = O (\beta^{2/3} \nueff^{1/3})\ll \beta$. Note that our assumption $| \omega| \lesssim \beta^{2/3} \nueff^{1/3}$ ensures that these three additional terms all have the same bound.

\paragraph{A separation mechanism.}
The pictures from \cref{fig:illustration} were obtained with a double gyre configuration. In other words, in the vicinity of $y=0$, we take 
\[
f_1(x,y,z)= \cos y \; \Phi_1(x,z),\qquad f_2(x,y,z)= \sin y \; \Phi_2(x,z),
\]
where $\Phi_1, \Phi_2$ are decaying functions. If these formulas are valid for any $y$, then it can be easily checked that $\nabla_h^\bot \cdot f_h$ is proportional to $\sin y$, and therefore $p^0_{\mathrm{i}}$ is also proportional to $\sin y$. Of course the same also holds for  the trace and normal derivative of $p^0_{\mathrm{i}}$ on the boundary. From there, recalling the definition of $c_j^0$ \eqref{def:cj0}, we infer that there exists $P^0$ such that $p^0= \sin y P^0(x,z)$.

However, the formulas from \cref{lem-traces-p1} indicate that $\p_1 p^1\vert_{z=0} $ is proportional to $\cos y$, and that $\p_2 p^1\vert_{z=0} $ is proportional to $\sin y$. Thus $p^1$ does not vanish on the line $y=0$ a priori, contrarily to $p^0$. This can also be understood from formula \eqref{eq:decomp-p1-bis}: the first two terms in the right-hand side of \eqref{eq:decomp-p1-bis} are both even with respect to $y$. To see that, notice that the multiplication by $y$, the derivative $\p_2$ and the inverse derivative $\p_2^-1 $ change the parity of the function.

As a consequence, the presence of the corrector $p^1$ within the stream function breaks the structure of the main order term: $p^0+ \eps p^1$ is no longer proportional to $\sin (y)$. This effect is precisely what is pictured in \cref{fig:illustration}, with the additional boundary layer having a visible cosine structure.

\begin{proof}
As a preliminary, let us note that assumption $\nu_3\ll \nu_h^{4/3} \beta^{-1/3}\ll \nu_h$ entails that $\nueff\sim s^2 \nu_h$.

We take the trace of \eqref{u_1_h} at $z=0$, recalling that $u^1_h\vert_{z=0}=0$ by definition of $c_j^1$. Using the properties $p^0=\p_z p^0=0$ at $z=0$, we deduce that
\[
\nabla_h p^1\vert_{z=0}= \beta f_h\vert_{z=0} + \Delta_\nu \nabla_h^\bot p^0\vert_{z=0}.
\]
The formula for $\p_1 p^1$ follows, recalling that $\mu_j^+=O(\beta^{1/3} \nueff^{-1/3})$.

As for $\p_2 p^1$, we decompose $p^0$ into $p_{\mathrm{i}}^0+ p_{\mathrm{M}}^0$. For the interior part, we use \cref{lem:CL-Gstar} {with $k=2$, and since the source term is regular enough so that $\p_z^m S \lesssim 1$}, we find that
\[
\p_1 \Delta_\nu p_{\mathrm{i}}^0\vert_{z=0}= \nueff \p_z^2 G(\cdot,0)\ast_{x,y} (\beta \nabla_h^\bot \cdot f_h)(\cdot,0) + O (\nueff (\beta/\nueff)^{1/3}).
\]
We now address the boundary layer part. We note that
\[
\p_1 \Delta_\nu e^{-\mu z}= \nu_h \p_1^3 e^{-\mu z} + O((\nu_h \mu + \nu_3 \mu^3) e^{-\mu z}),
\]
and thus, if $\nu_3\ll \nu_h^{4/3} \beta^{-1/3}$, 
\begin{align*}
 \Delta_\nu \p_1 p_{\mathrm{M}}^0\vert_{z=0}= & \cF^{-1}\left( \sum_{j=1,2} c_j^0 \nu_h (ci\xi_x + s \mu_j^+)^3\right) + o(\beta^{2/3} \nu_h^{1/3})\\
 =&\cF^{-1}\left( \sum_{j=1,2} c_j^0 \nueff s (\mu_j^+)^3\left(1 + 3 \frac{ci\xi_x}{s\mu_j^+}\right) \right)+ o(\beta^{2/3} \nu_h^{1/3}).
\end{align*}
Using the characteristic equation \eqref{eq:characteristic} together with the definition of $\mu_j^+$, we find that $i\om \mu_j^+ + \beta s = s^2 \nueff (\mu_j^+)^3 + O (\beta^{2/3} \nueff^{1/3})$. 
Recalling the definition of the coefficients $c_j^0$ \eqref{def:cj0}, we infer
\[
\ba
\cF^{-1}\left( \sum_{j=1,2} c_j^0 \nueff s (\mu_j^+)^3\right)&= -\frac{1}{s} \p_t \p_z  p_{\mathrm{M}}^0\vert_{z=0} + \beta  p_{\mathrm{M}}^0\vert_{z=0} +  O (\beta^{2/3} \nueff^{4/3})\\&=  \frac{1}{s} \p_t \p_z  p_{\mathrm{i}}^0\vert_{z=0} - \beta  p_{\mathrm{i}}^0\vert_{z=0}+  O (\beta^{2/3} \nueff^{4/3}).\ea
\]
Furthermore, inverting the system \eqref{def:cj0}, we also find that
\[
c_1^0= - \frac{\mu_2^+}{\mu_2^+-\mu_1^+} \widehat{p_{\mathrm{i}}^0}\vert_{z=0} + O(\nueff^{1/3} \beta^{-1/3}),\quad c_2^0=  \frac{\mu_1^+}{\mu_2^+-\mu_1^+} \widehat{p_{\mathrm{i}}^0 }\vert_{z=0} + O(\nueff^{1/3} \beta^{-1/3}). 
\]
We deduce that
\[
\sum_{j=1,2} c_j^0 \mu_j^2= \mu_1^+ \mu_2^+  \widehat{p_{\mathrm{i}}^0 }\vert_{z=0} + O(\beta^{1/3}/\nueff^{1/3}).
\]
The formula follows.
    \end{proof}

\section{Construction of Ekman boundary layers}
\label{sec:Ekman}

The purpose of this section is to construct the Ekman boundary layers associated with equation \eqref{NS-rho}, or rather with its $f$-plane version
\begin{align}\label{NS-rho-fplan}
\p_t u + \frac{1}{\varepsilon} e_3 \wedge u + \frac{1}{\varepsilon}\begin{pmatrix}\nabla_h p\\ \delta^{-2} \p_3 p \end{pmatrix}  - \nu_h \Delta_h u - \nu_3 \p_3^2 u=&\ \frac{1}{\eps \delta^2}\begin{pmatrix}
    0\\ 0\\ -  \rho
\end{pmatrix}\quad \text{in }\Om,   \\
\p_t \rho - \frac{1}{\varepsilon } u_3=&\ 0\quad \text{in }\Om,   \\
\Div u=&\ 0\quad \text{in }\Om.   
\end{align} 
Once again, since the above system has constant coefficients and the boundary of $\Omega$ is flat, we look for exponential solutions which decay far from the boundary (see \cite{gerard2008remarks} for a general presentation of this methodology).
Furthermore, since we have already investigated boundary layer solutions with a quasi-geostrophic structure in Section \ref{sec:QG}, we will be interested in solutions of \eqref{NS-rho-fplan} which are \textit{not} quasi-geostrophic at main order.
One final requirement will be that the $\beta$ term, which has been discarded in \eqref{NS-rho-fplan}, is indeed negligible compared to the other terms in the equation (time derivative, stratification, etc.)

Our main result on this system is the following:
\begin{proposition}\label{prop:Ekman}
Let $0<\varepsilon, \delta\ll 1$, and let $\om\in \R^*$ such that $|\omega\varepsilon| \ll 1$.    
Assume that equation \eqref{NS-rho-fplan} has a non-trivial solution of the form
    \[
\begin{pmatrix}
u\\
   p\\
   \rho
\end{pmatrix}
=\exp(i(\omega t + \xi_x x + \xi_y y) - \lambda z) \mathsf{U},
        \]
        with $\mathsf{U}\in \bC^5\setminus\{0\}$, $(\xi_x, \xi_y)\in \R^2$ with $|\xi| \ll |\om/\nueff|^{1/2}$, and $\Re(\lambda)>0$, $\Re(\lambda)\gg 1$. 
        Then 
        \[
\text{either }\lambda\sim e^{-i\frac{\pi }{4} \sgn(\om)}\dfrac{|\tan \alpha| }{\nueff^\frac  1 2 \varepsilon \vert \om \vert^\frac 1 2  }\quad \text{or}\quad \lambda\sim e^{i\frac{\pi }{4}\sgn(\om)} \dfrac{\vert \om \vert^\frac 1 2 }{\nueff^\frac  1 2  \vert \sin \alpha \vert},
                \]
where $\nueff=\nu_h \sin^2 \alpha + \nu_3 \cos^2 \alpha$.

\begin{enumerate}
    \item If $\lambda \sim e^{-i\frac{\pi }{4} \sgn(\om)}\dfrac{|\tan \alpha| }{\nueff^\frac  1 2 \varepsilon \vert \om \vert^\frac 1 2  }$, then $\widetilde{ \mathsf{U}}\simeq e_x$\footnote{$\widetilde{ \mathsf{U}}$ is the 3D vector consisting of the first three components of $\mathsf U$.}. In this case, the hydrostatic balance is satisfied at main order, and the viscous dissipation is balanced by rotation in the $e_y$ direction.

    \item If $\lambda \sim  e^{i\frac{\pi }{4}\sgn(\om)} \dfrac{\vert \om \vert^\frac 1 2 }{\nueff^\frac  1 2  \vert \sin \alpha \vert}$, then $\widetilde{\mathsf{U}}\simeq e_y$. In this case, the hydrostatic balance and the quasi-geostrophic balance are satisfied at main order.
   
\end{enumerate}

\end{proposition}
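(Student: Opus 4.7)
The plan is to substitute the plane-wave Ansatz into the $f$-plane system \eqref{NS-rho-fplan} rewritten in local coordinates, reduce the resulting homogeneous linear system, and identify the two large roots of the characteristic polynomial by dominant balance. Using the decomposition $e_3 = s\, e_x + c\, e_z$ with $s=\sin\alpha$, $c=\cos\alpha$, the Coriolis term reads $e_3 \wedge u = -c u_y e_x + (c u_x - s u_z) e_y + s u_y e_z$. The anisotropic pressure operator $(\nabla_h p, \delta^{-2}\p_3 p)$ is decomposed similarly via $\p_1 = c\p_x - s\p_z$ and $\p_3 = c\p_z + s\p_x$, which creates $\delta^{-2}$-factors on the cross terms and is where the non-hydrostatic contribution enters. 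The viscous operator $\nu_h \Delta_h + \nu_3 \p_3^2$ transforms into a quadratic form whose leading coefficient in $\p_z^2$ is precisely $\nueff$.

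Injecting the Ansatz replaces $\p_t \mapsto i\om$, $\p_{x,y} \mapsto i\xi_{x,y}$, $\p_z \mapsto -\lambda$, turning \eqref{NS-rho-fplan} into a $5\times 5$ homogeneous system $\mathcal A(\lambda)\mathsf U = 0$. I would eliminate $\rho = u_z/(i\om\eps)$ via \eqref{L2NSrho} and use the divergence-free constraint $i\xi_x u_x + i\xi_y u_y - \lambda u_z = 0$ to reduce further. Inverting the $2\times 2$ horizontal-velocity block (non-degenerate thanks to a determinant of order $\nueff^2\lambda^4 + c^2/\eps^2$ in the relevant regime) expresses $u_x, u_y$ in terms of $u_z$ and $p$; substituting into the vertical momentum equation and the divergence relation yields a scalar polynomial equation in $\lambda$ whose large roots are to be found.

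Under $\Re\lambda \gg 1$, $|\xi| \ll |\om/\nueff|^{1/2}$ and $|\om\eps|\ll 1$, a dominant-balance analysis yields two well-separated roots. The first is classical: in the $e_y$-momentum equation, the viscous dissipation $\nueff \lambda^2$ is balanced by the Coriolis projection $c u_x/\eps$ coupled to a near-geostrophic $u_x$, giving $\lambda^2 \sim i\om/(\nueff s^2)$ and hence $\lambda \sim e^{i\pi\sgn(\om)/4}|\om|^{1/2}/(\nueff^{1/2}|s|)$; the kernel computation at this $\lambda$ shows $u_x$ to be subdominant compared to $u_y$, yielding $\widetilde{\mathsf U}\simeq e_y$, and one checks directly that $u_h^\perp + \nabla_h p = 0$ holds at leading order (so this branch is quasi-geostrophic and must be discarded, as anticipated in \cref{sec:strategy-intro}). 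The second balance is the new one: in the $e_x$-equation the Coriolis contribution $-c u_y/\eps$ is subleading, so viscous dissipation must be balanced by the non-hydrostatic part of the pressure (the $\delta^{-2}$-term inherited from the coordinate change) coupled to the density through \eqref{L2NSrho}; this produces $\lambda^2 \sim -i\tan^2\alpha/(\nueff\eps^2\om)$, hence $\lambda \sim e^{-i\pi\sgn(\om)/4}|\tan\alpha|/(\nueff^{1/2}\eps|\om|^{1/2})$, with the dominant $e_y$-equation then forcing $u_y \ll u_x$ and $\widetilde{\mathsf U}\simeq e_x$, while the vertical momentum equation reduces at main order to the hydrostatic balance $\lambda\eps p \simeq \rho$. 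In both cases the sign of the imaginary exponent is fixed by the requirement $\Re\lambda > 0$.

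The main obstacle will be the algebraic bookkeeping induced by the coordinate change: the $\delta^{-2}$-factor mixes with $sc$ and $c^2$ cross terms and with the Coriolis coupling, producing a matrix $\mathcal A(\lambda)$ with many $O(\eps^{-2})$ entries of different orders in $\lambda$. Identifying precisely which of these are active in each dominant balance, and verifying a posteriori that the $\beta$-term discarded in \eqref{NS-rho-fplan} is indeed negligible compared to the balancing terms, requires careful asymptotic accounting but no new conceptual ingredient beyond the classical Ekman-layer computation of \cite{grenier1997ekman,masmoudi2000ekman,CDGG}.
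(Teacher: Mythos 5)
Your proposal follows essentially the same route as the paper: pass to local coordinates, substitute the plane-wave Ansatz to get a $5\times5$ homogeneous system $\mathcal A(\lambda)\mathsf U'=0$, eliminate down to a characteristic polynomial whose large roots are identified by dominant balance, and then read off the eigenvector structure. Your two balances are exactly the ones the paper finds: setting $r = i\om - \nueff\lambda^2 + O(\lambda\nu)$, the paper's reduced quadratic $r^2 + \tfrac{s^2}{i\om\eps^2 c^2}r + \tfrac{1}{\eps^2}=0$ (valid because $\tilde s\simeq s$, $\tilde c\simeq c$ when $|\lambda|\gg|\xi_x|$) yields $r_1\simeq \tfrac{is^2}{\om\eps^2 c^2}$ (your non-hydrostatic-pressure/viscosity balance, $\lambda_1^2\sim -i\tan^2\alpha/(\nueff\eps^2\om)$) and $r_2\simeq -\tfrac{i\om c^2}{s^2}$ (your quasi-geostrophic branch, $\lambda_2^2\sim i\om/(\nueff s^2)$), matching your formulas and the eigenvector asymptotics $\widetilde{\mathsf U}\simeq e_x$ and $\widetilde{\mathsf U}\simeq e_y$ respectively.

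The only genuine difference is the elimination order in the reduction. The paper first removes $\mathsf{U}'_z$ and $\mathsf{R}$ via incompressibility and mass conservation, then solves for $\mathsf{P}$ from the $e_z$-projected momentum equation, landing on a $2\times2$ system in $(\mathsf{U}'_x,\mathsf{U}'_y)$ whose determinant is the exact characteristic equation $r^2 + \eps^{-2} + \delta^2 r\left(r + \tfrac{1}{i\om\eps^2\delta^2}\right)\left(\tfrac{\tilde s^2}{\tilde c^2}-\tfrac{\xi_y^2}{\lambda^2\tilde c^2}\right)=0$; this route has the advantage that the aspect-ratio dependence is manifestly seen to drop out of the two large roots. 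You instead propose to invert the horizontal-momentum block $\begin{pmatrix} r & -c/\eps \\ c/\eps & r\end{pmatrix}$ (non-degenerate since $\det = r^2 + c^2/\eps^2\ne 0$) to express $u_x,u_y$ in terms of $u_z, p$ and then substitute into the $e_z$-momentum and divergence relations. Both orderings produce the same degree-six polynomial in $\lambda$ and the same dominant balances, so the conceptual content is identical; yours is a perfectly valid alternative but does not buy anything beyond what the paper already has, and is arguably slightly more cumbersome because the $\delta^{-2}$ cross-terms then appear on both sides of the block inversion. One small overreach: you append a check that the $\beta$-term is negligible and a discussion of discarding the $\lambda_2$-branch; both are handled in the paper but in Remark \ref{rem:redundancy-Ekman} and Section \ref{sec:Ekman} respectively, not as part of Proposition \ref{prop:Ekman}, whose statement is agnostic as to which root is retained.
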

\begin{remark}
    As announced in the introduction, within this section we do not assume that \eqref{hyp:parameters} is satisfied, but we keep general values for the coefficients. Remarkably, the expressions of $\lambda_1$ and $\lambda_2$ at main order do not depend on the aspect ratio $\delta$.
\end{remark}

\begin{remark}[Root $\lambda_2$ will be discarded]
    In the construction of the approximate solution, we will always discard the second Ekman boundary layer $\lambda_2$. Indeed, in the regime we consider (see assumption \eqref{hyp:parameters}), the $\beta$ effect, which we have neglected, is in fact dominant in the boundary layer associated with $\lambda_2$, and therefore the derivation of the expression of $\lambda_2$ is not valid.

    In fact, looking more closely at the expression of $\lambda$ and at the associated eigenvector, we will see that 
    $\lambda_2$ coincides with the root $\mu_1^+$ within Munk boundary layers in the regime described in \cref{rem:regime}. In this case, this Ekman boundary layer is redundant with the Munk layer, and should be discarded in order to avoid any artifical undeterminacy of the coefficients.  

    This phenomenon (quasi-geostrophy of one of the Ekman layers, potential redundancy with one of the Munk layers) was completely absent from previous studies of Ekman layers, and is strongly associated with the fact that the boundary is not horizontal, hence Munk and Ekman boundary layers are localized within the same region. 
    \label{rem:redundancy-Ekman}
\end{remark}

\begin{remark}[Balance within the first Ekman layer]
As mentioned in the Introduction, viscous dissipation is balanced by rotation in the $e_y$ direction in the first Ekman layer, as in classical Ekman layers. However, in the $e_x$ direction, the balance is more complicated: viscous dissipation is balanced by a combination of pressure and stratification, and in particular to the non-hydrostatic part of the pressure. Rotation is negligible in this direction.
\end{remark}

\subsection{Identifying the characteristic roots}
The first step is to compute the characteristic equation satisfied by $\lambda$. To that end, we choose to work in the local variables $(x,y,z)$, using the changes of coordinates
\be \label{change_coos}
\ba
e_x = ce_1 + se_3,\qquad &e_z = ce_3 - se_1 \\ e_1 = ce_x -se_z,\qquad &e_3 = ce_z +se_1,
\ea
\ee 
where we have used the shortcut $s=\sin \alpha$, $c=\cos \alpha$. 
For further purposes, let us also introduce the short-hand notation
 $$
    \tilde s  =  s + i \xi_x c \lambda^{-1}, \quad \tilde c= c - i \xi_x s  \lambda^{-1},
$$ 
so that $\tilde s \simeq s, \tilde c \simeq c$ for $|\lambda|\gg |\xi_x|$. 
We also note that $s \tilde s + c \tilde c= 1$,  $\tilde s c - \tilde c s= i \xi_x \lambda^{-1}$, and
\[
\p_1e^{i(\xi_x x + \xi_y y) - \lambda z}= \lambda \tilde s e^{i(\xi_x x + \xi_y y) - \lambda z},\quad \p_3e^{i(\xi_x x + \xi_y y) - \lambda z}=- \lambda \tilde c e^{i(\xi_x x + \xi_y y) - \lambda z}.
\]

Setting 
\[
\mathsf{U}'=\begin{pmatrix}
c \mathsf{U}_1 + s   \mathsf{U}_3\\ \mathsf{U}_2\\   -s \mathsf{U}_1 +   c \mathsf{U}_3\\ \mathsf{P}\\ \mathsf{R}
\end{pmatrix}:= 
\begin{pmatrix}
   \mathsf{U}'_x\\\mathsf{U}'_y\\ \mathsf{U}'_z \\ \mathsf{P}\\ \mathsf{R}
\end{pmatrix},
    \]
we infer that $\mathsf{U}'$ satisfies the linear system $\mathcal A \mathsf{U}'=0$, where
\be \label{ekman_lin}
\mathcal A :=\begin{pmatrix}
 r & - \frac{c}{\varepsilon} & 0 & \lambda \tilde s\frac{c}{\varepsilon}- \lambda \tilde c \frac{s}{\varepsilon \delta^2} & \frac{s}{\varepsilon \delta^2} \\
  \frac{c}{\varepsilon} & r & - \frac{s}{\varepsilon} & {\frac{i\xi_y}{\varepsilon }} & 0\\
  0 & \frac{s}{\varepsilon} & r &  -\lambda \tilde s \frac{s}{\varepsilon} - \lambda \tilde c \frac{c}{\varepsilon \delta^2}& \frac{c}{\varepsilon \delta^2} \\
  i \xi_x & i \xi_y &-\lambda & 0 &0\\
  - \frac{s}{\varepsilon} & 0 & - \frac{c}{\varepsilon} & 0 & i\om 
\end{pmatrix}
\ee
in which we set, keeping the short-hand $\nueff= \nu_h s^2 + \nu_3 c^2$,
\begin{align*}
r:=&  \, i\om -\nu_h ( i c\xi_x + \lambda s)^2 + \nu_h \xi_y^2 - \nu_3 (is\xi_x - c \lambda )^2\\
=& \,  i\om - \nueff \lambda^2 + 2 i c  s  \xi_x (\nu_3 - \nu_h )\lambda+ \nu_h (c^2 \xi_x^2 + \xi_y^2) + \nu_3 s^2 \xi_x^2 .
\end{align*}

Note that the fourth line of $\mathcal A$ corresponds to the divergence free condition, and the fifth one to the conservation of mass.

System \eqref{ekman_lin} has a non-trivial solution if and only if $\det \mathcal A=0$, and it can be checked that $\det \mathcal A$ is a polynomial of degree  6 in $\lambda$. In order to simplify its computation, let us reduce the system to a $2\times 2$ system on $(\mathsf{U}'_x, \mathsf{U}'_y)$.
Using the divergence-free condition and mass conservation, we have
\[
\ba 
\mathsf{U}'_z = &\lambda^{-1} (i\xi_x \mathsf{U}'_x + i \xi_y \mathsf{U}'_y)\\
\mathsf{R} =& \frac{1}{i\om \varepsilon} (s \mathsf{U}'_x  + c \mathsf{U}'_z )
= \frac{1}{i\om \varepsilon}\left( \tilde s \, \mathsf{U}'_x  + \frac{ i \xi_y c}{\lambda}  \mathsf{U}'_y )\right).
\ea
\]
Eventually, we project the momentum equation onto $e_3$, which amounts to multiplying the first line of $\mathcal A$ by $s$ and the third line by $c$. We obtain
\[
r\left( s \mathsf{U}'_x +  c \mathsf{U}'_z\right) - \frac{\lambda \tilde c}{\varepsilon \delta^2} \mathsf{P} + \frac{1}{\varepsilon \delta^2 }\mathsf{R}=0.
\]
Replacing $\mathsf{U}'_z, \mathsf{R}$ by their previous expressions leads to
\[
\mathsf{P}=\frac{\varepsilon \delta^2}{ \lambda \tilde c }\left( r + \frac{1}{i\om \varepsilon^2 \delta^2}\right)\left[ \tilde s  \mathsf{U}'_x + i \xi_y \lambda^{-1} c \mathsf{U}'_y\right],
\]
and the combination of the pressure gradient and stratification in the first line of \eqref{ekman_lin} is
\begin{align}\label{pression-Ekman-non-hydro}
  & \left[  \lambda \tilde s \frac{c}{\varepsilon}  - \lambda \tilde c  \frac{s}{\varepsilon \delta^2}\right] \mathsf{P} + \frac{s}{\varepsilon \delta^2}\mathsf{R}\\
=&\left(\tilde s \mathsf{U}'_x + i c \xi_y \lambda^{-1} \mathsf{U}'_y\right) \left[-s r + \frac{\delta^2 c \tilde s}{\tilde c} \left( r + \frac{1}{i\om \varepsilon^2 \delta^2}\right)\right].
\nonumber
\end{align}
Plugging these expressions into the system for $(\mathsf{U}'_x, \mathsf{U}'_y)$, we find eventually
\be\label{reduced-Ekman-lin}
\begin{pmatrix}
r (1- s  \tilde s) + \tilde s^2 \dfrac{c\delta^2}{\tilde c} \left( r + \dfrac{1}{i\om \varepsilon^2 \delta^2}\right) 
& - \dfrac{c}{\varepsilon} + i c \xi_y \lambda^{-1} \left[-s r + \dfrac{\delta^2 c \tilde s}{\tilde c} \left( r + \dfrac{1}{i\om \varepsilon^2 \delta^2}\right)\right]
\\~&~\\
\dfrac{\tilde c }{\varepsilon} + \dfrac{i\xi_y \delta^2\tilde s}{ \lambda \tilde c}\left( r + \dfrac{1}{i\om \varepsilon^2 \delta^2}\right)& r - \dfrac{si \xi_y}{\varepsilon\lambda} - \dfrac{\delta^2\xi_y^2 c}{ \lambda^2\tilde c}\left( r + \dfrac{1}{i\om \varepsilon^2 \delta^2}\right) 
    \end{pmatrix}
    \begin{pmatrix}
\mathsf{U}'_x\\ \mathsf{U}'_y
    \end{pmatrix}=0.
\ee
Computing the determinant of the $2\times 2$ matrix in the left-hand side, we obtain after some simplification the characteristic equation
\be\label{determinant} r^2 + \dfrac{1}{\varepsilon^2} + \delta^2 r \bigg(r + \dfrac{1}{i \om \varepsilon^2 \delta^2}\bigg)\bigg(\dfrac{\tilde s^2}{\tilde c^2} - \dfrac{\xi_y^2}{\lambda^2 \tilde c^2}\bigg) = 0.\ee
It follows from the expressions of $\tilde c, \tilde s$ that (after multiplication by $\lambda^2\tilde c^2$) the left-hand side is indeed a polynomial of degree 6 in $\lambda$. We emphasize that this expression is exact: at this stage, no approximation has been made.

We now look for roots $\lambda$ such that $|\lambda|\gg 1$ and $| \lambda| \gg | \xi|$. For such roots, we may approximate $\tilde s$, $\tilde c$ by $s$ and $c$ respectively.
Expanding the last term in \eqref{determinant} and assuming that $\delta\ll 1$,
 the equation for $r$ becomes
\be\label{determinant-approche}
r^2 + \frac{s^2}{i\om \varepsilon^2 c^2} r + \frac{1}{\varepsilon^2}=0.
\ee
Note that this equation no longer depends on the value of the aspect ratio.
The discriminant of the equation is
\[
\Delta= - \frac{s^4}{\om^2 \eps^4 c^4} - \frac{4}{\varepsilon^2}.
\]
Since we have assumed that $|\om\varepsilon| \ll 1$, we find that the second term is negligible, and therefore the roots of \eqref{determinant-approche} are given by
\[
r_1\simeq \frac{i  s^2}{\om \varepsilon^2  c^2},\qquad r_2 \simeq  \frac{-i \om  c^2}{ s^2}.
\]
For each value of $r=r_i$, we find two distinct roots $\pm \lambda_i$ by recalling that $r=  i \om - \nueff \lambda^2 + O(\lambda \nu)$. 
Keeping only the roots with positive real parts and noticing that $|r_1| \gg |\om|$, we obtain eventually
\[
\lambda_1 \sim e^{-i\frac{\pi }{4} \sgn(\om)}\dfrac{|\tan \alpha| }{\nueff^\frac  1 2 \varepsilon \vert \om \vert^\frac 1 2  },
\qquad \lambda_2\sim e^{i\frac{\pi }{4}\sgn(\om)} \dfrac{\vert \om \vert^\frac 1 2 }{\nueff^\frac  1 2  \vert \sin \alpha \vert}.
\]

\begin{remark}
It can be checked that the two remaining complex roots of \eqref{determinant} satisfy $\lambda\sim \pm (|\xi_x|^2 + |\xi_y|^2)^{1/2}$, so that $\dfrac{\tilde s^2}{\tilde c^2} - \dfrac{\xi_y^2}{\lambda^2 \tilde c^2}\simeq -1$ and $r\simeq i \om$.  Therefore they do not correspond to boundary layer modes. 
\end{remark}

\subsection{Analysis of the generalized eigencouple associated with \texorpdfstring{$\lambda_1$}{lambda 1}}

We now analyse the behaviour of the eigenvector $\mathsf{U}^1$ associated with $\lambda_1$. Looking at the second line of the matrix in the left-hand side of \eqref{reduced-Ekman-lin}, we find that
\begin{align*}
\left| \dfrac{i\xi_y\delta^2\tilde s}{ \lambda \tilde c}\left( r + \dfrac{1}{i\om \varepsilon^2 \delta^2}\right)\right|& \lesssim |\xi_y| \frac{1}{|\om | \varepsilon^2 |\lambda_1|}\lesssim |\xi_y| \frac{\nueff^{1/2}}{\eps | \om|^{1/2}}\ll \varepsilon^{-1},\\
    \left| \dfrac{si \xi_y}{\varepsilon\lambda_1} \right| &\ll \frac{1}{\varepsilon}\ll |r_1|,\\
      \left| \dfrac{\delta^2\xi_y^2 c}{ \lambda_1^2\tilde c}\left( r + \dfrac{1}{i\om \varepsilon^2 \delta^2} \right)\right|& \lesssim |\xi_y|^2 \nueff \ll |r_1|.
\end{align*}
Hence the second line of \eqref{reduced-Ekman-lin} becomes, at main order,
\be\label{balance-lambda1}
\frac{c}{\eps} \mathsf{U}'_x + r_1 \mathsf{U}'_y\simeq 0.
\ee
Normalizing the eigenvector by choosing $\mathsf{U}'_x=1$, we obtain 
\[
\mathsf{U}'_y\simeq i \frac{c^3}{s^2}\om \eps \ll 1.
\]
From there, we infer that
\begin{align*}
    \mathsf{U}'_z&\simeq i \xi_x \lambda_1^{-1} = i \xi_x e^{i\frac{\pi}{4}\sgn(\om)} \nueff^{1/2} \eps | \om|^{1/2} | \tan \alpha|^{-1},\\
    \mathsf{P}&\simeq e^{-i\frac{\pi}{4}\sgn(\om)} \left(\frac{\nueff}{|\om|}\right)^{1/2} \sgn \alpha,\\
    \mathsf{R}&\simeq\frac{s}{i\om \varepsilon}. 
\end{align*}

\begin{remark}\label{rem:structure-ekman-valide}

The explicit expression of the eigenvector is quite informative, and shows that the structure of this Ekman boundary layer differs from the classical case without stratification. Indeed, at main order, we have $\mathsf{U_1}\simeq \cos \alpha$ and $\mathsf{U}_3 \simeq \sin \alpha$. Therefore the vertical component of the eigenvector is non zero at the main order, so that the vertical component of the interior solution  at order $\varepsilon^k$ on the boundary is balanced exactly by an Ekman layer of order $\varepsilon^k$, not $\varepsilon^{k-1}$ as it can be the case in flat settings. This is why the notion of Ekman pumping was referred to as no more relevant in the introduction. 
Additionally, it can be checked that
\[
\mathsf{P} \sim \frac{1}{\lambda_1^{-1} c} \mathsf{R},
\]
which means that hydrostatic balance is satisfied at main order within the boundary layer. 
However, computing the combination of the pressure gradient and of the stratification term in the $e_x$ component from \eqref{pression-Ekman-non-hydro}, we see that
\[
 \left[ \lambda_1 \tilde s \frac{c}{\varepsilon}  - \lambda_1 \tilde c  \frac{s}{\varepsilon \delta^2}\right] \mathsf{P} + \frac{s}{\varepsilon \delta^2}\mathsf{R}\simeq - r_1 \mathsf{U}'_x.
\]
This term exactly balances the diagonal term stemming from viscous dissipation in the $e_x$ component. In this component, the rotation term does not play a role at main order. In the second component however, the balance \eqref{balance-lambda1} is satisfied
at main order, which means that the viscous dissipation of the second component balances the Coriolis force.
It can be checked that the $\beta$ effect is indeed negligible within this boundary layer.

\end{remark}

In conclusion, we find that the boundary layer associated with the root $\lambda_1$ satisfies the following features.
\begin{itemize}
    \item Hydrostatic equilibrium is satisfied at main order. 

    \item The viscous dissipation balances the Coriolis term in the $e_y$ direction, and the combination of pressure and stratification in the $e_x$ direction, i.e. in the tangential direction that has a non-zero component along the vertical.
\end{itemize}

\subsection{Analysis of the generalized eigencouple associated with \texorpdfstring{$\lambda_2$}{lambda 2}}

 We now look at the first line of the matrix in the left-hand side of \eqref{reduced-Ekman-lin} when $r=r_2$, $\lambda=\lambda_2$. Note that
 \begin{align*}
r (1- s  \tilde s) + \tilde s^2 \dfrac{c\delta^2}{\tilde c} \left( r + \dfrac{1}{i\om \varepsilon^2 \delta^2}\right)  &=  \frac{\tilde s^2 c}{i \tilde c \om \eps^2} + O(\om) ,\\
 - \dfrac{c}{\varepsilon} + i\dfrac{\xi_y c}{\lambda} \left(\delta^2 \tilde s \dfrac{c}{\tilde c} \left( r + \dfrac{1}{i\om \varepsilon^2\delta^2}\right) - r s\right)&=  - \dfrac{c}{\varepsilon}  + \frac{\xi_y \tilde s c^2}{\om \eps^2\lambda_2 \tilde c}+ O( \om \lambda_2^{-1}).
 \end{align*}
Taking $\mathsf{U}'_y=1$, we find
\begin{align*}
\mathsf{U}'_x=& \left(  \dfrac{c}{\varepsilon}  - \frac{\xi_y \tilde s c^2}{\om \eps^2\lambda_2 \tilde c}+ O( \om \lambda_2^{-1})\right) \frac{i\tilde c \om \eps^2}{\tilde s^2 c} \left( 1 + O(\om^2 \varepsilon^2)\right)\\
=& \frac{i \om \eps \tilde c}{\tilde s^2} - i\xi_y \lambda_2^{-1} \frac{c}{\tilde s} + O(\omega^2 \varepsilon^2\lambda_2^{-1}  + (\omega\varepsilon)^3),\\
\mathsf{U}'_z=& \lambda_2^{-1} \left( i \xi_y \frac{s}{\tilde s}+  i \xi_x \frac{i \om \eps \tilde c}{\tilde s^2}\right) +   O(\omega^2 \varepsilon^2\lambda_2^{-2}  + (\omega\varepsilon)^3\lambda_2^{-1}).
    \end{align*}
From there, we get
\begin{align*}
   \mathsf{U}_1&= \frac{i\om \varepsilon \tilde c^2}{\tilde s^2} - \frac{i\xi_y \lambda_2^{-1}}{\tilde s}+   O(\omega^2 \varepsilon^2\lambda_2^{-1}  + (\omega\varepsilon)^3),\\
   \mathsf{U}_3&=\frac{i\om \varepsilon \tilde c}{\tilde s} +   O(\omega^2 \varepsilon^2\lambda_2^{-2}  + (\omega\varepsilon)^3\lambda_2^{-1}),\\
   \mathsf{P}&= \frac{1}{\lambda_2\tilde s} (1+ O(\om \varepsilon \lambda_2^{-1}  + \om^2 \varepsilon^2 )),\\
   \mathsf{R}&=\frac{\tilde c}{\tilde s}+   O(\omega \varepsilon\lambda_2^{-2}  + (\omega\varepsilon)^2\lambda_2^{-1}).
\end{align*}
In particular, it follows that
\[
 \mathcal F(\nabla_h^{\bot} {P})e^{\lambda_2 z}= \begin{pmatrix}
     -i\xi_y\\ \lambda_2 \tilde s
 \end{pmatrix}\mathsf{P}
 = \mathsf{U}_h + \begin{pmatrix}
i\om \varepsilon \frac{\tilde c^2}{\tilde s^2} +    O(\omega \varepsilon\lambda_2^{-2}  + (\omega\varepsilon)^2\lambda_2^{-1} )\\
 O(\om \varepsilon \lambda_2^{-1}  + \om^2 \varepsilon^2 )
 \end{pmatrix}.
\]
Hence the motion is \textit{quasi-geostrophic at main order}: rotation is balanced by the pressure gradient. Furthermore, writting an asymptotic expansion for $\mathsf{\tilde U}$, $\mathsf{P}$
\[
\widetilde{\mathsf{U}}^0=\begin{pmatrix}
- \frac{i\xi_y \lambda_2^{-1}}{\tilde s}\\ 1 \\0
\end{pmatrix},\quad \widetilde{\mathsf{U}}^1=\begin{pmatrix}
\frac{i\om  \tilde c^2}{\tilde s^2}\\ 0 \\\frac{i\om  \tilde c}{\tilde s}
\end{pmatrix},\quad \mathsf{P}^0= \frac{1}{\lambda_2\tilde s},
\]
we observe that, denoting $U^0,U^1,P^0 = \mathcal{F}^{-1}(\widetilde{\mathsf{U}}^0,\widetilde{\mathsf{U}}^1,{\mathsf{P}}^0)$
\[\ba
&\cF\left[(\p_t - \Delta_\nu)({U}^0_h + \varepsilon U^1_h) + \varepsilon^{-1} (U^0_h + \varepsilon U^1_h)^\bot + \varepsilon^{-1}\nabla_h P^0\right]e^{\lambda_2 z}\\
=\;& r_2 \mathsf U^0_h + (\mathsf{U}^1_h)^\bot + O(\om^2 \varepsilon)= O (\om \lambda_2^{-1} + \om^2 \varepsilon).\ea
\]
Note that this corresponds exactly to the balance described in Section \ref{sec:QG}, without the $\beta$ term. There are two possible situations:
\begin{itemize}
    \item either the $\beta$ term is negligible in the quasi-geostrophic balance for Munk layers, i.e. in equation \eqref{eq:characteristic}.
    This corresponds to the regime $\beta^{2/3} \nueff^{1/3} \ll | \om| \ll \beta^{3/4} \nueff^{1/4}$ described in \cref{rem:regime}. In this case, we observe that the root $\lambda_2$ found above is equivalent at main order to $\mu_1^+$: the second Ekman layer coincides with one of the Munk layers.

    Therefore the second Ekman layer, corresponding to $\lambda=\lambda_2$ is already included within the quasi-geostrophic part of the solution and can be discarded.

    \item or the   $\beta$ term is not negligible in the quasi-geostrophic balance for Munk layers. This corresponds to the regime $|\om| \lesssim \beta^{2/3} \nueff^{1/3}$ described in \cref{lem:characteristic}, i.e. $| \om| \lesssim \beta |\lambda_2|^{-1}$. Hence $\beta |U^0_1| \gtrsim |U^1_1|$: therefore the $\beta$ term, which has been discarded from the computation of Ekman layers, is in fact of higher order than the rotation term associated to $U^1$. In this case the computation of the Ekman layer is not valid, and the second Ekman layer must be discarded again.
\end{itemize}
In conclusion, we will never keep the eigenvector associated with $\lambda_2$.
Hence there will only be one degree of freedom within the Ekman boundary layer, namely the one associated with $\lambda_1$.

\section{Construction and proof of validity of approximate solutions}
\label{sec:proof-thm}
This section is devoted to the proof of our main results, namely \cref{prop:sol-high-order-simplified}, \cref{thm:stab-periodic} and \cref{thm:stab-Cauchy}.

We first construct an approximate solution at any order for source terms whose Fourier support lies in a compact set (\cref{lem:approx-high-order}).
We then explain how to truncate general source terms in order to ensure that this assumption is satisfied \cref{lem:truncation-source}.
Eventually we prove \cref{thm:stab-periodic} and \cref{thm:stab-Cauchy} thanks to a simple energy estimate.

\subsection{Approximate solution at any order}

The purpose of this Section is to prove the following Lemma, which is a more precise version of \cref{prop:sol-high-order-simplified} under additional assumptions on the source term.
\begin{lemma}
Let $\varepsilon>0$, and assume that $\om,\nu_h,\nu_3, \beta$ satisfy assumption \eqref{hyp:parameters}.

Let $N,m>0$ be arbitrary. Assume that $f$ satisfies (H1)-(H4) for some $q,Q$ depending on $N$ and $m$, and sufficiently large. Assume furthermore that $\hat f(\xi,z)=0$ for $|\xi| \geq R$, with $|R| \ll (\beta/\nueff)^{1/3}$, and let $\mu= \min (\Re(\mu_1^+), \Re(\mu_2^+))/2$, where $\mu_i^\pm$ are defined in \cref{lem:characteristic}.
 
Then there exists $K>0$ depending  on $N$ and $m$ and on the parameters $a,b,d,e$, and an approximate solution of the form
\[
\uapp=\sum_{k=0}^K \varepsilon^k u^k 
\]
where each term $u^k$ in the above sum can be decomposed as an interior part, a Munk boundary layer part, and an Ekman boundary layer part, namely $
u^k= u_{\mathrm{i}}^k + u_{\mathrm M}^k + u_{\mathrm E}^k$, with:
\begin{itemize}
    \item Interior part: for $j=1,3$,
    \[
\|  u_{\mathrm{i},j}^k\|_{H^m} \lesssim \beta^k \mu^{m-3/2} ,\quad \| u_{\mathrm{i},2}^k\|_{H^m} \lesssim \beta^k  \mu^{m-1/2};
\]
    \item Munk boundary layer part: for $j=1,3$,
    \[
\|  u_{\mathrm{M},j}^k\|_{H^m} \lesssim \beta^k \mu^{m-1/2} ,\quad \| u_{\mathrm{M},2}^k\|_{H^m} \lesssim \beta^k  \mu^{m-3/2};
\]
      and the term  $u_{\mathrm{M}}^k$ is exponentially small outside a boundary layer of size $\mu^{-1}$;

    \item Ekman boundary layer part: $\|  u_{\mathrm{E}}^k\|_{H^m} \lesssim \beta^k | \lambda_1|^{m-1/2} $ ; and the term  $u_{\mathrm{E}}^k$ is exponentially small outside a boundary layer of size $\Re(\lambda_1)^{-1}$.
\end{itemize}
Furthermore, $\uapp\vert_{\p\Om}=0$ and $\uapp$ satisfies \eqref{NS-rho} up to a remainder $g_{\mathrm{rem}}$ such that
\[
\sup_{t\geq 0} \int_0^\infty(1+ z^2) \| g_{\mathrm{rem}}(t,\cdot, z)\|_{L^2_{x,y}}^2 \dd z \leq \varepsilon^{N}.
\]

    \label{lem:approx-high-order}
\end{lemma}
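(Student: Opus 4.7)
The plan is to construct the terms $u^k$ inductively, following the hierarchy derived in \cref{sec:heuristique}. Plugging the ansatz $\uapp=\sum_{k=0}^K \eps^k u^k$ (together with analogous expansions of $p$ and $\rho$) into \eqref{NS-rho-bis} and matching powers of $\eps$ yields, at each step $k\geq 0$, a coupling between $u^k$, $p^k$ and lower-order terms, as well as the vertical kinematic identity $u_3^k=-\p_t\p_z p^{k-1}$. Eliminating $u^k_h$ as in the derivation of \eqref{systeme_interieur} and \eqref{eqsurp1b} then produces a closed scalar equation $L^2 p^k = S^k$, in which $S^k$ is an explicit differential polynomial in $p^0,\dots,p^{k-1}$ and $f$.

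At each step I would solve this equation via \cref{prop:QG3D-visc}: the interior contribution $p_{\mathrm{i}}^k := G \ast S^k$ defines $u_{\mathrm{i}}^k$ by geostrophy, after which I would add a Munk-type correction $p_{\mathrm{M}}^k=\cF^{-1}(\sum_{j=1,2} c_j^k e^{-\mu_j^+ z})$ with two free coefficients, and, for $k\geq 2$, an Ekman corrector $u_{\mathrm{E}}^k=\cF^{-1}(c_{\mathrm{E}}^k e^{-\lambda_1 z}\widetilde{\mathsf U^1})$ from \cref{prop:Ekman}. The three free scalar coefficients $c_1^k,c_2^k,c_{\mathrm{E}}^k$ are determined by the three scalar boundary conditions $u^k|_{\p\Om}=0$: $c_{\mathrm{E}}^k$ absorbs the vertical trace of the non-Ekman contributions (which is possible precisely because the second Ekman mode has been discarded as quasi-geostrophic, see \cref{rem:redundancy-Ekman}), after which the $2\times 2$ Munk system \eqref{systeme-c-1} fixes $c_1^k,c_2^k$. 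Assumption~(H4) is what makes this inversion uniform in the Fourier variable despite the $1/\xi_y$ factor, while (H3) combined with \cref{lem:exp-Green} and \cref{lem:CL-Gstar} propagates the exponential decay in $z$ throughout the induction.

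The main obstacle is that, unlike in the flat-boundary case, Munk and Ekman layers are localized in the \emph{same} region near $z=0$, and both generate nontrivial residuals when plugged into the \emph{full} operator of \eqref{NS-rho-bis}: the Munk profile satisfies only the QG equation $L^2 p=0$ and leaves behind non-hydrostatic pressure and mass-conservation pieces, while the Ekman profile satisfies only the $f$-plane system of \cref{sec:Ekman} and produces a residual of order $\eps\beta$ from the neglected $\beta y\, e_3\wedge u$ term. These residuals must be fed back into $S^{k+1}$ and handled by auxiliary corrector subterms $\bar u_{\mathrm{M}}^k,\bar u_{\mathrm{E}}^k$ assembled from earlier stages, in the spirit of \eqref{def:baru1}. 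The delicate bookkeeping consists in verifying that, although each iteration loses a polynomial factor in $\eps^{-1}$ (from the large eigenvalues $\mu_j^+\sim(\beta/\nueff)^{1/3}$ and $\lambda_1\sim|\tan\alpha|/(\nueff^{1/2}\eps|\om|^{1/2})$ and from differentiations), it gains a factor of $\eps$ from the ansatz; hence choosing $K$ sufficiently large in terms of $N$, $m$, and the exponents $a,b,d,e$ forces the final residual to be of size $\eps^N$ in the weighted norm $L^\infty_t L^2_{x,y}((1+z^2)\dd z)$.

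The announced $H^m$ estimates should then follow by direct inspection of the explicit formulas: the interior parts inherit their bounds from \cref{prop:QG3D-visc} and \cref{lem:CL-Gstar}, a Munk term $c_j^k(\xi)e^{-\mu_j^+ z}(-i\xi_y,\, ci\xi_x+s\mu_j^+)^\top$ produces a factor $\mu$ per $z$-derivative together with a factor $\mu^{-1/2}$ from the $L^2_z$ norm, which explains the asymmetry $\mu^{m-1/2}$ versus $\mu^{m-3/2}$ between $u_{\mathrm{M},2}^k$ (amplified by the extra $s\mu_j^+$ in the $\nabla_h^\bot$) and $u_{\mathrm{M},1}^k,u_{\mathrm{M},3}^k$; the Ekman estimate is analogous with $|\lambda_1|$ in place of $\mu$. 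An inductive check on the free coefficients based on \eqref{systeme-c-1} shows moreover that $c_j^k,c_{\mathrm{E}}^k=O(\beta^k\mu^{-1})$, which yields the claimed $\beta^k$ prefactors.
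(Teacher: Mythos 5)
Your proposal follows essentially the same roadmap as the paper's proof: an induction on $k$ in which each $u^k$ is split into interior, Munk and Ekman parts, the interior/Munk parts are produced from a QG hierarchy $L^2 p^k = F^k$ solved via the Green function of \cref{prop:QG3D-visc}, the Ekman part is built from the $f$-plane system, and the three boundary-layer coefficients $c_1^k,c_2^k,c_{\mathrm E}^k$ are fixed by $u^k\vert_{\p\Om}=0$, with $c_{\mathrm E}^k$ absorbing the vertical trace first. Two points, however, need tightening before the argument closes.

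First, the Ekman recursion. In the paper the residual $-\frac{\beta}{\eps} y (u^{k-1}_{\mathrm E,h})^\bot$ is \emph{not} fed back into the QG source $S^{k+1}$; it remains inside the Ekman family, which has its own self-contained hierarchy. This matters structurally: because the source is itself proportional to $e^{-\lambda_1 z}$, the corrector cannot be a pure exponential, and one is forced to take the Ansatz $\widehat{u_{\mathrm E,a}^k}\propto \beta^k Q^k_a(Z)e^{-Z}$ with $Z=\lambda_1 z$ and $Q^k_a$ a polynomial of degree growing in $k$, then split $Q^k_a=\overline{Q^k_a}+c^k_{\mathrm E}\mathsf U_a$ with $\overline{Q^k_a}(0)=0$. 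Your phrase ``auxiliary corrector subterms $\bar u^k_{\mathrm E}$'' points in the right direction, but writing $u^k_{\mathrm E}=\cF^{-1}(c^k_{\mathrm E}e^{-\lambda_1 z}\widetilde{\mathsf U^1})$ alone is insufficient: without the polynomial-in-$Z$ factor the recursion does not close, and without keeping the Ekman residual out of $S^{k+1}$ one would be asking the QG equation to absorb a non-quasi-geostrophic source.

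Second, the coefficient size. Inverting the $2\times 2$ system whose determinant is $-i\xi_y s(\mu_2^+-\mu_1^+)\sim\xi_y\mu$, against traces of size $O(\beta^k)$ (components $1,3$) and $O(\beta^k\mu)$ (component $2$), yields $c_j^k=O\!\left(\beta^k(1+|\xi_y|^{-1})\right)$, hence $O(\beta^k)$ after using (H4) to control the low-$\xi_y$ region; the same holds for $c^k_{\mathrm E}$. Your claim $c_j^k=O(\beta^k\mu^{-1})$ is off by a factor of $\mu$ and would be inconsistent with the paper's propagated bounds $\|\p_z^{m_z}u^k_{\mathrm M,j}(z)\|_{L^2_{x,y}}\lesssim\beta^k\mu^{m_z}e^{-\mu z}$ for $j=1,3$ and $\beta^k\mu^{m_z+1}e^{-\mu z}$ for $j=2$ (which reflect the intensification of $u_{\mathrm M,2}$). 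Your verbal explanation of the asymmetry is in fact the right one; only the quoted power of $\mu$ in $c_j^k$ needs correcting.
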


\begin{remark}
\begin{itemize}
\item Note that each term in the asymptotic expansion will in fact depend on the parameters $\om, \beta, \nu_h$ and $\nu_3$, and therefore on $\varepsilon$. In view of the properties satisfied by $u^k$, our asymptotic expansion is in fact an expansion in powers of $\varepsilon\beta \ll 1$. However, keeping the description above allows us to perform more compact computations, to preserve the structure of the hierarchy,
and to avoid discussions on the relative sizes of the parameters. We emphasize that each term $u^k$ can be determined explicitly (and shall be to compute the next order term).

\item As already mentioned, the term ``interior'' is slightly misleading since each term $u_{\mathrm{i}}^k$ will contain a boundary layer type term (see \cref{lem:CL-Gstar}). However this boundary layer term will be weaker than the one contained in $u_{\mathrm{M}}^k$, which explains why the upper-bounds for  $u_{\mathrm{i}}^k$ are smaller than the ones for $u_{\mathrm{M}}^k$ by a power of $\mu$.

\item Assumption \eqref{hyp:parameters} ensures that $\mu$ is bounded from above and below by $C (\beta/\nueff)^{1/3}$ (see \cref{lem:characteristic}). This fact will be used repeatedly in the proof.
\end{itemize}  
\end{remark}
\begin{proof}
First, we take a similar Ansatz for the pressure and density, namely $p=\sum_k \eps^k p^k$, $\rho=\sum_k \rho^k$, where each $p^k, \rho^k $ is decomposed into an interior part, a Munk part and an Ekman part. We will construct each family recursively. We recall that the terms corresponding to $k=0$ and $k=1$ have already been constructed in \cref{sec:correcteurs}. We will in fact propagate estimates that are more precise than the ones announced in the Lemma. More precisely, we will prove that for all $k$, there exists $m_k, C_k$ such that for $m_x+ m_y + m_z\leq m_k$, for $j=1,3$,
\begin{align}
\label{est:ui13}\|\p_x^{m_x} \p_y^{m_y} \p_z^{m_z}  u_{\mathrm{i}, j}^k(z)\|_{L^2_{xy}}&\leq C_k \beta^k (e^{-\gamma z} + \mu^{(m_z-1)_+} e^{-\mu z}) ,\\ 
\label{est:ui2}\|\p_x^{m_x} \p_y^{m_y} \p_z^{m_z}  u_{\mathrm{i}, 2}^k(z)\|_{L^2_{xy}}&\leq C_k \beta^k (e^{-\gamma z} + \mu^{m_z} e^{-\mu z}) ,\\
\label{est:uM13}\|\p_x^{m_x} \p_y^{m_y} \p_z^{m_z}  u_{\mathrm{M}, j}^k(z)\|_{L^2_{xy}}&\leq C_k \beta^k \mu^{m_z} e^{-\mu z},\\  \|\p_x^{m_x} \p_y^{m_y} \label{est:uM2}\p_z^{m_z}  u_{\mathrm{M}, 2}^k(z)\|_{L^2_{xy}}&\leq C_k \beta^k\mu^{m_z+1} e^{-\mu z}.
\end{align}
Additionally, the support in the tangential Fourier variables of each term in the expansion will be included in $B(0, R)$.
The sequence $(m_k)$ will satisfy $m_{k+1}=m_k-6$ due to some finite loss of derivatives at every step. Eventually, we will choose the source term with sufficient regularity, so that $m_k\geq m$ for all $k\in \{0,\cdots, K\}$.

\noindent\textit{\textbf{Iterative construction of the interior part.}} We first consider the interior part, and we omit momentarily the subscript $\mathrm{i}$ in order not to burden the notation.
Writing the balance for the terms of order $\eps^k$ in \eqref{L1NSrho} (arguing for now as if $\om$, $\beta$, $\nu$ were of order one), we find, for $k\geq 0$,
\begin{align}
    \label{eq:uk}(\p_t - \Delta_\nu) u_h^k + \beta y (u_h^k)^{\bot} + (u_h^{k+1})^\bot + \nabla_h p^{k+1} &=\delta_{k,0} f,\\
   \label{eq:u3k} (\p_t-\Delta_\nu) u_3^k + \p_3 p^{k+3} &= - \rho^{k+3},\\
    \p_t \rho^k - u_3^{k+1}&=0.
\end{align}
It follows in particular, setting $u^k, \rho^k, p^k=0$ for $k<0$, that
\begin{equation}\label{eq:uk-recurrence}
\ba
   u^{k+1}=& \begin{pmatrix}
\nabla_h^{\bot} p^{k+1}\\ 0
    \end{pmatrix} + \begin{pmatrix}
        (\p_t - \Delta_\nu) (u_h^k)^\bot - \beta y u^k_h\\ \p_t \rho^k
    \end{pmatrix},\\
    \rho^{k+1} =& - \p_3 p^{k+1} - (\p_t - \Delta_\nu) u_3^{k-2}. \ea
\end{equation}
Hence we find that each term in the expansion is the sum of a geostrophic part $\begin{pmatrix}
    \nabla_h^\bot p^{k+1}\\0
\end{pmatrix}$, which remains to be determined, and of an explicit part, which is non geostrophic but completely determined by lower order terms.
In order to to determine $p^{k+1}$, we follow computations similar to the ones performed in \cref{sec:correcteurs} (see in particular \cref{lemma:structure-u-1}), and we find, for all $k\geq 0$,
\be
\label{eq:QG-k}
\p_t \Delta p^{k+1} + \beta \p_1 p^{k+1} - \Delta_\nu \Delta_h p^{k+1}= F^{k+1},
\ee
where the source term $F^{k+1}$ depends only on lower order terms. More precisely,
\begin{align*}
    F^{k+1}:=&-(\p_t - \Delta_\nu)^2  \divh u_h^{k} - (\p_t - \Delta_\nu)\left[  - \beta y \nabla_h^\bot \cdot u^{k}_h + \beta u^{k}_1\right]\\
    &+ \beta y \left[ (\p_t - \Delta_\nu) \nabla_h^\bot u_h^{k} + \beta y \divh u^{k}_h + \beta u^{k}_2\right]\\
    &-\beta (\p_t - \Delta_\nu) u_1^{k} + \beta^2 y u_2^{k}\\
    &+ \p_t \p_3 (\p_t - \Delta_\nu) u_3^{k-2} .
\end{align*}
It follows that $F^{k+1}$ is fully determined by $u^j$ for $j\leq k$. Using \cref{prop:QG3D-visc} and re-introducing the subscript $\mathrm{i}$, we take $p^{k+1}_\mathrm{i} =G\ast F^{k+1}_\mathrm{i}$, which completes the definition  of $(u^{k+1}_{\mathrm{i}}, \rho^{k+1}_{\mathrm{i}}, p^{k+1}_{\mathrm{i}})$.

Assume that estimates \eqref{est:ui13}, \eqref{est:ui2} are satisfied up to rank $k-1$. Then, using \cref{lem:exp-Green} and \cref{lem:CL-Gstar}, we infer that for $m_x+ m_y + m_z \leq m_{k-1}-5$, 
\[
\| \p_x^{m_x} \p_y^{m_y} \p_z^{m_z} p^k_{\mathrm{i}}(z)\|_{L^2_{x,y}}\lesssim \beta^k  (e^{-\gamma z} + \mu^{(m_z-1)_+} e^{-\mu z}).
\]
We infer that estimates \eqref{est:ui13}, \eqref{est:ui2} hold for $u_{\mathrm{i}}^k$ with $m_k= m_{k-1}-6$, and similarly
\[
\| \p_x^{m_x} \p_y^{m_y} \p_z^{m_z} \rho^k_{\mathrm{i}}\|_{L^2_{x,y}} \lesssim \beta^k  (e^{-\gamma z} + \mu^{m_z} e^{-\mu z}).
\]

\medskip

\noindent\textit{\textbf{Iterative construction of the Munk boundary layer part.}} The construction of Munk boundary layer terms is identical: each term $p^k_{\mathrm M}$ satisfies an equation of the form
\[
\p_t \Delta p^k_{\mathrm M} + \beta \p_1 p^k_{\mathrm M} - \Delta_\nu \Delta_h p^k_{\mathrm M}= F^k_{\mathrm M},
\]
where the term $F^k_{\mathrm M}$ is determined, as before, in terms of $u^j_{\mathrm M}$ for $j\leq k-1$. 
We then obtain $(u^k_{\mathrm{M}}, \rho^k_{\mathrm{M}})$ thanks to the expression \eqref{eq:uk-recurrence}.

There are two main differences with the construction of the interior term:
\begin{itemize}
\item First, following \cref{prop:QG3D-visc}, we will take
\[
p^k_{\mathrm{M}} = G \ast F^k_{\mathrm M} + \mathcal F^{-1}\left(\sum_{j=1,2} c^k_j e^{-\mu_j^+ z}\right),
\]
where the coefficients $c^k_j$ will be determined later. We merely anticipate that $| c^k_j| \lesssim \beta^k$.     

\item Second, we need to be careful with the structure of the velocity when performing the estimate of $F^k_{\mathrm{M}}$ and $p^k_{\mathrm M}$. Note that the induction assumptions \eqref{est:uM13}, \eqref{est:uM2} are compatible with the intensification of the Northward velocity.
Let us estimate for instance the first term in the right-hand side of $F^k_{\mathrm M}$. Using the assumptions on $u^{k-1}_{\mathrm M}$, we have
\[
\| (\p_t - \Delta_\nu)^2  \divh u_{\mathrm M, h}^{k-1} (z)\|_{H^s_{x,y}}\lesssim (| \om|^2 +  \nueff^2  \mu^4) \mu \beta^{k-1} e^{-\mu z}.
\]
Since $|\om| \ll \beta$ and $ \mu \lesssim(\beta/\nueff)^{1/3} $, $| \om|^2 +  \nueff^2  \mu^4\ll \beta^2$. Estimating the other terms in the same fashion, we infer that
\[
\| F^k_{\mathrm{M}}(z)\|_{H^{s-5}} \lesssim \beta^{k+1} \mu e^{-\mu  z},
\]
and therefore, using \cref{prop:QG3D-visc} and \cref{lem:exp-Green} (and recalling that $\mu \leq \Re(\mu_j^+)/2$ by definition),
\[
\| p^k_{\mathrm M}\|_{H^{s-5}} \lesssim \beta^k  e^{-\mu z}.
\]
\end{itemize}
From there, we easily obtain the estimates on each component of $u^k$, recalling that $\mu=(\beta/\nueff)^{1/3}$ and that $| \om| \lesssim \beta^{2/3} \nueff^{1/3}.$ We also use and propagate the induction assumption 
\[
\| \rho^k_{\mathrm{M}}\|_{H^s}  \lesssim \beta^k \mu e^{-\mu z}. 
\]
For further purposes, we denote by $\bar u^k_{\mathrm{M}}$ the velocity
\[
\bar u^k_{\mathrm{M}} = \begin{pmatrix}
\nabla_h^\bot (G\ast F^k_\mathrm{M}) + (\p_t - \Delta_\nu) (u_{\mathrm{M}, h}^{k-1})^\bot - \beta y u_{\mathrm{M}, h}^{k-1}\\
\p_t \rho^{k-1}_\mathrm{M}
\end{pmatrix},
\]
so that
\be\label{decomp-uMk}
\mathcal F( u^k_{\mathrm{M}}) = \mathcal F( \bar u^k_{\mathrm{M}} ) + \sum_{j\in \{1,2\}} c_j^k e^{-\mu_j^+ z}\begin{pmatrix}
-i \xi_ y\\ c i \xi_x+ s \mu_j^+\\ 0
\end{pmatrix},
\ee
for some coefficients $c_1^k, c_2^k$ which remain to be determined.
\medskip

\noindent\textit{\textbf{Iterative construction of the Ekman boundary layer part.}}

We now address the construction of the Ekman boundary layer part.
Note that the Ekman boundary layer at main order is an exact solution of \eqref{NS-rho}, up to the $\beta$ term. 
Therefore, we will construct the sequence $u_{\mathrm{E}}^k$ so that at every step,
\begin{align*}
\p_t u_{\mathrm{E},h}^k + \frac{1}{\eps} (u_{\mathrm{E},h}^k)^\bot + \frac{1}{\eps}\nabla_h p_{\mathrm{E}}^k - \Delta_\nu  u_{\mathrm{E},h}^k &= -\frac{\beta y} {\varepsilon} (u_{\mathrm{E},h}^{k-1})^\bot,\\
\p_t u_{\mathrm{E},3}^k + \frac{1}{\varepsilon \delta^2}\p_3 p_{\mathrm{E}}^k - \Delta_\nu u_{\mathrm{E},3}^k + \frac{1}{\varepsilon \delta^2} \rho_{\mathrm{E}}^k&=0,\\
\p_t \rho_{\mathrm{E}}^k - \frac{1}{\varepsilon} u_{\mathrm{E},3}^k &=0,\\
\Div  u_{\mathrm{E}}^k &=0.
\end{align*}
Following the computations of \cref{sec:Ekman}, we find that the tangential Fourier transform of $u_{\mathrm{E},x}^k$ satisfies an ODE of the form
\[
P_{\varepsilon,\om, \nu}(\xi_x,\xi_y, \p_z) \widehat{u_{\mathrm{E},x}^k} = S^{k-1},
\]
where the right-hand side $S^{k-1}$ involves the $\p_{\xi_y}$-derivative of the function $u_{\mathrm{E},h}^{k-1}$. The expression of the polynomial $P_{\varepsilon,\om, \nu}$ is complicated, but fortunately we will not need to compute it exactly. It is sufficient to note that $\lambda_1$ is a root of $P_{\varepsilon,\om, \nu}$. This prompts us to look for each term in the expansion as a polynomial multiplied by an exponential. 
More precisely, since $u_{\mathrm{E}}^0= u_{\mathrm{E}}^1=0$, we have $S^1=0$, and therefore $\widehat{u_{\mathrm{E}}^2}= c_{\mathrm{E}}^2 e^{-\lambda_1 z}\widetilde{\mathsf{U}}^2$ for some coefficient $c_{\mathrm{E}}^2$ which will be determined later.
For the higher order terms, 
 following the computations from \cref{sec:Ekman}, we will take the following Ansatz:
\be\label{Ansatz-Ekman-k}\ba
\widehat{u_{\mathrm{E},x}^k}= \beta^k Q^k_x (Z) e^{-Z},\qquad
\widehat{u_{\mathrm{E},y}^k}= \beta^k \om \eps Q^k_y (Z) e^{-Z},\qquad
\widehat{u_{\mathrm{E},z}^k}= \beta^k \lambda_1^{-1} Q^k_z (Z) e^{-Z},\\
\widehat{\rho_{\mathrm{E}}^k}= \beta^k \frac{1}{i\om \eps} Q^k_\rho (Z) e^{-Z},\qquad
\widehat{p_{\mathrm{E}}^k}= \beta^k \frac{1}{i\eps \om \lambda_1} Q^k_p (Z) e^{-Z},
\end{aligned}\ee
where $Z=\lambda_1 z$, and $Q^k_a$ are complex polynomials in $Z$ with bounded coefficients. Plugging this Ansatz into the equation, we find the following relations between the polynomials:
\begin{align*}
-\p_Z (Q_z^k(Z) e^{-Z}) e^Z= i\xi_x Q_x^k + \om \eps Q_y^k \quad \text{(divergence-free condition),}\\
Q_\rho^k = s Q_x^k + c \lambda_1^{-1} Q_z^k \quad \text{(conservation of mass),}\\
i\om \eps^2 \delta^2 \mathfrak r(\p_Z)\left( (s Q_x^k + c \lambda_1^{-1} Q_z^k)e^{-Z}\right) + Q^k_\rho (Z) e^{-Z} + (c\p_z + i s\xi_x \lambda_1^{-1}) (Q^k_p(Z) e^{-Z})=0 \\\hspace{7cm}{ \text{(vertical momentum balance),}}
    \end{align*}
where $\mathfrak r(\p_Z)= i\om + \nu_h (c^2 \xi_x^2 + \xi_y^2) + \nu_3 s^2 \xi_x^2 - \nueff \lambda_1^2 \p_Z^2 + 2 i cs\xi_x \lambda_1(\nu_h - \nu_3) \p_Z$. Note that the third identity implies in particular that the hydrostatic equilibtrium is satisfied at main order: $Q_\rho^k\simeq- c e^Z \p_Z( Q_p^k (Z) e^{-Z})$. Looking at the second component in the vertical momentum balance, we get at main order
\[
\frac{is^2}{c^2}\p_Z^2 (Q^k_y e^{-Z})e^Z + c Q^k_x  \simeq Q^{k-1}_x.
\]
This allows us to determine $Q_k^y$ in terms of $Q^k_x$ and $Q^{k-1}_x$. Plugging this expression into the first component of the momentum balance, we find an ODE on $Q^k_x$. From there, we deduce all the other polynomials.

Let us stress that the coefficients of order zero in each $Q^k_a$ remain undetermined at this stage, as they belong to the kernel of the differential operator.
They correspond to a multiple of the Ekman layer at main order. Hence, we will write $Q^k_a$ as $\overline{Q^k_a} + c^k_{\mathrm{E}} \mathsf{U}^1_a$, where $\mathsf{U}^1_a$ is the coordinate along the $a$-component of the generalized eigenvector $\mathsf U^1$ associated with $\lambda_1$ (see \cref{sec:Ekman}), 
and the polynomial $\overline{Q^k_a} $ vanishes at $Z=0$.
We denote by $\bar u_{\mathrm E}^k$ the velocity obtained when $Q^k_a$ is replaced by $\overline{Q^k_a}$ in \eqref{Ansatz-Ekman-k}, so that
\be\label{decomp-uEk}
\mathcal F(u_{\mathrm{E}}^k)= \mathcal F(\bar u_{\mathrm{E}}^k) + c^k_{\mathrm{E}} e^{-\lambda_1 z} \widetilde{\mathsf{U}}^1,
\ee
where $\widetilde{\mathsf{U}}$ are the first three components of ${\mathsf{U}}$.

\medskip

\noindent\textit{\textbf{Determination of the boundary layer coefficients $c^k_1, c^k_2, c^k_{\mathrm E}$.}}

We then identify the coefficients $c^k_j$ and $c^k_{\mathrm{E}}$ coming respectively from the Munk boundary layer part and from the Ekman layer at order $k$. To that end, we require that
\[
u^k_{\mathrm i} + u^k_{\mathrm M}  + u^k_{\mathrm E}=0 \qquad\text{on } \p\Om.
\]
Recalling \eqref{decomp-uMk} and \eqref{decomp-uMk}, this implies
\[
 \sum_{j\in \{1,2\}} c_j^k \begin{pmatrix}
-i \xi_ y\\ c i \xi_x+ s \mu_j^+\\ 0
\end{pmatrix} + c^k_{\mathrm{E}} \begin{pmatrix}
    \mathsf{U}^1_1\\  \mathsf{U}^1_2\\   \mathsf{U}^1_3
\end{pmatrix}
= - \mathcal F\left( u^k_{\mathrm i} + \bar u^k_{\mathrm M}  + \bar u^k_{\mathrm E}\right)_{| \p\Om},
\]
where we recall that the right-hand side is completely determined by lower order terms, and $\mathsf U_1^1 \simeq \cos \alpha$, $\mathsf U_2^1 \simeq i \cos^3 \alpha \om \eps/\sin^2 \alpha$, $\mathsf U_3^1 \simeq \sin \alpha$. 

Therefore we first choose the coefficient $c_k^\mathrm E$ so that
\[
 c^k_{\mathrm{E}}= - \frac{1}{\mathsf U_3^1}  \mathcal F\left( u^k_{\mathrm i} + \bar u^k_{\mathrm M}  + \bar u^k_{\mathrm E}\right)\cdot e_3 \vert_{ \p\Om}.
\]
It follows from the previous estimates that $ |c^k_{\mathrm{E}}|\lesssim \beta^k$. The term $u^k_{\mathrm E}$ is now fully determined and satisfies the estimates announced in the statement of the Lemma.

We then compute the coefficients $c_j^k$ for $j\in \{1,2\}$ by inverting the matrix in the first term of the left-hand side. More precisely,
\[
\begin{pmatrix}
c_1^k\\
    c_2^k
\end{pmatrix}
=
-\frac{1}{i\xi_y s (\mu_2 - \mu_1)} \begin{pmatrix}
c i \xi_x+ s \mu_2 & i \xi_y \\
- c i \xi_x- s \mu_1 & - i \xi_y 
    \end{pmatrix}
\mathcal F\left( u^k_{\mathrm i} + \bar u^k_{\mathrm M}  +  u^k_{\mathrm E}\right)_{h| \p\Om}
\]
It follows from the estimates on $u^k_{\mathrm i} $, $ \bar u^k_{\mathrm M} $ and $ u^k_{\mathrm E}$, and from the assumptions on the support in Fourier of $u^k_a$ that $c_j^k= O(\beta^k).$ Hence the $k$-th Munk corrector is now fully determined, which completes the construction of $u^k$.

\medskip

\noindent\textit{\textbf{Evaluation of the remainder and conclusion.}} 

We now evaluate the remainder associated with each family.

\begin{itemize}
    \item The remainder associated with the interior part is
    \[
\begin{pmatrix}
\eps^K (\p_t - \Delta_\nu) u_{\mathrm i, h}^{K} + \eps^K \beta y (u_{\mathrm i, h}^K)^{\bot}\\
\eps^{K-1} (\p_t - \Delta_\nu) u_{\mathrm i, 3}^{K-1} + \eps^K (\p_t - \Delta_\nu) u_{\mathrm i, 3}^{K}\\
\eps^K \p_t \rho^K_{\mathrm i}.
\end{pmatrix}    \]
According to the previous estimates, the size of this term in $L^\infty_t(L^2(\Om, (1+z^2) \dd x \dd y \dd z))$ is
\[
\begin{pmatrix}
O(\beta (\eps \beta)^K)\\
O (\om (\eps \beta)^{K-1})\\O (\om (\eps \beta)^{K})
\end{pmatrix}.
\]
Recalling the assumptions on the parameters $\beta$ and $\om$ and choosing $K$ sufficiently large so that $-a + K(1-a) \geq N$ and $-b + (K-1)(1-a) \geq N$, we find that the right hand side is $O(\eps^N)$.

\item In a similar way, we now compute the size of the remainder associated with the Munk boundary layer part. The expression of the remainder is identical to the one of the interior term, replacing the subscript $\mathrm{i}$ by $\mathrm{M}$.
Recalling the estimates on $u_\mathrm{M}^k$ and using the estimate 
\[
\| e^{-\mu z} \|_{L^2_z}\lesssim \mu^{-1/2},
\]
we find that the size of the remainder associated with $u_{\mathrm M}$ is
\[
\begin{pmatrix}
O(\beta \mu^{1/2} (\eps \beta)^K)\\
    O (\om \mu^{-1/2}(\eps \beta)^{K-1})\\
    O (\om \mu^{1/2} (\eps \beta)^{K})
\end{pmatrix}
\]
Once again, choosing $K$ large enough, each error term is $O(\eps^N)$.

\item Eventually, the remainder term associated with the Ekman boundary layer part is
\[
\begin{pmatrix}
 \eps^K \beta y (u_{\mathrm E, h}^K)^{\bot}\\ 0\\0
    \end{pmatrix}
    = \begin{pmatrix}
O( \beta (\eps \beta)^K (\Re \lambda_1)^{-1/2})\\0\\0
    \end{pmatrix} = O(\eps^N)
\]
for $K$ large enough. 

Let us finally comment on the regularity of the solution and of the data. A finite number of derivatives are ``consumed'' with each step of the construction, namely, if $u^k\in H^s$, then $u^{k+1} \in H^{s-6}$. As a consequence, if we wish to ensure that $u^k\in H^m$ for some $m\geq 0$ for all $k\in \{0, \cdots K\}$, then we must have $u^0 \in H^{m+ 6 K} $ and thus $f\in H^{m+6K}$. Hence, choosing $f\in H^s$ for some $s$ large enough depending on $N$, we obtain the desired result.
\end{itemize}
\end{proof}

\begin{remark}
    \label{rem:delta-m}
If the aspect ratio $\delta$ is such that $\delta= \eps^{M/2}$ for some $M\in \N$, with $M$ possibly different from 2, the above strategy remains valid with minor adjustements. The main difference lies in the fact that \eqref{eq:u3k} needs to be changed into
\[
(\p_t - \Delta_\nu) u_3^k + \p_3 p^{k+M+1}= - \rho^{k+M+1}.
\]
Therefore the term involving $u_3^j$ for $j<k$ in $F^k$ needs to be modified, and becomes $\p_t \p_3(\p_t-\Delta_\nu)u_3^{k-M-1}$. 

The spirit of the proof and the estimates remain otherwise unchanged.
\end{remark}

\subsection{Truncation of the source term}

Our construction of an approximate solution relies on the assumption that the source term $f$ is compactly supported in Fourier space. It is easy to get rid of this assumption, provided the soure term is sufficiently smooth:
\begin{lemma}
	Assume that \eqref{hyp:parameters} is satisfied, and let $R= \eps^{-\kappa}$ with $\kappa$ such that $0<\kappa<(a+d)/3$ (i.e. $1\ll R \ll (\beta/\nu_h)^{1/3}$).

	 Let $N\geq 0$ be arbitrary. 
There exists $s\geq 0$ such that if $S\in H^s(\R^2)$, then
\[
\left\| \cF^{-1} (\mathbf 1_{|\xi| >R} \hS(\xi)) \right\|_{L^2(\R^2)} \leq \eps^N \| S\|_{H^s}.
\]

\label{lem:truncation-source}
\end{lemma}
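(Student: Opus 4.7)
The plan is to reduce the claim to an elementary Plancherel/Sobolev tail estimate. Writing $T_R S := \mathcal F^{-1}(\mathbf 1_{|\xi|>R} \hat S)$, Plancherel gives
\[
\| T_R S\|_{L^2(\R^2)}^2 = \int_{|\xi|>R} |\hat S(\xi)|^2 \dd \xi.
\]
On the region $|\xi|>R$ one has the trivial inequality $1 \leq (|\xi|/R)^{2s}$ for any $s\geq 0$, so I would bound
\[
\int_{|\xi|>R} |\hat S(\xi)|^2\dd \xi \leq R^{-2s}\int_{\R^2}|\xi|^{2s}|\hat S(\xi)|^2\dd\xi \leq R^{-2s}\|S\|_{H^s(\R^2)}^2.
\]

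Next I would translate this into the $\eps$-scaling. Since $R=\eps^{-\kappa}$, the estimate becomes $\|T_R S\|_{L^2}\leq \eps^{\kappa s}\|S\|_{H^s}$, so it suffices to choose any integer $s\geq N/\kappa$. The assumption $\kappa>0$ (which follows from $0<\kappa<a-b$, noting $a-b>0$ since $b\leq (2a-d)/3 \leq 2a/3<a$ under \eqref{hyp:parameters}) is exactly what guarantees $R\to\infty$ and hence a genuine gain.

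There is no serious obstacle: the argument is an entirely standard high-frequency cutoff, and the only ``content'' is to record the value $s=\lceil N/\kappa\rceil$ explicitly, together with the observation that the parameter range for $\kappa$ in the statement is non-empty. The role of the upper bound $\kappa<a-b$ plays no part in this particular lemma; it is imposed so that the compact-support assumption $R\ll \beta/|\om|=\eps^{-(a-b)}$ required by \cref{lem:approx-high-order} is met when the truncated source is fed into the construction of the approximate solution.
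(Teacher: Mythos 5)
Your argument is correct and essentially the same as the paper's: both are elementary Plancherel high-frequency tail estimates concluded by taking $s$ large. The only minor difference is that the paper invokes a Cauchy--Schwarz step and records the weaker bound $R^{1-s}\|S\|_{H^s}$, whereas your direct sup-bound $|\xi|^{-2s}\le R^{-2s}$ on $\{|\xi|>R\}$ gives the cleaner $R^{-s}\|S\|_{H^s}$; either suffices, and your closing remarks on why $\kappa>0$ and on the role of the upper bound $\kappa<a-b$ are accurate.
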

\begin{proof}
The Cauchy-Schwarz inequality ensures that
\[
\left\| \cF^{-1} (\mathbf 1_{|\xi| >R} \hS(\xi)) \right\|_{L^2(\R^2)} \leq \| S \|_{H^s} \left(\int_{|\xi|>R} |\xi|^{-2s} \dd \xi\right)^{1/2} \lesssim  \| S \|_{H^s} R^{1-s}.
\]
Picking $s$ such that $\kappa (s-1)\geq N$, we obtain the desired result.
\end{proof}

\subsection{Periodic stability}
We are now ready to prove Theorem \ref{thm:stab-periodic}. We first truncate the source term $f$ for large frequencies as in \cref{lem:truncation-source}, so that the remainder is $O(\eps^N)$ in $L^2((1+z^2)\dd x\dd y\dd z)  $.

Let $(u,\rho)$ be an exact, periodic solution of \eqref{NS-rho}, and let $(\uapp ,\rho^\text{app})$ be the approximate solution constructed in \cref{lem:approx-high-order} with $m=1$ and $N$ te be determined, and with $f$ replaced by $\cF^{-1} (\mathbf 1_{|\xi| \leq R} \widehat{f})$.
We set $v=u-\uapp$, $\varphi =\rho-\rho^{\mathrm{app}}$.
Then $(v, \varphi)$ is a solution of \eqref{NS-rho} with the remainder $g_{\mathrm{rem}} + \cF^{-1} (\mathbf 1_{|\xi| >R} \widehat{f}(\xi))= g'_{\mathrm{rem}}$, and $g'_{\mathrm{rem}}= O(\eps^N) $ in $L^2((1+z^2)\dd x\dd y\dd z)  $.

We now take the scalar product of \eqref{NS-rho} (written for $(v, \varphi)$) with $(v_h, \delta^2 v_3, \varphi)$ and integrate over $\Om$, using the no-slip boundary condition and the incompressibility of the fluid. We obtain, for every $t\in \R$,
\be\label{est:energy}\ba
&\frac{1}{2} \frac{d}{dt}\int_\Om (| v_h|^2 + \delta^2 | v_3|^2 + |\varphi|^2) + \int_\Om (\nu_h | \nabla_h v_h|^2 + \nu_h \delta^2 |\nabla_h v_3|^2 + \nu_3 | \p_3 v_h|^3 + \nu_3 \delta^2 | \p_3 v_3|^2)\\
\leq &\int_\Om |g'_{\mathrm{rem}}\cdot (v_h, \delta^2 v_3, \varphi)|.\ea
\ee
We then integrate in time and obtain
\[
\int_0^T \int_\Om (\nu_h | \nabla_h v_h|^2 + \nu_h \delta^2 |\nabla_h v_3|^2 + \nu_3 | \p_3 v_h|^3 + \nu_3 \delta^2 | \p_3 v_3|^2)
\leq \int_0^T\int_\Om |g'_{\mathrm{rem}}\cdot   (v_h, \delta^2 v_3, \varphi)|.
\]
We then make the following observations:
\begin{itemize}
    \item First, according to the Hardy inequality, for $i\in \{1,2,3\}$, since $v\vert_{z=0}=0$,
    \[
\int_0^\infty \frac{1}{z^2}| v_i(\cdot, z)|^2 \dd z \leq C \int_0^\infty (\p_z v_i(\cdot, z))^2\dd z.
        \]
As a consequence,
\[
\int_0^T\int_\Om |g'_{\mathrm{rem}, i}\cdot  v_i|\lesssim \left(\int_0^T \int_\Om | \p_z v_i|^2 \right)^{1/2} \left(\int_0^T \int_\Om z^2 |g'_{\mathrm{rem}, i}(t,x,y,z)|^2 \dd x\,\dd y\,\dd z\,\dd t\, \right)^{1/2}.
\]

\item Furthermore, using the Poincaré inequality together with the fact that the time averages of $\rho$ and $\rho^{\mathrm{app}}$ vanish, we have
\[
\int_0^T | \varphi(t,\cdot)|^2 \dd t \leq C_T \int_0^T | \p_t \varphi(t,\cdot )|^2 \dd t \lesssim\int_0^T(\eps^{-2} | v_3|^2 + |g'_{\mathrm{rem, 4}}|^2)\dd t.
\]

\end{itemize}
We infer that
\[
\delta^2 \inf(\nu_h, \nu_3) \| \nabla v \|_{L^2((0,T)\times \Om}\lesssim \eps^{-1} \left(\int_0^T \int_\Om (1+|z|^2) |  g'_{\mathrm{rem}}|^2\dd x \dd y \dd z \dd t \right)^{1/2}.
\]
Choosing $N$ sufficiently large in \cref{lem:approx-high-order}, we deduce
\[
\| v\|_{H^1_{x,y}, L^2_{t,z}}\leq \| \nabla v\|_{L^2((0,T)\times \Om}\lesssim (\eps \beta)^2.
\]
Observing that $\|\eps^k u^k_{1,3}\|_{H^1_{x,y}, L^2_{t,z}}\lesssim (\eps \beta)^k $, $\|\eps^k u^k_{2}\|_{H^1_{x,y}, L^2_{t,z}} \lesssim (\eps \beta)^k (\beta/\nueff)^{1/6}$,
this completes the proof of Theorem \ref{thm:stab-periodic}.\qed

\subsection{Stability for the Cauchy problem}

The proof of Theorem \ref{thm:stab-Cauchy} goes along the same lines as the ones of \cref{thm:stab-periodic}, and is in fact slightly easier. We follow the computations of the previous paragraph and start from the energy inequality \eqref{est:energy}.
Setting
\[
E_\eps(t):= \| v_h(t)\|_{L^2(\Om)}^2 + \delta^2 \| v_3(t)\|_{L^2(\Om)}^2 + \| \varphi(t)\|_{L^2(\Om)}^2,
\]
we have
\[
\frac{d E_\eps}{dt}\lesssim E_\eps^{1/2} \| g'_{\mathrm{rem}}(t)\|_{L^2(\Om)},
\]
and thus
\[
E_\eps(t)^{1/2} \lesssim E_\eps(0)^{1/2} + \| g'_{\mathrm{rem}}\|_{L^1((0,t), L^2(\Om))}\lesssim (\eps \beta)^2(1+t)
\]
provided $N$ is chosen large enough.\qed

\section*{Acknowledgements}

The authors thank Nina Aguillon, Julie Deshayes, Sima Dogan, Stephen Griffies, Julien Guillod  and Gurvan Madec for nice discussions about this problem.
 This work was supported by the BOURGEONS project, grant  ANR-23-CE40-0014-01 of the French National Research Agency (ANR), and has benefited from a government grant managed by the Agence Nationale de la Recherche under the France 2030 investment plan ANR-23-EXMA-0003 (project Climath).
A.-L.\ D.\ acknowledges the support of the Institut Universitaire de France.

\appendix
\section{Formal derivation of \texorpdfstring{\eqref{NS-rho}}{the linearized system}}
\label{appendix-derivation}

We start from the incompressible, density dependent Navier--Stokes system with rotation. We will write the original physical variables with a tilde $\widetilde{\ }$, and the dimensionless ones without tilde.
Hence the original system is
\be\label{NSC-dimension}
\ba
\widetilde{\rho}(\widetilde{\p_t} \tu + (\tu \cdot \widetilde{\nabla} )\tu) +  \widetilde{\nabla} \widetilde{p} + 2 \widetilde{\rho}\widetilde{\Gamma} \mathbf{e}\wedge \tu - \widetilde{\nu}_h \widetilde{\Delta}_h \tu - \widetilde{\nu}_3 \widetilde{\p_3}^2 \tu &\; =&  \widetilde{ \rho}\mathbf{\widetilde{g}},\\
\widetilde{\Div} \tu  &\; =&0,\\
\widetilde{\p_t} \widetilde{ \rho} + \widetilde{\Div}( \widetilde{ \rho} \tu) &\; =&0,\\
\ea
\ee
where $\tu$ is the velocity of ocean currents, $\widetilde{ \rho} $ the seawater density, $\widetilde{\Gamma}$ the angular speed of Earth rotation and $\mathbf{e}$ the unitary vector directed from the South pole to the North pole, and $\mathbf{\widetilde{g}}$ is the gravitational acceleration.
Note that the diffusion operator stems from a classical turbulent description of small scales in oceanography: the interactions between small vortices is expected to dissipate energy, through a mechanism which is deemed as analogous to collisions between particles in an ideal gas.
Hence the coefficients $\widetilde{\nu}_h$, $\widetilde{\nu}_3$ differ from the molecular viscosity of seawater, and are called ``eddy diffusivities''. 
Since the motion of the fluid is strongly anisotropic (the motion is horizontal at main order, as recalled in \cref{sec:heuristique}), the diffusion tensor is also anisotropic, and $\widetilde{\nu}_h\gg\widetilde{\nu}_3$ \textit{a priori}.

Let us now write the system in dimensionless form. 
We consider a cartesian coordinate frame, centered around a given tempered latitude $\theta_0\in (0,\pi/2)$ measured from the equator.
As mentioned before, the vector $e_1$ is the normalised Eastward vector, $e_2$ the Northward one and $e_3$ the local vertical, so that $\mathbf{e}=\cos \theta e_1 +  \sin \theta e_3$. 
As a consequence, we have $\widetilde{y}= r_* \tan(\theta-\theta_0)$, where $r_*$ is the Earth radius abd $\theta$ is the latitude, and thus $y=\frac{r_*}{L_*} \tan(\theta-\theta_0)\simeq \frac{r_*}{L_*} (\theta-\theta_0)$, where $L_*$ is the typical horizontal length scale. 
We also denote by $D_*$ the typical depth, so that $\delta= D_*/L_*$ is the aspect ratio, and by $T_*$ the typical time scale.
As for the unknowns, we write $\tu_h= U_* u_h$, $\widetilde{p}= P_* p$, $\widetilde{\rho}= R_* \rho$, and $\tu_3= \delta U_* u_3$ in order to preserve the divergence free condition. 
The dimensionless system becomes
\be\label{NSC-dimensionless}
\ba
\rho(\p_t u +\frac{U_* T_*}{L_*} (u \cdot \nabla) u )+ \frac{P_* T_* }{R_*L_* U_*} \begin{pmatrix}
\nabla_h p\\ \delta^{-2} \p_3 p\end{pmatrix}
- \nu_h {\Delta}_h u- {\nu}_3 \p_3^2 u  & \\
\qquad
+ 2 \widetilde{\Gamma} T_* \left(\sin \theta e_3 \wedge u + 
\cos \theta \begin{pmatrix}
0\\ -\delta u_3\\ \delta^{-1} u_2
\end{pmatrix}\right)&\; =  \mathsf{g} \frac{T_*}{\delta U_*} \rho e_3,\\
{\Div} u  &\; =0,\\
\p_t \rho + \frac{U_* T_*}{L_*} \Div(\rho u) &\; =0,\\
\ea
\ee
    where
    \[
    \nu_h= \widetilde{\nu}_h \frac{T_*}{R_*L_*^2},\qquad \nu_3= \widetilde{\nu}_3 \frac{T_*}{R_*D_*^2}.
    \]

Let us now provide some orders of magnitude on the different dimensionless coefficients, and make some further assumptions:
\begin{itemize}
    \item We will work on large horizontal length scales, typically $L_*\sim 5 \cdot 10^2 \; \mathrm{km} $, while the average depth of the ocean is $D_*\sim 5 \mathrm{km}$. Thus $\delta \sim 10^{-2}$.
    \item A typical value of horizontal velocities in oceanic currents is $U_*\sim 10^{-1} \mathrm{m\cdot s^{-1}}$. We will take $R_*\sim 10^3 \mathrm{kg\cdot m^{-3}}$ and $P_*=R_* \mathsf{g} D_*$.

\item We choose $T_*$ to be a fraction of the advective time scale, namely $T_*= 10^{-1} L_*/U_*\sim 6$ days. With this choice, the Rossby number, defined as
\[
\eps:= \frac{1}{2\tilde \Gamma T_* \sin \theta_0}
\]
is such that $\eps \simeq 10^{-2}$,    while $P_* T_*/(R_*L_* U_*) = 10^{-1} \mathsf{g} D_*/U_*^2\sim 5\cdot 10^5\propto \eps^{-3}$.

    \item Linearizing the Coriolis factor around the latitude $\theta_0$, we have
    \[
    \sin \theta\simeq\sin \theta_0 + \cos \theta_0 (\theta-\theta_0)\simeq \sin \theta_0 \left( 1 + \cot \theta_0 \frac{L_*}{r_*} y\right).
    \]
    We therefore set
    \[
    \beta:= \cot \theta_0 \frac{L_*}{r_*\eps},
    \]
and we note that $\beta\sim \eps^{-1/2}$ in the present scaling, which is consistent with \eqref{hyp:parameters} and which corresponds to the parameter $\beta$ used to produce \cref{fig:illustration}.
    \item A range of  values for the eddy diffusivities may be found in \cite[Sections 9.6 and 12.6]{gill2016atmosphere}; one can take for instance
    \[
    \widetilde{\nu}_3 R_*^{-1}= 3\cdot 10^{-3}\mathrm{m^2\cdot s^{-1}},\qquad  \widetilde{\nu}_h R_*^{-1} \in [10^2, 10^4]\mathrm{m^2\cdot s^{-1}},
    \]
    leading to
    \[
    \nu_3\simeq 10^{-4},\qquad \nu_h \in [2\cdot 10^{-4}, 2 \cdot 10^{-2}].
    \]
\item Eventually, we assume that the density is a small variation around a constant value, and is stably stratified. More precisely, we take
\[
\rho= 1 + \eps \bar \rho + \eps^2 \rho',
\]
and accordingly $p= 1-z + \eps \bar p + \eps^2 p'$, with $\p_z \bar p= - \bar \rho$. 

The density equation then becomes
\[
\p_t \rho' + \frac{1}{10 \eps^2}u_3\p_3 \bar \rho + \frac{1}{10} \Div(\rho u')=0. 
\]
    
\end{itemize}
Eventually, system \eqref{NS-rho} follows (after omitting the primes in the pressure and density variations $P', \rho'$) by making the following final assumptions and approximations:
\begin{itemize}
    \item The $\cos \theta$ term is neglected, since its contribution to the vertical momentum balance is of lower order than the stratification and vertical pressure gradient, and its contribution to the horizontal momentum blanace is small (traditional approximation);

    \item The nonlinear terms $u\cdot \nabla u$ and $\Div(\rho' u)$ are discarded, and we make the usual Boussinesq approximation $\rho\p_t u \simeq \p_t u$;

    \item The stratification is assumed to be linear at main order, and such that $\p_3 \bar \rho=-10$.
\end{itemize}
Note that the above derivation leads to $\delta\sim \eps$, $\beta\sim \eps^{-1/2}$, $\nu_3\sim \eps^2$ and $\nu_h \in [2 \eps^2, 2 \eps]$, which is consistent with \eqref{hyp:parameters}.

We conclude this Appendix with a definition of weak solution for system \eqref{NS-rho}.
\begin{definition}[Weak solutions]
Let $T>0$, and let $u\in C([0,T], L^2(\Om))^3\cap L^2((0,T), H^1(\Om))^3$, $\rho\in C([0,T], L^2(\Om))$.
Assume that $\Div u=0$ almost everywhere. We say that $(u,\rho)$ is a weak solution of \eqref{NS-rho} associated with the initial data $(u_{\mathrm{ini}}, \rho_{\mathrm{ini}})$ if, for any $v\in C^1([0,T], L^2(\Om))^3\cap L^2((0,T), H^1(\Om))^3$ such that $\Div v=0$, for any $\varphi\in  C^1([0,T], L^2(\Om))$, for any $t\in (0,T)$,
\begin{eqnarray*}
&&\int_{\Om}\left(u_h(t)\cdot v_h(t) + \delta^2 u_3(t)v_3(t) + \rho(t) \varphi(t)\right) \\
&&+ \eps^{-1} \int_0^t \int_\Om (1+ \eps \beta y) e_3\wedge u \cdot v + \eps^{-1} \int_0^t \int_\Om (\rho v_3 - \varphi u_3)\\
&&+ \int_0^t \int_\Om \left[\nu_h (\nabla_h u_h : \nabla_h v_h + \delta^2 \nabla_h u_3\cdot \nabla_h v_3) + \nu_3 (\p_3 u_h\p_3 v_h + \delta^2 \p_3 u_3 \p_3 v_3)\right] \\
&=&\beta \int_0^t \int_\Om \left( f_h\cdot v_h + \delta^2 f_3\cdot v_3\right) + \int_0^t\int_\Om  \left(u_h\cdot \p_t v_h + \delta^2 u_3\p_t v_3 + \rho \p_t \varphi\right)\\
&&+ \int_{\Om}\left(u_{\mathrm{ini},h}\cdot v_{h}(t=0) + \delta^2 u_{\mathrm{ini},3}v_{3}(t=0)+ \rho_{\mathrm{ini}} \varphi(t=0)\right).
\end{eqnarray*}

\end{definition}

\bibliography{biblio-separation-maths}

\end{document}